\newtheorem{thm}{Theorem}[section]
\newtheorem{prop}[thm]{Proposition}
\newtheorem{cor}[thm]{Corollary}
\newtheorem{exa}{Example}[section]
\newtheorem{lemma}[thm]{Lemma}
\newtheorem{rem}{Remark}[section] 
\newcommand{\rationals}{\mathbb{Q}}
\author{Guillaume Chapuy
and  Wenjie Fang
\thanks{ 
Both authors acknowledge partial  support from \emph{Agence Nationale de la Recherche}, grant number ANR~12-JS02-001-01 ``Cartaplus'', and from the City of Paris, grant ``\'Emergences Paris 2013, Combinatoire \`a Paris''.
G.C. is currently visiting McGill University, School of Computer Science, Montr\'eal, Canada, and acknowledges partial support from Luc Devroye.
W.F. acknowledges hospitality from LaBRI, UMR CNRS 5800, Universit\'e Bordeaux-I, France.
Emails:~{\tt \{Guillaume.Chapuy,Wenjie.Fang\}@liafa.univ-paris-diderot.fr}.
}\\
{\small LIAFA, UMR CNRS 7089, 
  Universit\'e Paris-Diderot, Paris Cedex 13, France.}
}
\title{Generating functions of bipartite maps on orientable surfaces}
\begin{document}
\maketitle
\begin{abstract}
We compute, for each genus $g\geq 0$, the generating function $L_g\equiv L_g(t;p_1,p_2,\dots)$ of (labelled) bipartite maps on the orientable surface of genus $g$, with control on all face degrees. We exhibit an explicit change of variables such that for each $g$, $L_g$ is a rational function in the new variables, computable by an explicit recursion on the genus. The same holds for the generating function $F_g$ of \emph{rooted} bipartite maps. The form of the result is strikingly similar to the Goulden/Jackson/Vakil and Goulden/Guay-Paquet/Novak formulas for the generating functions of classical and monotone Hurwitz numbers respectively, which suggests stronger links between these models. Our result  complements recent results of Kazarian and Zograf, who studied the case where the number of faces is bounded, in the equivalent formalism of \textit{dessins d'enfants}. Our proofs borrow some ideas from Eynard's ``topological recursion'' that he applied in particular to even-faced maps (unconventionally called ``bipartite maps'' in his work). However, the present paper requires no previous knowledge of this topic and comes with elementary (complex-analysis-free) proofs written in the perspective of formal power series.
\end{abstract}


\section{Introduction}
\label{sec:in}

A \emph{map of genus $g\geq0$} is a graph embedded into the $g$-torus (the sphere with $g$ handles attached), in such a way that the connected components the complement of the graph are simply connected. See~\Cref{sec:defs} for complete definitions. The enumeration of maps is a classical topic in combinatorics, motivated both from the beautiful enumerative questions it unveils, and by its many connections with other areas of mathematics, see \textit{e.g.}~\cite{LandoZvonkine}.
The enumeration of \emph{planar} maps (when the underlying surface is the sphere) was initiated by Tutte who showed~\cite{Tutte:census} that the generating function $Q_0(t)$ of rooted planar maps by the number of edges is an algebraic function given by:
\begin{align}\label{eq:introTutte}
Q_0(t) = s(4-s)/3 \mbox{ where } s=1+3ts^2.
\end{align}
The enumeration of planar maps has since grown into an enormous field of research on its own, out of the scope of this introduction, and we refer to~\cite{Schaeffer:survey} for an introduction and references.

The enumeration of maps on surfaces different from the sphere was pioneered by Bender and Canfield, who showed~\cite{BC} that for each $g\geq 1$, the generating function $Q_g(t)$ of rooted maps embedded on the $g$-torus (see again \Cref{sec:defs} for definitions) is a \emph{rational} function of the parameter $s$ defined in~\eqref{eq:introTutte}. For example, for the torus, one has
$ 
Q_1(t) = 
\frac{1}{3}{\frac {s \left( s-1 \right) ^{2}}{ \left( s+2 \right)  \left( s-
2 \right) ^{2}}}. $
This deep and important result was the first of a series of rationality results established for generating functions of maps or related combinatorial objects on higher genus surfaces. Gao~\cite{Gao} proved several rationality results for the generating functions of maps with prescribed degrees using a variant of the kernel method (see~\Cref{rem:oldKernel} for a comment about this). Later, Goulden, 
Jackson and Vakil~\cite{GJV} proved a rationality statement for the generating functions of Hurwitz numbers (an algebraic model having many connections with map enumeration) relying on deep algebraic results~\cite{ELSV}. 
More recently, Goulden, Guay-Paquet, and Novak~\cite{GGPN} introduced a variant called \emph{monotone} Hurwitz numbers, for which they proved a rationality statement very similar to the one of~\cite{GJV}.
We invite the reader to compare our main result (\Cref{thm:mainUnrooted}) with \cite[Thm. 3.2]{GJV} and \cite[Thm. 1.4]{GGPN} (see also~\cite[Sec.~1.5]{GGPN}). The analogy between those results is striking and worth further investigation.

In parallel to this story, mathematical physicists have developed considerable tools to attack problems of map enumeration, motivated by their many connections with high energy physics, and notably matrix integrals (see e.g. \cite[Chapter 5]{LandoZvonkine}).
Among them, the \emph{topological recursion} is a general framework developed by Eynard and his school~\cite{Eynard:book, EO}, that gives, in a universal way, the solution to many models related to map enumeration and algebraic geometry, see \cite{EO}. In his book~\cite[Chap. 3]{Eynard:book}, Eynard applies this technique to the enumeration of maps on surfaces, and obtains in particular a rationality theorem for generating functions of \emph{even maps}, i.e., maps with faces of even degrees (that he, unconventionally, calls ``bipartite'' maps, although the two models are different). The proofs in these references use a complex-analytic viewpoint, and are often not easy to read for the pure combinatorialist, especially given the fact that they are published in the mathematical physics literature. 

The main purpose of the present paper is to establish a rationality theorem for \emph{bipartite} maps, which is a very natural and widely considered model of maps from both the topological and combinatorial viewpoint, see \Cref{sec:defs}.  Our proof recycles two ideas of the topological recursion,
however previous knowledge of the latter is not required, and our proofs rely only on a concrete viewpoint on Tutte equations and on \emph{formal} power series. We hope to make some of the key ideas of the topological recursion more accessible to pure combinatorialists, using a language that enables an easier comparison with the traditional combinatorial approaches. To be precise, the two crucial steps that are directly inspired from the topological recursion, and that differ from traditional kernel-like methods often used by combinatorialists are \Cref{prop:noPoles!} and \Cref{thm:toprec}. Once these two results are proved (with a formal series viewpoint), an important part of the work deals then with making explicit the auxiliary variables that underlie the rationality statements (the ``Greek'' variables in \Cref{thm:mainUnrooted} below). This is done in Sections~\ref{sec:Gamma} and~\ref{sec:Y}. Finally, the proof of the ``integration step'' needed to prove our statement in the labelled case (\Cref{thm:mainUnrooted}) from the rooted case (\Cref{thm:mainRooted}) is an \textit{ad hoc} proof, partly relying on a bijective insight from~\cite{Chapuy:constellations}, see \Cref{sec:unrooting}. This approach has the advantage of giving a partial combinatorial interpretation to the absence of logarithm in generating functions of unrooted maps in genus higher than 1 (Theorem~\ref{thm:mainUnrooted}).

Bipartite maps have been considered before in the literature. The first author studied them by \emph{bijective} methods~\cite{Chapuy:constellations}, and obtained rationality statements that are weaker than the ones we obtain with generating functions here. More recently, Kazarian and Zograf~\cite{KZ}, using a variant of the topological recursion, proved a \emph{polynomiality} statement for the generating functions of bipartite maps with \emph{finitely many} faces (these authors deal with \emph{dessins d'enfants} rather than bipartite maps, but the two models are equivalent, see \cite[Chap. 1]{LandoZvonkine}). On the contrary our main result covers the case of arbitrarily many faces, which is  more general. Indeed, not only does it prove that each fixed-face generating functions is a polynomial in our chosen set of parameters (by a simple derivation), but it also gives a very strong information on the mutual dependency of these different generating functions. Note however that \cite{KZ} keeps track of one more variable (keeping control on the number of vertices of each color in their expressions). It is probably possible to unify the two results together.
 
To finish this introduction,  and to prevent a misunderstanding, 
we mention that the generating functions of bipartite maps of all genera can be collected into a grand generating function that is known to be a Tau-function of the KP (and even 2-Toda) hierarchy, see e.g. \cite{GJ:KP}. This fact does not play any role in the present paper, and we do not know how to use it to study the kind of questions we are interested in here. However, if this was possible, this could lead to recurrence formulas to compute the generating functions that would be more efficient than the ones we obtain here, as was done so far only in the two very special cases of triangulations~\cite{GJ:KP} and bipartite quadrangulations~\cite{CC}.

This paper is organized as follows. In \Cref{sec:defsMain}, we give necessary definitions and notation, and we state the main results (Theorems~\ref{thm:mainUnrooted} and~\ref{thm:mainRooted}). In \Cref{sec:Tutte} we write the Tutte/loop equation, and we admit a list of propositions and lemmas, without proof, that enable us to prove \Cref{thm:mainRooted}. The proofs of these admitted propositions and lemmas are fully given in \Cref{sec:Gamma} and \Cref{sec:Y}. Finally, \Cref{sec:unrooting} gives the proof of \Cref{thm:mainUnrooted},
and \Cref{sec:comments} collects some final comments.

\noindent{\bf Acknowledgements:} Both authors thank Mireille Bousquet-M\'elou for interesting discussions
and comments.

\section{Surfaces, maps, and the main results}
\label{sec:defsMain}
 
\subsection{Surfaces, maps.}\label{sec:defs}
In this paper, a \emph{surface} is a connected, compact, oriented 2-manifold without boundary, considered up to oriented homeomorphism. For each integer $g\geq 0$, we let $\mathcal{S}_g$ be \emph{$g$-torus}, that is obtained from the $2$-sphere $\mathcal{S}_0$ by adding $g$ handles. Hence $\mathcal{S}_1$ is the torus, $\mathcal{S}_2$ is the double torus, etc. By the theorem of classification, each surface is homeomorphic to one of the surfaces $\mathcal{S}_g$ for some $g\geq 0$ called its \emph{genus}.

A \emph{map} is a graph $G$ (with loops and multiple edges allowed) properly embedded into a surface $\mathbb{S}$, in such a way that the connected components of $\mathbb{S} \setminus G$, called \emph{faces}, are topological disks. The \emph{genus} of a map is the genus of the underlying surface. A map is \emph{bipartite} if vertices of its underlying graph are coloured in black and white such that there is no monochromatic edge. Note that a bipartite map may have multiple edges but no loops.
A map is \emph{rooted} if an edge (called the \emph{root edge}) is distinguished and oriented. The origin of the root edge is the \emph{root vertex}, and the face incident to the right of the (oriented) root edge is the \emph{root face}. By convention the root vertex of a bipartite map is always coloured white.
We consider rooted maps up to oriented homeomorphisms preserving the root edge and its orientation.
The \emph{degree} of a vertex in a bipartite map is its degree in the underlying multigraph, i.e. the number of edges incident to it, with multiplicity. The \emph{degree} of a face in a bipartite map is the number of edges bounding this face, counted with multiplicity. Because colors alternate along an edge, the degree of faces in a bipartite map are all even numbers
\footnote{Note, however that the converse is true only for genus 0: for each genus $g\geq 1$, there exist maps with all faces of even degree but not bipartite (in genus $1$, and example is the $m\times n$ square grid with toroidal identifications, when $m$ or $n$ is odd).}.
If a bipartite map has $n$ edges, the sum of all face-degrees is equal to $2n$,
 and the sum of all vertex-degrees of each given color is equal to $n$.

From the algebraic viewpoint (and for the comparison with Hurwitz numbers as defined in~\cite{GJV,GGPN}), it is sometimes convenient to consider a variant of rooted maps called labelled maps. A \emph{labelled} bipartite map of size $n$ is a bipartite map with $n$ edges equipped with a labelling of its edges from $1$ to $n$ such that its root edge receives label~$1$. There is a $1$-to-$(n-1)!$ correspondence between rooted bipartite maps and labelled bipartite maps of size $n$. Given a labelled bipartite map, one can define two permutations $\sigma_\circ$ and $\sigma_\bullet$ in $\mathfrak{S}_n$ whose cycles record the counter-clockwise ordering of edges around white and black vertices, respectively. 
See Figure~\ref{fig:bipmap}.
This is a bijection between labelled bipartite maps of size $n$ and pairs $(\sigma_\circ, \sigma_\bullet)$ of permutations in $\mathfrak{S}_n$ such that the subgroup $\langle \sigma_\circ, \sigma_\bullet \rangle\subset\mathfrak{S}_n$ acts transitively on $[1\dots n]$. In this correspondence, cycles of $\sigma_\circ, \sigma_\bullet$, and $\sigma_\circ\sigma_\bullet$ are in natural correspondence with white vertices, black vertices, and faces, and the lengths of these cycles correspond to degrees (for vertices) and half-degrees (for faces). The genus $g$ of the underlying surface is related to the number of cycles of the three permutations $\sigma_\circ$, $\sigma_\bullet$ and $\sigma_\circ\sigma_\bullet$ by Euler's formula:
\begin{align*}
\ell(\sigma_\circ)+\ell(\sigma_\bullet)+\ell(\sigma_\circ\sigma_\bullet)
=n+2-2g.
\end{align*}
\begin{figure}\centering
\includegraphics{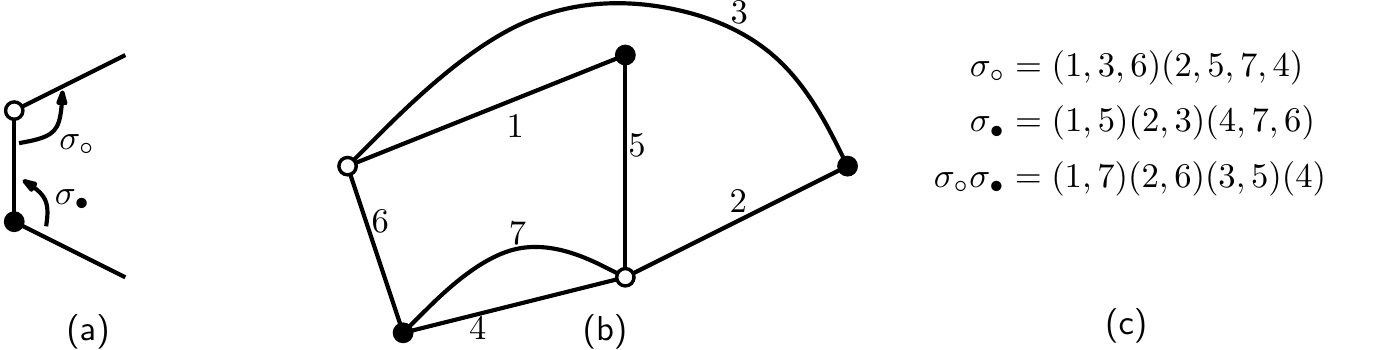}
\caption{A labelled bipartite map with 7 edges in the plane, and the corresponding permutations $\sigma_\circ, \sigma_\bullet, \sigma_\circ\sigma_\bullet$.}\label{fig:bipmap}

\end{figure}

\subsection{Notation for series and changes of variables}
\label{sec:notation}
In this paper,  $t$, $x$, and $p_1,p_2, \dots$ are indeterminates.  Indices of the variables $(p_i)_{i\geq 1}$ will be extended multiplicatively from integers to integer partitions, for example $p_{3,3,1}=p_1 (p_3)^2$, and the same convention will be used for other indexed sequences of variables in the paper, such as $(\eta_i)_{i\geq 1}$ or $(\zeta_i)_{i\geq 1}$.

If $\mathbb{B}$ is a ring (or field) and $s$ an indeterminate, we denote by $\mathbb{B}[s]$, $\mathbb{B}(s)$, $\mathbb{B}[[s]]$, $\mathbb{B}((s))$, $\mathbb{B}((s^*))$ the ring (or field) of polynomials, rational functions, formal power series (f.p.s.), formal Laurent series, and Puiseux series in $s$ with coefficients in $\mathbb{B}$, respectively. If $\mathbb{B}$ is a field, $\overline{\mathbb{B}}$ is its algebraic closure.
We will often omit the dependency of generating functions on the variables
in the notation, for example we will write $L_g$ for $L_g(t; p_1,p_2,\dots)$ and $F_g$ for $F_g(t;x;p_1,p_2,\dots)$.
In this paper all fields have characteristic~$0$.

Finally, an important role  will be played by the ``change of variables'' $(t,x)\leftrightarrow (z,u)$ given by the equations:
\begin{align}
\label{eq:z}
\displaystyle
z=t\left(1+\sum_{k\geq 1} {2k-1 \choose k} p_k z^k \right),
\end{align}
\begin{align}\label{eq:u}
u=x(1+zu)^2.
\end{align}
These equations define two unique f.p.s. $z\equiv z(t) \in \mathbb{Q}[p_1,p_2,\dots][[t]]$ and $u\equiv u(t,x)\in\mathbb{Q}[x, p_1,p_2,\dots][[t]]$. Moreover, this change of variables is reversible, via $t(z)=\frac{z}{1+\sum_k {2k-1 \choose k} p_k z^k}$ and $x(z,u)=\frac{u}{(1+zu)^2}$.  Note also that, if $H\equiv H(t,x)\in \mathbb{B}[x][[t]]$ is a f.p.s. in $t$ with polynomial coefficients in $x$ over some ring $\mathbb{B}$ containing all $p_i$, then $H(t(z),x(z,u))$ is an element of $\mathbb{B}[u][[z]]$. In this paper \emph{we are going to abuse notation} and we will switch without warning between a series $H\in \mathbb{B}[x][[t]]$ and its image in $\mathbb{B}[u][[z]]$ via the change of variables. We are going to use the single letter $H$ for both objects, relying on the context that should prevent any misunderstanding.

\subsection{Generating functions and the main result}

For $n \geq1$ and $\mu$ a partition of $n$ (denoted as $\mu \vdash n$), let $\mathfrak{l}_g(\mu)$ be the number of labelled bipartite maps of size $n$ and genus $g\geq 0$ whose half-face degrees are given by the parts of $\mu$. Equivalently:
$$
\mathfrak{l}_g(\mu) :=
\#\left\{
\begin{array}{c}
(\sigma_\circ,\sigma_\bullet) \in (\mathfrak{S}_n)^2 ~ ~;~ ~ \ell(\sigma_\circ)+\ell(\sigma_\bullet)+\ell(\sigma_\circ\sigma_\bullet) = n+2-2g ~ ~;\\ \langle\sigma_\circ,\sigma_\bullet\rangle \mbox{ is transitive} ~ ~ ; ~ ~  \sigma_\circ\sigma_\bullet \mbox{ has cycle type } \mu. 
\end{array}
\right\}.
$$
We now form the \emph{exponential} generating function of these numbers, where $t$ marks the number of edges and for $i\geq 1$, the variable $p_i$ marks the number of faces of degree $2i$:
\begin{align*}
L_g\equiv L_g(t;p_1,p_2,\dots):=
\mathbf{1}_{g=0} +
 \sum_{n\geq 1}\frac{t^n}{n!}
\sum_{\mu\vdash n} \mathfrak{l}_g(\mu) p_\mu,
\end{align*}
where the indicator function accounts for the unique map of genus $0$ with $1$ vertex and $0$ edge, that we allow by convention.
Similarly, for $n,k\geq 1$ and $\mu\vdash n-k$, we let $\mathfrak{b}_g(k;\mu)$ be the number of \emph{rooted} bipartite maps of genus $g$ with $n$ edges, such that the root face has half-degree $k$, and the half-degrees of non-root faces are given by the parts of $\mu$. We let $F_g\equiv F_g(t;x;p_1,p_2,\dots)$ be the corresponding \emph{ordinary} generating function:
\begin{align*}
F_g \equiv F_g(t;x;p_1,p_2,\dots) :=
\mathbf{1}_{g=0} +
 \sum_{n \geq 1} t^n \sum_{k\geq 1 \atop \mu\vdash n-k}
\mathfrak{b}_{g}(k,\mu) x^k p_\mu.
\end{align*}

Our first main result is the following theorem:
\begin{thm}[Main result -- unrooted case ($g \geq 2$)]\label{thm:mainUnrooted}
Let $z\equiv z(t)$ be the unique formal power series defined by~\eqref{eq:z}.
Moreover, define the ``variables'' $\eta$ and $\zeta$ as the following formal power series:
\begin{align*}
\eta=\sum_{k\geq 1} (k-1) {2k-1 \choose k} p_k z^k,
\quad
\zeta=\sum_{k\geq 1} \frac{k-1}{2k-1} {2k-1 \choose k} p_k z^k,
\end{align*}
and the variables $(\eta_i)_{i\geq 1}$ and $(\zeta_i)_{i\geq 1}$ by
\begin{align*}
\eta_i:=\sum_{k\geq 1} (k-1) k^i {2k-1 \choose k} p_k z^k ,
\quad
\zeta_i=\sum_{k\geq 1} \frac{(-2)^{i+1} k(k-1) \cdots (k-i)}{(2k-1)(2k-3) \cdots (2k-2i-1)}{2k-1 \choose k} p_k z^k .
\end{align*}
Then for $g \geq 2$, the exponential generating function $L_g$ of labelled bipartite maps of genus $g$ is given by a finite sum:
\begin{align}\label{eq:mainUnrooted}
L_g &= \sum_{\alpha, \beta, a, b} c_{a,b}^{\alpha, \beta} \frac{\eta_\alpha \zeta_\beta}{(1-\eta)^a (1+\zeta)^b}, 
\end{align}
for rational numbers $c_{a,b}^{\alpha, \beta}$, where the (finite) sum is taken over integer partitions $\alpha, \beta$ and nonnegative integers $a,b,$ such that 
$|\alpha| + |\beta| \leq 3(g-1)$ and $a+b = \ell(\alpha)+\ell(\beta)+2g-2$.
\end{thm}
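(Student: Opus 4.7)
The plan is to deduce \Cref{thm:mainUnrooted} from the rooted analogue \Cref{thm:mainRooted} via an unrooting/integration step. The starting point is the bijective observation that a labelled bipartite map of size $n$ is the same data as a rooted bipartite map of size $n$ together with a labelling of its $n-1$ non-root edges by $\{2,\dots,n\}$; summing over the half-degree of the root face then yields the identity
\begin{align*}
t\,\partial_t L_g \;=\; F_g(t;x;p_1,p_2,\dots)\,\big|_{x^k \mapsto p_k,\ k\geq 1}\qquad (g\geq 2).
\end{align*}
Since $L_g$ has vanishing constant term in $t$ for $g\geq 2$, this identity determines $L_g$ unambiguously as the formal antiderivative $\int dt/t$ of the right-hand side. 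The problem thus splits into two sub-tasks: (i) expressing $F_g|_{x^k \mapsto p_k}$ in the claimed Greek-variable form, and (ii) showing that its antiderivative retains this form.

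For step (i), I would work in the coordinates $(z,u)$ of \Cref{sec:notation}. By \Cref{thm:mainRooted} the rooted series $F_g$ is an explicit rational function of $z$, $u$, and of the Greek variables $\eta,\zeta,\eta_i,\zeta_i$. The substitution $x^k \mapsto p_k$ can be implemented via $x = u/(1+zu)^2$: it corresponds to applying the operator $\sum_{k\geq 1} p_k\,(\cdot)^k$ to the $x$-expansion of $F_g$, i.e., to a residue extraction in $u$ with an explicit rational weight coming from the Jacobian of the change of variables. The defining formulas of the $\eta_i$ and $\zeta_i$ are tailored exactly for this: the factors $\binom{2k-1}{k}$, $k^i$, and $k(k-1)\cdots(k-i)/((2k-1)(2k-3)\cdots(2k-2i-1))$ are what the residue in $u$ produces on the relevant $u$-powers appearing in $F_g$. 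Consequently $F_g|_{x^k\mapsto p_k}$ is already a finite $\mathbb{Q}$-linear combination of the target monomials $\eta_\alpha\zeta_\beta/((1-\eta)^a(1+\zeta)^b)$, times the explicit rational prefactor relating $dt/t$ to the natural $z$-derivative.

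The crux, and the main obstacle, is step (ii): performing $\int dt/t$ in the Greek variables without producing logarithmic terms. Via \eqref{eq:z} the one-form $dt/t$ rewrites in the $z$-variable as a rational one-form with only $\eta$-type factors, so the integrand stays in the algebra generated by $\eta,\zeta,\eta_i,\zeta_i,1/(1-\eta),1/(1+\zeta)$; the task is to check that its antiderivative also lies in this algebra, i.e., that certain residues vanish identically as polynomials in the $p_k$. This vanishing cannot be deduced from the rational form of $F_g$ alone. The paper closes the gap with an \emph{ad hoc} argument borrowing a bijective rerooting insight from \cite{Chapuy:constellations} on bipartite maps (equivalently, $2$-constellations): the combinatorial interpretation provides an \emph{a priori} reason why the passage from rooted to unrooted counts is log-free for $g\geq 2$. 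With this no-log property in hand, the degree bounds $|\alpha|+|\beta|\leq 3(g-1)$ and $a+b=\ell(\alpha)+\ell(\beta)+2g-2$ follow by tracking the Greek-variable weight of $F_g$ through the rooted topological recursion (\Cref{thm:toprec}) and noting that the substitution $x^k\mapsto p_k$ and the subsequent integration shift these weights in the prescribed way.
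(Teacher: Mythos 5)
Your high-level plan --- pass from $F_g$ to $L_g$ by inverting a rooting operator, and rule out logarithms by a combinatorial argument drawing on \cite{Chapuy:constellations} --- is the right one, and your starting identity $t\,\partial_t L_g = \Pi F_g$ (where $\Pi\colon x^k\mapsto p_k$) is correct; the paper uses the same identity in the guise of $L_g^{\mathrm{edge}}=\Pi F_g$. The gap is in the inversion step. You propose to undo $t\partial_t$ by a literal $\int dt/t$, but this is a genuine formal-power-series integral in $t$ (equivalently in $z$): the Greek variables are infinite series in $z$, so $\int R(\eta,\zeta,\eta_i,\zeta_i)\,dz/z$ is not a partial-fraction computation, and you give no mechanism for why its antiderivative should again be rational, up to logarithms, in the Greek variables. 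Indeed $t\partial_t$ acts on Greek variables by $t\partial_t G = \frac{1+\gamma}{1-\eta}DG$, where $D$ shifts indices ($D\eta=\eta_1$, $D\eta_i=\eta_{i+1}$, etc.), so it is not a simple degree operator. The paper sidesteps this exactly by introducing the operators $\Box$ and $\Diamond$ and proving $\Diamond L_g = \Box F_g$ (\Cref{prop:decomposeGamma}): since $\Diamond = \sum_k p_k\partial_{p_k}$, with $\partial_{p_k}$ taken at fixed $z$, \emph{fixes} each Greek variable, it is the Euler operator on the algebra generated by the Greek variables, multiplying each degree-$d$ monomial by $d$, and is inverted by the single-variable rescaling integral $\int_0^1 R(v\eta, v\gamma, v\zeta,\dots)\,dv$. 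This is what makes the form $L_g = R_1 + R_2\log(1-\eta) + R_3\log(1+\zeta)$ of \Cref{cor:LgAlmost} transparently computable. Your choice of $\Pi\Gamma = t\partial_t$ instead of $\Box\Gamma=\Diamond$ is the missing technical idea.

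Second, your account of the no-log step is too compressed to bear the weight it needs. It is not a single ``bijective rerooting insight'': the paper layers the disymmetry theorem $(2-2g)L_g = L_g^{\mathrm{vertex}}+L_g^{\mathrm{face}}-L_g^{\mathrm{edge}}$ (\Cref{lemma:disymmetry}, an Euler-formula identity), the rationality of $L_g^{\mathrm{face}}=\Xi F_g$ and $L_g^{\mathrm{edge}}=\Pi F_g$ in the Greek variables (\Cref{lemma:faceedge}), the algebraicity of $L_g^{\mathrm{vertex}}$ under each finite specialization of the $p_i$ (\Cref{prop:bijectionImport}, imported from \cite{Chapuy:constellations}), and an algebraic-independence argument to deduce that the rational coefficients $R_2, R_3$ of the logarithms vanish identically rather than merely under specializations. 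Likewise, the degree constraints $|\alpha|+|\beta|\le 3(g-1)$ and $a+b=\ell(\alpha)+\ell(\beta)+2g-2$ require a dedicated case analysis (the final paragraphs of the paper's proof) controlling how $\Gamma$ shifts the Greek degree and the two pole degrees via \Cref{prop:Gamma-degrees} and ruling out cancellations; they do not simply ``follow by tracking weights''.
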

\begin{exa}[unrooted generating function for genus $2$]\label{ex:unrootedGenus2}
\begin{multline*}
\footnotesize
L_2=
{\frac {1}{120}}
-{\frac {1}{23040}}{\frac {\eta_{{1}} \left( 185
\eta_{{1}}-58\eta_{{2}} \right) }{ \left( 1-\eta \right) ^{4}
}}
-{\frac {1}{46080}}{\frac {20\eta_{{3}}-168\eta_{{2}}+415
\eta_{{1}}}{ \left(1 - \eta \right) ^{3}}}
-\frac{53/15360}{(1-\eta)^2}
\\
-{\frac {7}{2880}}{\frac {{\eta_{
{1}}}^{3}}{ \left(1- \eta \right) ^{5}}}
-{\frac {1/512}{ \left(
1-\eta \right)  \left( 1+\zeta  \right) }}
+{\frac {
\eta_{{1}}/1536}{ \left( 1-\eta \right) ^{2} \left( 1+
\zeta  \right) }}
-{\frac {3}{1024}} \frac{1}{(1+\zeta )^{2}}
+{\frac {3}{8192}}{\frac {\zeta _{1}}{
 \left( 1+\zeta  \right) ^{3}}}.
\end{multline*}
\end{exa}
The case of genus $1$ is stated separately since it involves logarithms:
\begin{thm}[Unrooted case for genus $1$]\label{thm:unrootedGenus1}
The exponential generating function $L_1\equiv L_1(t; p_1,p_2,\dots)$ of bipartite maps on the torus is given by the following expression, with the notation of~\Cref{thm:mainRooted}:
\begin{align*}
L_1 = 
\frac{1}{24} \ln \frac{1}{1-\eta} + 
\frac{1}{8}  \ln \frac{1}{1+\zeta}.
\end{align*}
\end{thm}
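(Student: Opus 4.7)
The plan is to derive Theorem \ref{thm:unrootedGenus1} from the $g = 1$ case of the rooted formula (Theorem \ref{thm:mainRooted}), using the same integration procedure carried out in Section \ref{sec:unrooting} for $g \geq 2$; the logarithmic primitives that show up are specific to genus $1$.

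The first step is a combinatorial identity linking rooted and labelled generating functions:
\begin{equation*}
t \, \frac{\partial L_g}{\partial t} \;=\; \sum_{k \geq 1} p_k \, [x^k] F_g, \qquad g \geq 0.
\end{equation*}
It is obtained by observing that an unrooted bipartite map $M$ with $n$ edges has exactly $n$ rootings (one per edge, the orientation being determined by the bipartite convention), and that summing the marker $p_k$ of the root-face half-degree across those $n$ rootings contributes $n\cdot p_{d(M)}$, because $\sum_{f} d(f)/2 = n$. After dividing by $|\mathrm{Aut}(M)|$ and collecting maps of the same size, the identity follows.

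The second step is to evaluate the right-hand side for $g = 1$. Using the explicit rational shape of $F_1$ provided by Theorem \ref{thm:mainRooted} and the change of variables $u = x(1+zu)^2$ from \eqref{eq:u}, the coefficient extraction $[x^k]$ becomes a Lagrange-style residue in $u$; the weighted sum $\sum_{k\geq 1} p_k [x^k]F_1$ can then be computed in closed form as a rational expression in $z$ and the variables $\eta_i,\zeta_i$. Using $\partial_t z = R^2/(1-\eta)$ with $R = z/t$ as in \eqref{eq:z}, one then checks that this rational expression equals $t\,\partial_t$ of the claimed formula
\begin{equation*}
\tfrac{1}{24}\log\tfrac{1}{1-\eta} \;+\; \tfrac{1}{8}\log\tfrac{1}{1+\zeta}.
\end{equation*}
The two logarithms arise because, among the pieces into which the rational function decomposes, a single one of the form $c_1 (\partial_t \eta)/(1-\eta)$ and a single one of the form $c_2 (\partial_t \zeta)/(1+\zeta)$ are present, with $c_1 = 1/24$ and $c_2 = -1/8$. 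All remaining pieces must combine into the $t$-derivative of a rational function vanishing at $t = 0$; this "absence of rational part" is the same phenomenon that produces a purely rational $L_g$ for $g \geq 2$ in Section \ref{sec:unrooting}, and the combinatorial/bijective insight from \cite{Chapuy:constellations} used there applies equally well to the non-logarithmic part of the genus $1$ integrand.

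The main obstacle is the residue computation together with the bookkeeping of the $1/(1-\eta)$ and $1/(1+\zeta)$ factors, in particular proving that only the two expected logarithmic primitives survive with the coefficients $1/24$ and $1/8$; the rest of the integrand must be recognised as an exact rational $t$-derivative. Matching initial conditions is trivial, as both sides vanish at $t = 0$.
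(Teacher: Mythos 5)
Your identity $t\,\partial_t L_g = \sum_k p_k [x^k]F_g$ is correct, and can be seen as a repackaging of the paper's $L_g^{\text{edge}} = \Pi F_g$ (Section~\ref{sec:unrooting2}) combined with $L_g^{\text{edge}} = t\,\partial_t L_g$, which holds because marking one of the $n$ labelled edges multiplies by $n$. (Your justification via $|\mathrm{Aut}(M)|$ is a red herring: $L_g$ is an exponential generating function for labelled objects, so automorphisms play no role.) This gives a route to $L_1$ that is genuinely simpler than the paper's $\Diamond L_g = \Box F_g$ from \Cref{prop:decomposeGamma}, since $\Pi$ is a plain coefficient substitution whereas $\Box$ carries the extra factor $\tfrac{1}{k} - \tfrac{\gamma}{1+\gamma}$. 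If you compute $\Pi F_1$ explicitly and check that it equals $t\,\partial_t$ of the claimed formula, you are done, because both sides vanish at $t=0$ and $t\,\partial_t$ is injective on formal power series without constant term.

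There are, however, two concrete gaps. First, the residue/verification step requires an \emph{explicit} expression for $F_1$, not just the structural shape asserted in \Cref{thm:mainRooted}; the paper obtains it by running the topological recursion \Cref{thm:toprec} at $g=1$ with $F_0^{(2)}$ from \eqref{eq:F02}. You simply assume this is available. Second, your final two paragraphs invoke the ``absence of rational part'' mechanism of \Cref{sec:unrooting2} and the algebraicity result from \cite{Chapuy:constellations}. That argument shows $L_g$ is algebraic for $g\ge 2$ and therefore log-free; it \emph{cannot} be applied for $g=1$, where the claimed answer contains logarithms and $L_1$ is not algebraic. Nor is any such argument needed if you take the verification route: comparing two explicit rational expressions in the Greek variables requires no structural input. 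If instead you want a genuine integration-based derivation, you would have to show that the non-logarithmic pieces of $\Pi F_1$ integrate to exactly zero, and this does not follow from any of the paper's general lemmas — it is precisely the output of the explicit computation, which is what the paper's proof carries out.
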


In order to establish \Cref{thm:mainUnrooted} we will first prove its (slightly weaker) rooted counterpart:
\begin{thm}[Main result -- rooted case]\label{thm:mainRooted}
Let $z\equiv z(t)$ and $u=u(x,t)$ be defined by~\eqref{eq:z}--\eqref{eq:u}, and let the variables $\eta$, $\zeta$ and
$(\eta_i)_{i\geq 1}$ and $(\zeta_i)_{i\geq 1}$ be defined as in \Cref{thm:mainRooted}.
Then for all $g\geq1$, the generating function $F_g\equiv F_g(t;x;p_1,p_2,\dots)$ of rooted bipartite maps of genus~$g$ is equal to
\begin{align}\label{eq:mainRooted}
F_g 
 &= \sum_{c=1}^{6g-1} \sum_{\alpha, \beta, a \geq 0, b \geq 0} \frac{\eta_\alpha \zeta_\beta}{(1-\eta)^a (1+\zeta)^b} \left( \frac{d_{a,b,c,+}^{\alpha, \beta}}{(1-uz)^c} + \frac{d_{a,b,c,-}^{\alpha, \beta}}{(1+uz)^c} \right) , 
\end{align}
for $d_{a,b,c,\pm}^{\alpha, \beta} \in \rationals$, with the same notation as in~\Cref{thm:mainUnrooted}. Furthermore, $d_{a,b,c,\pm}^{\alpha, \beta} \neq 0$ implies $(2 \pm 1)g \geq \lceil \frac{1+c}{2} \rceil + |\alpha| + |\beta|$ and $a+b = \ell(\alpha)+\ell(\beta)+2g-1$ for the two signs, and the sum above is finite.
\end{thm}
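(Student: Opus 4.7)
The plan is to prove \Cref{thm:mainRooted} by induction on $g$, starting from the Tutte (loop) equation for $F_g$. This equation, obtained by deletion of the root edge of a rooted bipartite map, is quadratic in $F_g(x)$ with a kernel involving only $F_0$ and $x$, and with an inhomogeneous part comprising the ``handle'' term coming from $F_{g-1}$ and the convolution $\sum_{g_1+g_2=g,\; g_i\geq 1} F_{g_1} F_{g_2}$ over strictly smaller genera. A direct computation gives the Jacobian $\partial u/\partial x = (1+zu)^3/(1-zu)$, so the change of variables~\eqref{eq:z}--\eqref{eq:u} is a biholomorphism away from the two loci $uz = \pm 1$; these are precisely the branch points of the two-sheeted spectral curve underlying the topological recursion for this model, and they are responsible for the denominators $(1-uz)^c$ and $(1+uz)^c$ appearing in~\eqref{eq:mainRooted}.

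The induction then proceeds via the two tools highlighted in the introduction. First, \Cref{prop:noPoles!} asserts that $F_g$, viewed as a function of $u$, is regular away from $uz = \pm 1$, and hence is a rational function whose only $u$-denominators are of the form $(1\pm uz)^c$. Second, \Cref{thm:toprec} reconstructs $F_g$ from the polar parts of $F_{g'}$ ($g'<g$) at the two branch points via a residue/integration operation against a local uniformizer. Feeding the induction hypothesis into this reconstruction yields a rational expression of the desired shape; the identification of its coefficients with polynomials in the Greek variables $\eta,\zeta,(\eta_i),(\zeta_i)$ then follows from the explicit residue computations of \Cref{sec:Gamma} and \Cref{sec:Y}. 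These Greek variables arise naturally as the values of sums of the form $\sum_k (\text{polynomial in } k)\binom{2k-1}{k}p_k z^k$ that appear when the kernel and its derivatives are evaluated at the two branch points $uz = \pm 1$: the combinations $1-\eta$ and $1+\zeta$ are the leading values at $uz=1$ and $uz=-1$ respectively, while $\eta_i,\zeta_i$ come from the higher-order terms of the Taylor expansions.

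The technical bookkeeping — finiteness of the sum, the weight identity $a+b = \ell(\alpha)+\ell(\beta)+2g-1$, and the asymmetric degree bound $(2\pm 1)g \geq \lceil(1+c)/2\rceil + |\alpha|+|\beta|$ — must then be maintained through each induction step, and this is where I expect the main difficulty. The weight identity is additive and tracks the total ``Greek weight'' against the denominator exponent; each residue extraction raises $g$ by the right amount and preserves it automatically, even under the nonlinear convolution $\sum F_{g_1}F_{g_2}$. The asymmetric degree bound, however, reflects the fact that the ``physical'' branch $uz=1$ can absorb much more singular content (multiplicity up to $3g$) than the ``non-physical'' branch $uz=-1$ (multiplicity only $g$); verifying this asymmetry at every step, and in particular through the quadratic convolution term, is the main technical content of the proof. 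Once these invariants are checked in a single induction step, the statement follows.
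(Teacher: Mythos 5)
Your proposal follows essentially the same route as the paper: induction on genus, with \Cref{prop:noPoles!} localizing poles of $F_g$ at $uz = \pm 1$, \Cref{thm:toprec} reconstructing $F_g$ by residues there, the Greek variables arising as evaluations/derivatives of the kernel at those points, and a degree bookkeeping to control the exponents. You correctly identify the asymmetric bound on the two poles as the main technical burden; the paper handles this by introducing a \emph{Greek degree} $\deg_\gamma$ and two \emph{pole degrees} $\deg_+, \deg_-$ and tracking their behaviour under $\Gamma$, products, and the expansion of $xtP/Y$.

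Two technical points you should make explicit, because the argument would not close without them. First, the Tutte equation \eqref{eq:Tutte} is \emph{linear} in $F_g$ for $g\geq 1$ (the quadratic term $\sum F_{g_1}F_{g_2}$ involves $F_g$ only via $2F_0F_g$, which is absorbed into the kernel $Y=1-2txF_0-tx\theta$), whereas you describe it as ``quadratic in $F_g(x)$''; the linearity after conditioning on smaller genera is exactly what permits the kernel method. Second, and more substantively, the paper first truncates to $p_i=0$ for $i>K$ so that the kernel is a genuine rational function of bounded degree (with $N(u)$ of degree $2(K-1)$ split into small and large roots), carries out the residue argument, and \emph{then} checks that all coefficients are $K$-independent rational functions of the Greek variables before sending $K\to\infty$ by a projective limit. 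Without this truncation, $\theta=\sum_{k} p_k x^{-k}$ is an infinite sum, the kernel is not rational in $u$, and the residue bookkeeping cannot be set up; without the $K$-independence check, the limit cannot be taken. Your sketch skips this entirely, and it is not an automatic step.
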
 

\noindent{\bf Some comments on the theorems.}
\normalfont
\begin{itemize}[itemsep=0pt, topsep=0pt, parsep=0pt, leftmargin=10pt]
\item[$\bullet$]Note that the ``Greek'' variables $\eta, \zeta, \eta_i, \zeta_i$ are all infinite \emph{linear} combinations of the $p_kz^k$ with explicit coefficients.
 Moreover, for fixed $g$ the sums~\eqref{eq:mainRooted}, \eqref{eq:mainUnrooted} depend only of \emph{finitely many} Greek variables, see \textit{e.g.} \Cref{ex:unrootedGenus2}.
Note also that if only finitely many $p_i$'s are non-zero, then all the Greek letters are polynomials in the \emph{unique} variable $z$. For example, if $p_i = \mathbf{1}_{i=2}$, \textit{i.e.} if we enumerate bipartite quadrangulations, all Greek variables are polynomials in the variable $s$ ($=z+1$) defined in~\Cref{eq:introTutte}. In particular, and since bipartite quadrangulations are in bijection with general rooted maps (see \textit{e.g.}~\cite{Schaeffer:survey}), the rationality results of~\cite{BC} are a (very) special case of our results.
\item[$\bullet$]
Readers familiar with the bijective techniques of map enumeration will notice that the change of variables $(t,x) \leftrightarrow (z,u)$ is very natural in view of the link between bipartite maps and  mobiles~\cite{Chapuy:constellations}. However, those bijections are still far from giving combinatorial proofs of Theorems~\ref{thm:mainRooted} and~\ref{thm:mainUnrooted}.
\item[$\bullet$]
The case of genus~$0$ is not covered by the theorems above but is well known, and we will use it thoroughly. See~\Cref{prop:F0} below.
\end{itemize}



We conclude this section with a last notation that will be useful throughout the paper. In addition to the ``Greek'' variables $\eta, \zeta, (\eta_i)_{i\geq 1}, (\zeta_i)_{i\geq 1}$ already defined, we introduce the variable $\gamma$ as the following formal power series:
\begin{align*}
\gamma:=\sum_{k\geq 1} {2k-1 \choose k} p_k z^k. 
\end{align*}
Note that the change of variables~\eqref{eq:z} relating $z$ to $t$ is given by $z=t(1+\gamma)$.

\section{The Tutte equation, and the proof strategy of \Cref{thm:mainRooted}}
\label{sec:Tutte}



\subsection{The Tutte equation}

In this section, we state the main Tutte/loop equation that is the starting point of our proofs. We first define some useful operators. The \emph{rooting operator} $\Gamma$ is defined by
\begin{align}\label{eq:Gamma}
\displaystyle
\Gamma:=\sum_{k\geq 1} k x^k \frac{\partial}{\partial p_k}.
\end{align}
Combinatorially, the effect of $\Gamma$ is to mark a face of degree $2k$,  distinguish one of its $k$ white corners, and record the size of this face using the variable $x$. In other words, $\Gamma$ is the operator that selects a root face in a map. From the discussion of~\Cref{sec:defs}, it is easy to see that $F_g = \Gamma L_g$. 

If $F\equiv F(x)$ is a f.p.s whose coefficients are polynomials in $x$ (over some ring), we let $\Delta F(x) = \frac{F(x) - F(0)}{x}$.
 Equivalently, 
$\Delta F(x) = [x^{\geq 0}] \frac{1}{x}F(x)$ where $[x^{\geq 0}]$ is the operator that selects monomials with with positive powers in $x$. 
We define the operator:
\begin{align}\label{eq:Omega}
\displaystyle\Omega := \sum_{k \geq 1} p_k \Delta^k = [x^{\geq 0}] \sum_{k \geq 1} \frac{p_k}{x^k}.
\end{align}

\begin{prop}[Tutte equation -- folklore]\label{prop:Tutte}
The sequence $(F_g)_{g\geq 0}$ of formal power series in $\mathbb{Q}[p_1,p_2,\dots][x][[t]]$ is uniquely determined by the equations, for $g\geq 0$:
\begin{align}\label{eq:Tutte}
F_g 
= 
\mathbf{1}_{g=0} 
+x t \Omega F_{g}
+x t F^{(2)}_{g-1}
+x t \sum_{g_1+g_2 = g \atop g_1,g_2\geq 0} F_{g_1} F_{g_2}
,
\end{align}
where $F^{(2)}_{g-1} := \Gamma F_{g-1}$ is the g.f. of bipartite maps of genus $g$ with \emph{two} root faces.
\end{prop}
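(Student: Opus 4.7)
The plan is to derive the equation by the standard root-edge deletion/addition decomposition, and then to check that the resulting functional system determines the $(F_g)$ uniquely as formal power series in $t$. The base case is immediate: the only rooted bipartite map with no edges is the single white vertex of genus $0$, which accounts for the $\mathbf{1}_{g=0}$ term.

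For a nontrivial $M$ of genus $g$, let $e$ be the root edge going from the white root vertex $v_\circ$ to a black vertex $v_\bullet$, and let $f$ (of half-degree $k$) be the root face, lying on the right of $e$. I would split into three disjoint cases according to the local structure around $e$. \emph{First}, if the face $f'$ on the left of $e$ is different from $f$ (say of half-degree $m$), then deleting $e$ merges $f$ and $f'$ into a single face of half-degree $k+m-1$, producing a smaller rooted bipartite map of genus~$g$. Inversely, starting from any rooted bipartite map $M'$ of genus $g$ with root-face half-degree $k' \geq 1$, choosing one of the $k'$ black vertices on the boundary of the root face of $M'$ and joining it to $v_\circ$ through the root face produces exactly one such $M$, with $k+m = k'+1$. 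Summing these contributions and reading off the operator $\Omega = [x^{\geq 0}] \sum_k p_k/x^k$ yields the term $xt\, \Omega F_g$. \emph{Second}, if $f$ appears on both sides of $e$ but deleting $e$ keeps $M$ connected, Euler's formula forces the face $f$ to split into two faces $f_1, f_2$ of half-degrees summing to $k-1$, in a map of genus $g-1$. The two distinguished faces, together with the distinguished "attachment corner" inherited from $v_\bullet$, match the combinatorial meaning of $F^{(2)}_{g-1} = \Gamma F_{g-1}$ (a rooted map with a marked second face and a distinguished corner in it, which explains the factor of $k$ in the definition of $\Gamma$), yielding the term $xt\, F^{(2)}_{g-1}$. \emph{Third}, if $e$ is a bridge, deleting it disconnects $M$ into two rooted bipartite maps $M_1, M_2$ of genera $g_1 + g_2 = g$, and the face $f$ splits into root faces of $M_1, M_2$ with total half-degree $k-1$. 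A canonical re-rooting of each piece (using the next edge around the relevant vertex) produces white roots on both sides, giving the term $xt \sum_{g_1+g_2=g} F_{g_1} F_{g_2}$.

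For uniqueness, I would note that each term on the right-hand side carries an explicit factor of $t$, so at each order $t^n$ the equation expresses $[t^n] F_g$ as a polynomial expression in the coefficients $[t^j] F_{g'}$ with $j \leq n-1$ and $g' \leq g$ (here using that the $F_0$ contribution to $xt \, F_{g_1} F_{g_2}$ with $(g_1,g_2)=(0,g)$ or $(g,0)$ still carries the factor $t$). Together with the base values $[t^0] F_g = \mathbf{1}_{g=0}$, this recursively determines every $F_g$.

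The main obstacle will be the bookkeeping of re-rooting conventions and the associated multiplicities, in particular reconciling the factor of $k$ in $\Gamma$ with the number of corners where the root edge can be reinserted in the "handle" case, and keeping the bipartite color convention consistent (white root vertex) on both pieces in the bridge case. Once these conventions are fixed, the three cases together with the base case assemble directly into the stated Tutte equation.
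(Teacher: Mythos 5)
Your proof follows essentially the same route as the paper: root-edge deletion split into the same three cases (two distinct incident faces giving the $\Omega$ term, a non-separating handle edge with the same face on both sides giving the $\Gamma F_{g-1}$ term, and a bridge giving the convolution term), with the same conversely-described edge re-insertion argument. You additionally spell out the uniqueness claim via the explicit $t$-factor on the right-hand side, which the paper asserts without comment; this is a harmless and correct supplement.
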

\begin{figure}[h!!!!!]\centering
\includegraphics[width=0.8\linewidth]{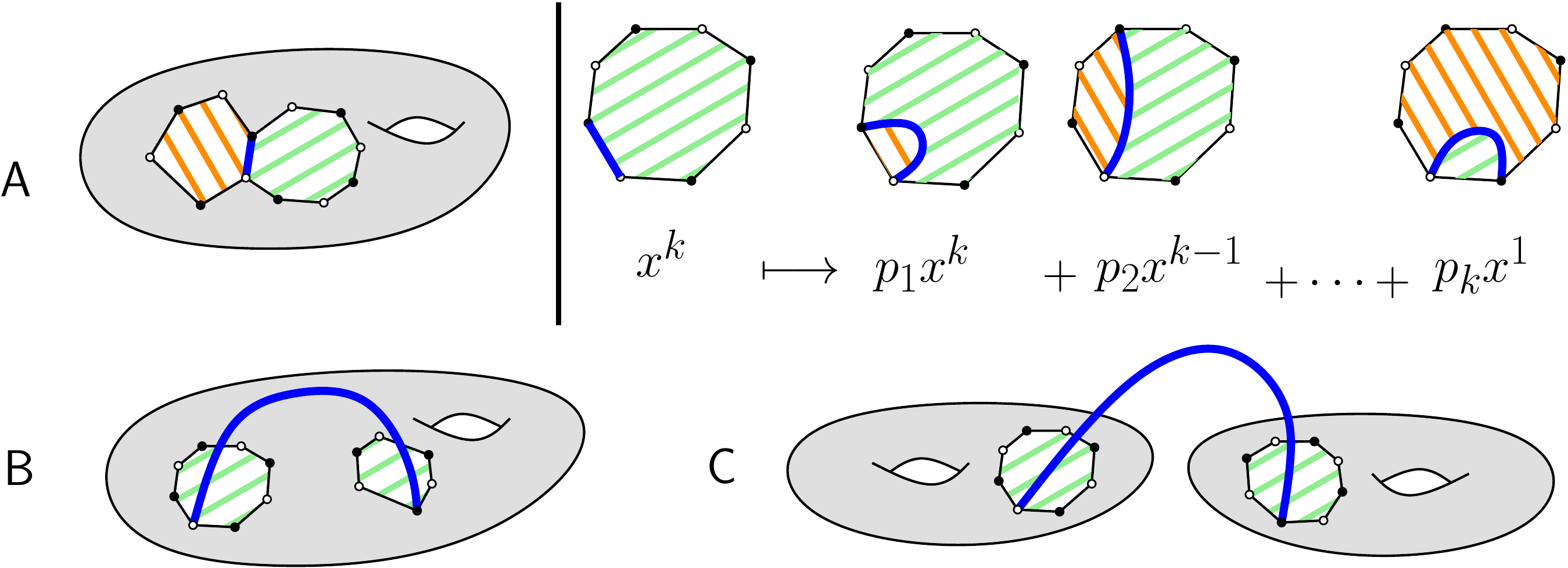}
\caption{A, B, C: Decomposition of a bipartite map by removing the root edge, leading to the three terms of the Tutte equation~\eqref{eq:Tutte}. The root face is represented in (green) rising hatching pattern, and the root edge is in (blue) fat width. Top-Right: The operator that handles case~A on generating functions.}\label{fig:proofTutte}
\end{figure}
\begin{proof}
This is equation is classical, let us briefly recall the proof. Start with a rooted map $\mathfrak{m}$ of genus $g$ with $n$ edges, and assume that $\mathfrak{m}$ has at least one edge (the case when $\mathfrak{m}$ has no edge happens only in genus $0$ and is taken into account by the indicator function). Now remove the root edge $e$ of $\mathfrak{m}$. Three things can happen (Figure~\ref{fig:proofTutte}).
\begin{itemize}[itemsep=0pt,parsep=0pt,topsep=0pt,leftmargin=14pt]
\item[A.] Removing $e$ does not disconnect $\mathfrak{m}$, \emph{and} $\mathfrak{m}$ is bordered by two different faces in $\mathfrak{m}$. In this case, removing $\mathfrak{m}$ gives rise to a new map $\mathfrak{m}'$ with one less face, one less edge, and the same vertex set, hence, by Euler's formula, genus $g$. Conversely, given any map $\mathfrak{m}'$ of genus $g$ with a root face of degree $k$, we can split the root face of $\mathfrak{m}'$ in $k$ different ways to obtain a map $\mathfrak{m}$ as above. The operator taking this operation into account on generating functions is given by:
$$
x^k \longmapsto p_1 x^{k} + p_2 x^{k-2} + \dots + p_{k}x = x \cdot \Big( \Omega x^k \Big),
$$
see Figure~\ref{fig:proofTutte}. Summing over all maps $\mathfrak{m}'$ of genus $g$, the contribution for this case is therefore $xt \Omega F_{g}$. 
\item[B.] Removing $e$ does not disconnect $\mathfrak{m}$, \emph{and} $\mathfrak{m}$ is bordered twice by the same face in $\mathfrak{m}$. In this case, removing $\mathfrak{m}$ gives rise to a new map $\mathfrak{m}'$ with one more face, one less edge, and the same vertex set, hence, by Euler's formula, genus $g-1$. Conversely, given any map $\mathfrak{m}'$ of genus $g-1$ with two root faces, merging them via a new edge reconstructs a map $\mathfrak{m}$ as above. To construct a map $\mathfrak{m}'$ of genus $g-1$ with two root faces, we start by a rooted map of genus $g-1$, counted by $F_{g-1}$, and we select a face and a corner inside it. Therefore the contribution for this second case is given by $x t \Gamma F_{g-1}$.
\item[C.] Removing $e$ disconnects $\mathfrak{m}$. We are thus left with two maps $\mathfrak{m}_1$ and $\mathfrak{m}_2$. Each of them is naturally rooted, and by Euler's formula the genera of these maps add up to $g$. Therefore this case gives rise to a contribution of $xt \sum_{g_1+g_2 = g \atop g_1,g_2\geq 0} F_{g_1} F_{g_2}. \hfill\qedhere$
\end{itemize}
\end{proof}
In genus $0$, the Tutte equation \eqref{eq:Tutte} was solved by Bender and Canfield~\cite{BC:planar} who gave the following remarkable expression in terms of the variables $z\equiv z(t)$, $u\equiv u(t;x)$ defined by~\eqref{eq:z}--\eqref{eq:u}:
\begin{prop}[Bender and Canfield~\cite{BC:planar}]\label{prop:F0}
The generating function of rooted bipartite maps of genus $0$ is given by:
\begin{align}\label{eq:F0}
F_0 = (1+uz) \left( 1 - \sum_{k=1}^{K} p_k z^k \sum_{\ell=1}^{k-1} u^\ell z^\ell \binom{2k-1}{k+\ell} \right).
\end{align}
\end{prop}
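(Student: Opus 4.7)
The plan is to verify directly that the formula~\eqref{eq:F0} solves the $g=0$ Tutte equation from Proposition~\ref{prop:Tutte}, and to conclude by the uniqueness of its solution in $\mathbb{Q}[p_1,p_2,\ldots][x][[t]]$. With $F^{(2)}_{-1}=0$, Proposition~\ref{prop:Tutte} specialized to $g=0$ reduces to the quadratic
\[
F_0 \;=\; 1 + xt\,\Omega F_0 + xt\,F_0^2,\qquad F_0\big|_{t=0}=1,
\]
where $\Omega=[x^{\geq 0}]\sum_k p_k x^{-k}$ is the operator from~\eqref{eq:Omega}. Denoting by $\Phi$ the right-hand side of~\eqref{eq:F0}, the initial condition $\Phi|_{t=0}=1$ is immediate since $z(0)=0$ and $u(0,x)=0$; hence the bulk of the work is the functional identity itself.

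The second step is to carry out the verification in the variables $(z,u)$. Using $t=z/(1+\gamma)$ and $x=u/(1+uz)^2$ yields $xt = uz/\bigl((1+\gamma)(1+uz)^2\bigr)$. The key algebraic fact underlying~\eqref{eq:F0} is that $u=x(1+uz)^2$ rationalizes the discriminants that naturally appear in the quadratic method; for instance
\[
1-4xz \;=\; \frac{(1-uz)^2}{(1+uz)^2},
\]
which is exactly what turns the classical planar-quadrangulation square root into a rational function, and is the reason~\eqref{eq:F0} is polynomial in $u$. Both $\Phi$ and $xt\,\Phi^2$ then become explicit rational expressions in $(z,u)$, so the equation is a polynomial identity at each fixed monomial in the $p_k$'s.

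The main obstacle is the operator $\Omega$, which extracts nonnegative powers of $x$ and therefore does \emph{not} commute with the change of variables in an elementary way. To handle it, I would use Lagrange inversion against $u=x(1+uz)^2$ to compute, for each $k\geq 1$ and $\ell\geq 0$, the extraction
\[
[x^{\geq 0}]\,x^{-k}\,\frac{u^\ell}{(1+uz)^{2\ell}} \;\in\; \mathbb{Q}[u,z]
\]
as an explicit polynomial. This yields a closed form for $\Omega\Phi$ as a polynomial in $u$ with coefficients in $\mathbb{Q}[p_1,p_2,\ldots][[z]]$. Once $xt\,\Omega\Phi$ and $xt\,\Phi^2$ are both expressed as rational functions in $(z,u)$, the equation $\Phi = 1 + xt\,\Omega\Phi + xt\,\Phi^2$ reduces to a finite polynomial identity in $(z,u)$ at each degree in the $p_k$'s, amounting to a telescoping/Vandermonde manipulation on the binomials $\binom{2k-1}{k+\ell}$. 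Uniqueness from Proposition~\ref{prop:Tutte} then concludes.
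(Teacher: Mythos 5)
The paper does not prove this proposition at all: it is stated as an attribution to Bender and Canfield~\cite{BC:planar}, and the later proof of Proposition~\ref{prop:structY-zero} indicates that the original derivation used the quadratic method on the $g=0$ Tutte equation. So there is no in-paper proof to compare your proposal against; I will evaluate your plan on its own merits.

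Your overall strategy---plug the candidate $\Phi$ into the $g=0$ Tutte equation $F_0 = 1 + xt\,\Omega F_0 + xt\,F_0^2$ and invoke the uniqueness guaranteed by Proposition~\ref{prop:Tutte}---is in principle a valid (if backwards) route, and the observation that $1-4xz=(1-uz)^2/(1+uz)^2$ is indeed the right algebraic lever. However, two concrete gaps remain. First, the extraction you propose to compute by Lagrange inversion is in fact trivial: since $x=u/(1+uz)^2$, the quantity $x^{-k}\,u^\ell/(1+uz)^{2\ell}$ is just $x^{\ell-k}$, a bare power of $x$, so $[x^{\geq 0}]$ of it is immediate and yields nothing in $\mathbb{Q}[u,z]$. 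What you actually need, given the form of $\Phi$, is $[x^{\geq 0}]\,x^{-k}\,(uz)^\ell\,(1+uz)$, and it is the expansion of $(uz)^\ell$ in powers of $x$ that genuinely requires Lagrange inversion against $u=x(1+uz)^2$. Second, the entire verification that $\Phi=1+xt\,\Omega\Phi+xt\,\Phi^2$ is compressed into the phrase ``a telescoping/Vandermonde manipulation on the binomials $\binom{2k-1}{k+\ell}$''; this is precisely where the proof lives, and as written there is no indication of which binomial identity is needed nor why it holds. A forward derivation in the spirit of Bender--Canfield---complete the square in the Tutte equation, argue that the discriminant is an exact square, and use the rationalization above to solve for $F_0$---would avoid having to pull an unverified combinatorial identity out of the air.
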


The strategy we will use to prove \Cref{thm:mainRooted} is to solve~\eqref{eq:Tutte} recursively on the genus $g$. Note that, for $g\geq 1$, and assuming that all the series $F_h, F^{(2)}_h$ are known for $h<g$, the Tutte equation \eqref{eq:Tutte} is \emph{linear} in the unknown series $F_g(x)$. More precisely it is a linear ``catalytic'' equation for the unknown series $F_g$ involving one catalytic variable (the variable $x$), see e.g.~\cite{MBM:icm}. Therefore it is tempting to solve it via the kernel method or one of its variant.

In what follows, in order to make the induction step feasible, we will need to fix an arbitrary integer $K\geq 2$, and to make the substitution $p_i=0$ for $i>K$ in \eqref{eq:Tutte}. The integer $K$ will be sent to infinity at the end of the induction step.
 To prevent a possible misunderstanding, we warn the reader that the substitution of $p_i$ to zero does \emph{not} commute with $\Gamma$, and in particular: 
$$F^{(2)}_g \Big|_{p_i =0} ~=~ \Big(\Gamma F_g\Big)\Big|_{p_i=0} 
~\neq~ \Gamma \Big(F_g\Big|_{p_i=0}\Big).
$$
In concrete terms, even after we set the variables $p_i$ to zero for all $i>K$, the series $F^{(2)}_g$ still counts maps in which the \emph{two} root faces may have arbitrarily large degrees.
We now proceed with the inductive part of the proof, that will occupy the rest of this section.
 The base case $g=1$ of the induction will be proved here as well (with empty induction hypothesis). 
To formulate our induction hypothesis, we need the following notion: if $A(u)$ is a rational function over some field containing $z$, we say that $A$ is \emph{$uz$-symmetric}  if $A(\frac1{z^2u})= A(u)$, and \emph{$uz$-antisymmetric} if $A(\frac{1}{z^2u})=- A(u)$. 

\smallskip
 \fbox{\begin{minipage}{0.9\textwidth}
{\it 
Induction Hypothesis: In the rest of~\Cref{sec:Tutte}, we fix $g\geq 1$. We assume that for all genera $g'\in[1..g-1]$, $\Cref{thm:mainRooted}$ holds for genus $g'$. In particular  $F_{g'}$ is a rational function of~$u$. We assume that it is $uz$-antisymmetric. }
\end{minipage}}

\smallskip
\noindent  We now start examining the induction step. {\bf From now on, we assume that $p_i=0$ for $i>K$.} In other words, each series mentioned below is considered under the substitution $\{p_i=0,i>K\}$, even if the notation does not make it apparent. Our first observation is the following:
\begin{prop}[Kernel form of the Tutte equation]\label{prop:TutteY}
Define 
$Y := 1- 2tx F_0 - tx \theta$, where $\theta:=\displaystyle \sum_{k=1}^K \frac{p_k}{x^k}$.
 Then one has:
\begin{align}\label{eq:TutteY}
Y F_g = 
x t F^{(2)}_{g-1}
+x t \sum_{g_1+g_2 = g \atop {g_1,g_2>0}} F_{g_1} F_{g_2}
+xt S, 
\end{align}
where $S\equiv S(t,p_1,p_2,\dots;x)$  is an element of $\mathbb{Q}[p_1,p_2,\dots][[t]]\left[\frac{1}{x}\right]$ of degree at most $K-1$ in $\frac{1}{x}$ without constant term.
\end{prop}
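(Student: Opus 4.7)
\medskip

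\textbf{Proof plan.} My plan is to view the Tutte equation~\eqref{eq:Tutte} as a linear equation in $F_g$ (for $g\geq 1$), isolating on the left-hand side all contributions that carry an explicit factor of $F_g$. There are two such contributions: the two terms $g_1=0$ and $g_2=0$ in the quadratic sum, which together produce $2xt\, F_0 F_g$; and the operator contribution $xt\, \Omega F_g$. The idea is to express the latter, modulo a correction term in $1/x$, as $xt\, \theta F_g$; collecting then gives the prefactor $Y = 1 - 2xt F_0 - xt\theta$ of $F_g$ on the left.

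Concretely, I would first use the definition~\eqref{eq:Omega} of $\Omega$ to write
\begin{equation*}
\Omega F_g \;=\; [x^{\geq 0}]\bigl(\theta F_g\bigr) \;=\; \theta F_g \;-\; [x^{<0}]\bigl(\theta F_g\bigr),
\end{equation*}
so that $xt\, \Omega F_g = xt\,\theta F_g + xt\,S$ with $S := -[x^{<0}](\theta F_g)$. Plugging this into~\eqref{eq:Tutte} (with $g\geq 1$, so the indicator vanishes) and moving $xt\,\theta F_g$ and $2xt\,F_0 F_g$ to the left-hand side produces precisely $Y F_g$ on the left and the required right-hand side of~\eqref{eq:TutteY}, with the diagonal terms of the quadratic sum now restricted to $g_1,g_2>0$.

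It remains to verify the claimed shape of $S$. Since $\theta=\sum_{k=1}^{K} p_k/x^k$ is a Laurent polynomial of degree $K$ in $1/x$ with no constant term, and since for $g\geq 1$ every bipartite map of genus $g$ has at least one edge (hence at least one root face of positive half-degree), the series $F_g$ lies in $x\,\mathbb{Q}[p_1,\dots,p_K][x][[t]]$. Writing $F_g = x \tilde F_g$, each term $p_k F_g/x^k = p_k x^{1-k}\tilde F_g$ contributes to $[x^{<0}]$ only a polynomial in $1/x$ of degree at most $k-1$ and with no constant term (the $k=1$ term contributes nothing). Summing over $k\leq K$ gives exactly the claim that $S\in \mathbb{Q}[p_1,p_2,\dots][[t]][1/x]$ has degree at most $K-1$ in $1/x$ and vanishing constant term.

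The main steps are thus the algebraic rearrangement and the bookkeeping of negative powers of $x$; there is no serious obstacle, but one should be careful that under the substitution $\{p_i=0,\ i>K\}$ the series $F_g$ is genuinely polynomial in $x$ (coefficient-wise in $t$), so that decomposing into $[x^{\geq 0}]$ and $[x^{<0}]$ parts is unambiguous.
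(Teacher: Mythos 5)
Your proposal is correct and follows essentially the same route as the paper: both decompose $\theta F_g$ into its $[x^{\geq 0}]$ part (which is $\Omega F_g$ under the specialization $p_i=0$ for $i>K$) and its $[x^{<0}]$ part, then move the $F_g$-terms to the left to produce the kernel $Y$. Your bookkeeping of the degree bound on $S$ (via $F_g = x\tilde F_g$ for $g\geq 1$) is a slightly more explicit rendering of the paper's remark that $F_g$ has no constant term in $x$, and your sign convention for $S$ is the one that makes the final equation come out cleanly.
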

\begin{proof}
Consider the Tutte equation~\eqref{eq:Tutte}.
Keeping in mind the fact that we have done the substitution $p_i = 0$ for $i > K$, we observe that
\[ \Omega F_g = [x^{\geq 0}] F_g \theta, \]
where $[x^{\geq 0}]$ is the operator on $\mathbb{Q}[p_1,p_2,\dots][x^{-1},x][[t]]$ that keeps only the nonnegative powers of $x$.
Now let $S$ be the negative part of $F_g\theta$, \textit{i.e}: 
$$S := [x^{<0}] F_g \theta = F_g \theta - [x^{\geq 0}] F_g \theta. $$ 
Since $\theta$ is in $\mathbb{K}[x^{-1}]$ and of degree $K$, and since $F_g$ has no constant terms in $x$, $S$ is also in $\mathbb{K}[x^{-1}]$ and has degree at most $K-1$.
Since $\theta F_g = \Omega F_g + S$, 
we can now rewrite the equation as follows.
\[F_g = x t \theta F_{g} + xtS +x t F^{(2)}_{g-1} +x t \sum_{\substack{g_1+g_2 = g \\ g_1,g_2 > 0}} F_{g_1} F_{g_2} + 2xtF_0F_g. \]
We now move all terms involving $F_g$ to the left and factor out $F_g$, to obtain~\eqref{eq:TutteY}.
\end{proof}

\subsection{Rational structure of $F_g$ and the topological recursion}

In this section we describe in detail the structure of the kernel $Y$ and of the generating function $F_g$, in order to establish our main recurrence equation (\Cref{thm:toprec}). We leave the proofs of the most technical statements to~\Cref{sec:Y} and~\Cref{sec:Gamma}.

\newcommand{\rA}{\mathbb{A}}
\newcommand{\PP}{\mathbb{P}}
In order to analyse~\Cref{prop:TutteY} it is natural to study the properties of the ``kernel'' $Y$.
In what follows, we will consider polynomials in $\rA[z][u]$ or $\rA[[z]][u]$ where $\rA:=\mathbb{Q}(p_1,p_2,\dots,p_K)$. Note that any such polynomial, viewed as a polynomial in $u$, is split over $\PP:=\overline{\rA}((z^*))$.
 An element $u_0\in\PP$ is \emph{large} if it starts with a negative power in $z$, and is \emph{small} otherwise.  The following result is a consequence of~\eqref{eq:F0} and some computations that we delay to~\Cref{sec:Y}. As explained in \Cref{sec:notation}, it is implicit in the following that generating functions are considered under the change of variables $(t,x)\leftrightarrow (z,u)$:
\begin{prop}[Rational structure of the kernel]\label{prop:structY}
$Y$ is an element of $\mathbb{Q}(z,u,p_1,p_2,\dots,p_K)$ of the form:
$$
Y = \frac{N(u)(1-uz)}{u^{K-1}(1+\gamma) (1+uz)}
$$
 where $N(u)\in\rA[z][u]$ is a polynomial of degree $2(K-1)$ in $u$.
\end{prop}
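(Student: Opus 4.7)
The plan is to substitute Bender--Canfield's formula~\eqref{eq:F0} together with the change of variables $(t,x)\leftrightarrow(z,u)$ into the definition $Y = 1 - 2tx F_0 - tx\theta$, clear denominators by multiplying by the putative denominator $u^{K-1}(1+\gamma)(1+uz)$, and then verify separately the degree bound and the divisibility by $1-uz$.

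Using $tx = \frac{zu}{(1+\gamma)(1+uz)^2}$ (from $t=z/(1+\gamma)$ and $x = u/(1+uz)^2$) and $1/x^k = (1+uz)^{2k}/u^k$, one obtains the fully explicit expression
\[
M(u) \;:=\; u^{K-1}(1+\gamma)(1+uz)\, Y \;=\; u^{K-1}(1+\gamma)(1+uz) \;-\; 2zu^K H(u) \;-\; z\sum_{k=1}^K p_k\,u^{K-k}(1+uz)^{2k-1},
\]
where $H(u) := 1 - \sum_{k=1}^K p_k z^k \sum_{\ell=1}^{k-1} u^\ell z^\ell \binom{2k-1}{k+\ell}$ is the bracket of~\eqref{eq:F0}. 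Each of the three summands lies in $\rA[z][u]$, with $u$-degrees $K$, $2K-1$, and $\max_k(K+k-1)=2K-1$ respectively, so $\deg_u M \leq 2K-1$. The coefficient of $u^{2K-1}$ in $M$ comes only from the second term (contributing $2z^{2K}p_K$) and from the $k=K$ piece of the third (contributing $-z^{2K}p_K$), yielding the leading coefficient $z^{2K}p_K$, so $\deg_u M = 2K-1$ exactly.

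Next I would prove $M(1/z)=0$. At $u=1/z$ one has $1+uz=2$, hence $x=1/(4z)$, $tx = 1/(4(1+\gamma))$, and $\theta = \sum_k p_k(4z)^k$. Evaluating $F_0$ requires only the elementary symmetry identity $\sum_{\ell=0}^{k-1}\binom{2k-1}{k+\ell}=4^{k-1}$ (half of $2^{2k-1}$, since $\binom{2k-1}{j}=\binom{2k-1}{2k-1-j}$), which produces $F_0\bigl|_{u=1/z} = 2(1+\gamma)-\tfrac12\sum_k p_k(4z)^k$. A direct substitution then gives $2txF_0 + tx\theta = 1$, so $Y$ and therefore $M$ vanish at $u=1/z$. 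Viewing $1-uz$ as a polynomial in $u$ over $\rA[z]$ with coefficients $1$ and $-z$, the bottom-up division $n_i = m_i + z\,n_{i-1}$ shows that divisibility of $M$ by $(1-uz)$ within $\rA[z][u]$ is in fact \emph{equivalent} to $M(1/z)=0$; consequently $M=(1-uz)N(u)$ with $N\in\rA[z][u]$ of degree $2K-2$ in $u$, which is precisely the claimed form of $Y$.

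I do not anticipate any conceptual obstacle: the whole argument is bookkeeping on the change of variables combined with the elementary binomial identity above. The only moderately delicate point is the cancellation in the first step that keeps $\deg_u M$ equal to $2K-1$ (so that the cofactor $N$ has degree exactly $2K-2$ rather than something smaller or larger), and the bookkeeping that extracts the leading coefficient from the two competing terms.
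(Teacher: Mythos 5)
Your proof is correct and follows essentially the same route as the paper's: clear denominators using the change of variables $tx=\frac{zu}{(1+\gamma)(1+uz)^2}$ and $1/x^k=(1+uz)^{2k}/u^k$, observe that the resulting numerator is a polynomial in $u$ of degree $\leq 2K-1$, and show it vanishes at $u=1/z$ via a binomial-sum evaluation, which forces the factor $(1-uz)$. The paper groups $2F_0+\theta$ before clearing denominators while you keep the $F_0$ and $\theta$ contributions separate, but the computation is the same; you also make two steps slightly more explicit than the paper's proof of this proposition, namely that the numerator has degree exactly $2K-1$ (by extracting the leading coefficient $p_Kz^{2K}$, which the paper only checks in the next proposition's Newton polygon argument) and that $M(1/z)=0$ genuinely implies divisibility in $\rA[z][u]$ despite $1-uz$ having non-unit leading coefficient $-z$.
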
 
\begin{proof}
See \Cref{sec:Y}.
\end{proof}


\begin{prop}[Structure of zeros of the kernel] \label{prop:structY-zero}
~
\begin{itemize}
\item[(1)] $Y$ is $uz$-antisymmetric.
\item[(2)] Among the $2(K-1)$ zeros of $N(u)$ in $\PP$, $(K-1)$ are small and $(K-1)$ are large, and large and small zeros are permuted by the transformation $u\leftrightarrow \frac{1}{z^2u}$.
\end{itemize}
\end{prop}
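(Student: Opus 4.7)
My plan is to handle the two parts in sequence, leveraging the planar Tutte equation for (1) and the factored form of $Y$ from \Cref{prop:structY} for (2). Throughout, let $\sigma$ denote the involution $u \mapsto 1/(z^2u)$, and write $A^*$ for $A \circ \sigma$ on any rational function $A(z,u)$.

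For part~(1), the starting point is that $x$, $t$, and $\theta$ are $\sigma$-invariant: indeed $x = u/(1+uz)^2$ takes the same value at $u$ and $1/(z^2u)$ (these are the two roots of $xz^2u^2+(2xz-1)u+x=0$), so $1/x^m = (1+uz)^{2m}/u^m$ is $\sigma$-invariant too. The planar Tutte equation (the $g=0$ case of \Cref{prop:Tutte}) together with the decomposition $\Omega F_0 = \theta F_0 - S_0$, where $S_0 := [x^{<0}](\theta F_0)$ is a polynomial in $1/x$ and hence $\sigma$-invariant, rearranges into the kernel form
\[ Y F_0 \;=\; 1 - xt\, S_0 - xt\, F_0^2. \]
Applying $\sigma$ to both sides and subtracting from the original, using $Y - Y^* = -2xt(F_0 - F_0^*)$ (immediate from the definition of $Y$) and factoring out $F_0 - F_0^*$, I obtain $Y = -xt(F_0 - F_0^*)$. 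Applying $\sigma$ once more and using $\sigma^2 = \mathrm{id}$ yields $Y^* = -xt(F_0^* - F_0) = -Y$, which is the desired antisymmetry. The factoring is legitimate because $F_0 \neq F_0^*$ as rational functions, as one sees immediately from~\eqref{eq:F0}.

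For part~(2), the antisymmetry of $Y$ combined with the form $Y = N(u)(1-uz)/(u^{K-1}(1+\gamma)(1+uz))$ forces the reciprocal identity
\[ z^{2K-2}\, u^{2K-2}\, N(1/(z^2u)) \;=\; N(u), \]
obtained by tracking the $\sigma$-transform of each of the factors $(1-uz), (1+uz), u^{K-1}, 1+\gamma$ separately. Hence the $2(K-1)$ roots of $N$ in $\PP$ come in $\sigma$-pairs $\{u_0, 1/(z^2u_0)\}$, and $\mathrm{val}_z(u_0) + \mathrm{val}_z(1/(z^2u_0)) = -2$ for each pair. To pin down the small/large split, I would evaluate at $z=0$: since $t = z/(1+\gamma)$ vanishes there, the definition $Y = 1 - 2xtF_0 - xt\theta$ gives $Y|_{z=0} = 1$; the factored form of \Cref{prop:structY} then forces $N(u)|_{z=0} = u^{K-1}$. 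Consequently, exactly $K-1$ roots of $N$ are Puiseux series collapsing to $u=0$ as $z \to 0$, hence small with $\mathrm{val}_z > 0$; their $\sigma$-partners then have $\mathrm{val}_z < -2$ and are large. This yields both the $(K-1)$--$(K-1)$ split and the pairing by $\sigma$.

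The main potential obstacle is the Puiseux-continuity step in~(2): one must check that the $(K-1)$-fold zero of $N(u)|_{z=0}$ at $u = 0$ genuinely unfolds into $K-1$ Puiseux roots with strictly positive $z$-valuation, rather than degenerating. Given the explicit form of $N$ produced in the proof of~\Cref{prop:structY}, this is a straightforward Newton polygon computation; the pairing by $\sigma$ then handles the other $K-1$ roots with no further work, and in particular rules out the only potentially bad case of roots of $z$-valuation exactly $-1$.
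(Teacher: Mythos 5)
Your proof is correct and takes a genuinely different route from the paper in both parts.

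For part~(1), the paper uses the ``quadratic method'': it rewrites the planar Tutte equation so that $Y^2$ equals a Laurent polynomial in $x$, which is therefore $\sigma$-invariant; it then argues that $Y$ itself, being a Laurent polynomial in $uz$, must be either $uz$-symmetric or $uz$-antisymmetric; and finally it settles the sign by inspecting the leading behaviour of $Y$ as $uz\to 0$ and $uz\to\infty$. You instead manipulate the planar kernel equation $YF_0 = 1 - xtS_0 - xtF_0^2$ together with its $\sigma$-transform, use the identity $Y-Y^* = -2xt(F_0-F_0^*)$, factor out $F_0-F_0^*$, and arrive directly at the clean identity $Y = -xt(F_0 - F_0^*)$, from which $Y^* = -Y$ is immediate. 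Your route is shorter, avoids the intermediate ``$Y$ is a Laurent polynomial, hence $\pm$-symmetric'' step, and produces a new closed formula for $Y$ in passing.

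For part~(2), the paper determines the full Newton polygon of $N(u)$ with vertices at $(0,1)$, $(K-1,0)$ and $(2(K-1),2K-1)$, and reads off both the small and large roots directly from the slopes. You compute only the ``small'' side: from $Y|_{z=0}=1$ you deduce $N(u)|_{z=0}=u^{K-1}$, which gives $K-1$ Puiseux roots of strictly positive valuation (the leftmost Newton-polygon segments). You then invoke the reciprocal identity on $N$ (a consequence of part~(1) and the factorization of \Cref{prop:structY}) to obtain the remaining $K-1$ roots as the $\sigma$-partners, which have valuation $<-2$ and are large. This is more efficient than the paper's computation and nicely demonstrates how part~(1) simplifies part~(2). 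One small imprecision in the last sentence: the $\sigma$-pairing by itself is consistent with a pair of roots both of valuation exactly $-1$; what actually rules this out is the counting argument — the $K-1$ small roots and their $K-1$ large partners already exhaust all $2(K-1)$ roots, leaving no room for additional valuation-$(-1)$ roots. Your logic includes this counting implicitly, so the argument is sound, but it is the count, not the involution alone, that excludes the bad case.
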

\begin{proof}
See \Cref{sec:Y}.
\end{proof}

\noindent Before solving~\eqref{eq:TutteY}, we still need to examine more closely the structure of $F_g$.
In what follows each rational function $R(u)\in \mathbb{B}(u)$ for some field $\mathbb{B}$ is implicitely considered as an element of $\overline{\mathbb{B}}(u)$. In particular its denominator is split, and the notion of \emph{pole} is well defined (poles are elements of $\overline{\mathbb{B}}$). Moreover, $R(u)$ has a partial fraction expansion, with coefficients in $\overline{\mathbb{B}}$, and the \emph{residue} of $R(u)$ at a pole $u_* \in \overline{\mathbb{B}}$ is a well defined element of $\overline{\mathbb{B}}$, namely the coefficient of $(u-u_*)^{-1}$ in this expansion.  
The following result is perhaps the most crucial conceptual step of the topological recursion and of the proof of \Cref{thm:mainRooted}:
\begin{prop}[Structure and poles of $F_g$]\label{prop:noPoles!}
$F_g$ is an $uz$-antisymmetric element of  $\rA[[z]](u)$.
Its poles, that are elements of $\PP$, are contained in $\{\frac{1}{z},-\frac{1}{z}\}$. Moreover, $F_g$ has negative degree in $u$.
\end{prop}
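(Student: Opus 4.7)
The plan is to play two distinct structural features of $F_g$ against each other. On the one hand, the kernel form~\eqref{eq:TutteY} of the Tutte equation lets us solve $F_g = \mathrm{RHS}/Y$: by the induction hypothesis, the products $F_{g_1}F_{g_2}$ with $g_1,g_2\geq 1$ are rational in $u$, and the same is true for $F_0$ via the explicit formula of \Cref{prop:F0}, for $xtS$ because $S$ is a Laurent polynomial in $x$ and the change of variables sends that ring into $\rA[[z]](u)$, and, after an auxiliary analysis, for $F^{(2)}_{g-1} = \Gamma F_{g-1}$. Combined with the rational structure of $Y$ given by \Cref{prop:structY}, this yields $F_g \in \rA[[z]](u)$. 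On the other hand, $F_g$ is originally an element of $\rA[x][[t]]$, so the change of variables $(t,x)\leftrightarrow(z,u)$ realizes it as an element of $\rA[u][[z]]$: a formal power series in $z$ whose $z^n$-coefficient is a polynomial in $u$.

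The key step combines these two structures. Suppose $F_g$ had a pole at some \emph{small} $u_0 \in \PP$ with nonzero leading coefficient $u_0^{(0)} \in \overline{\rA}$. Then the local contribution $c/(u-u_0)$ to the partial-fraction decomposition of $F_g$ would expand as $-cu_0^{-1}\sum_{n\geq 0}(u/u_0)^n$, whose $z^0$-coefficient is $-c\sum_n u^n/(u_0^{(0)})^{n+1}$, an infinite series in $u$ --- contradicting $F_g \in \rA[u][[z]]$. A variant of the same argument rules out a pole at any small element of $\PP$ (including $0$). Hence every pole of $F_g$ in $\PP$ must be large. Since the poles of $F_g = \mathrm{RHS}/Y$ lie among the zeros of $Y$ --- which by \Cref{prop:structY-zero} consist of $u = 1/z$ together with $K-1$ small and $K-1$ large zeros of $N(u)$ --- the small zeros of $N$ cannot be poles of $F_g$, and the identity $YF_g = \mathrm{RHS}$ therefore forces $\mathrm{RHS}$ to vanish at every small zero of $N$.

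At this point I would invoke the involution $u \mapsto 1/(z^2u)$. It exchanges small and large zeros of $N$ and reverses the sign of $Y$ (\Cref{prop:structY-zero}). By induction, each product $F_{g_1}F_{g_2}$ is $uz$-symmetric (antisymmetric $\times$ antisymmetric), and $xtS$ is $uz$-symmetric since $x$ is invariant under the involution; granted the parallel parity analysis for $xtF^{(2)}_{g-1}$ that \Cref{sec:Gamma} provides, the whole $\mathrm{RHS}$ is $uz$-symmetric. Dividing the symmetric $\mathrm{RHS}$ by the antisymmetric $Y$ makes $F_g$ itself $uz$-antisymmetric. This antisymmetry transports the vanishing of $\mathrm{RHS}$ from each small zero of $N$ to its paired large zero; equivalently, a would-be pole of $F_g$ at a large zero of $N$ would, under the involution, produce a forbidden pole at the paired small zero. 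Hence $F_g$ has no pole at any zero of $N$. The only survivor among the zeros of $Y$ is $u = 1/z$, a fixed point of the involution; and $u = -1/z$ is inherited as a pole of $F_g$ because $Y$ has a simple pole there while the sum $\sum F_{g_1}F_{g_2}$ in $\mathrm{RHS}$ contributes a higher-order pole. This yields the containment in $\{1/z, -1/z\}$.

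The negative degree in $u$ is a direct comparison: $Y$ has degree $K-1$ in $u$ (\Cref{prop:structY}), while each term of $\mathrm{RHS}$ has $u$-degree at most $K-2$ --- $xtS$ by a direct count from $x = u/(1+uz)^2$ and $\deg_u S \leq K-1$, and the remaining terms by induction --- so $F_g = \mathrm{RHS}/Y$ has negative degree. The main obstacle in the whole scheme is the control of $F^{(2)}_{g-1} = \Gamma F_{g-1}$: because $\Gamma = \sum_k kx^k\,\partial/\partial p_k$ is taken at fixed $(t,x)$, under the change of variables it picks up terms in $\partial_z$ and $\partial_u$ that a priori can destroy both the rationality in $u$ and the $uz$-symmetry the above argument needs; extracting these properties for $F^{(2)}_{g-1}$ is precisely what \Cref{sec:Gamma} is devoted to.
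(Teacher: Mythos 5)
Your proposal runs along essentially the same track as the paper's proof: write $F_g = \mathrm{RHS}/Y$ via the kernel form, observe that $F_g$ lives simultaneously in $\rA[[z]](u)$ and in $\rA[u][[z]]$ to kill small poles, then use the $uz$-antisymmetry of $F_g$ (forced by the $uz$-symmetry of $\mathrm{RHS}$ over the $uz$-antisymmetric $Y$) together with \Cref{prop:structY-zero} to kill the large zeros of $N$, and finish with a degree count. That is exactly the paper's strategy, including the correct identification of $\Gamma F_{g-1}$ as the main bottleneck that \Cref{sec:Gamma} handles.

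The real gap is in the ``no small pole'' step, which is precisely the content of \Cref{lemma:noSmall} in the paper. Your sketch takes a single simple pole at a small $u_0$ with nonzero $z^0$-coefficient, expands its local contribution and reads off an infinite power series in $u$ at order $z^0$. This has two problems you don't address. First, the partial-fraction decomposition of $F_g$ over $\PP$ may have several small poles (possibly with multiplicity), and their local contributions can conspire to cancel at any fixed order in $z$; looking at a single pole in isolation does not give a contradiction until you argue that no such cancellation can occur (for instance by tracking $z$-valuations). Second, and more importantly, you defer the case of small poles whose leading coefficient vanishes (including $u_0=0$ and, more generally, Puiseux roots like $u_0 = a z^{1/2} + \cdots$) to ``a variant of the same argument'' without giving it; this is exactly the case that forces the paper's Lemma~\ref{lemma:noSmall} to split the denominator into the two factors $Q_1(u)=\prod(1-uu_i)$ and $Q_2(u)=\prod(u-v_j)$ with $v_j$ small and \emph{without constant term}, and to argue at the level of the denominator polynomial rather than pole by pole. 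As it stands, your argument does not close this case.

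Two smaller remarks. Your aside explaining why $u=-1/z$ ``is inherited as a pole'' is not needed and somewhat muddles the logic: the statement only asserts a containment of the pole set, so it suffices to exclude $0$ and the zeros of $N(u)$; nothing requires you to show that $-1/z$ actually occurs. And in the degree count you should make explicit that the bound $\deg_u \mathrm{RHS} \leq K-2$ is saturated only by $xtS$ (with $x$ of degree $-1$ and $S$ of degree $\leq K-1$), the other terms $xtF^{(2)}_{g-1}$ and $xt\sum F_{g_1}F_{g_2}$ having strictly negative degree by the induction hypothesis and \Cref{lemma:stabGamma}.
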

The proof of~\Cref{prop:noPoles!} uses the next two lemmas:
\begin{lemma}\label{lemma:stabGamma}
If $A$ is an element of $\mathbb{Q}(u,z,\gamma,\eta,\zeta,(\eta_i)_{i \geq 1}, (\zeta_i)_{i \geq 1})$ with negative degree in $u$ whose poles in $u$ are among $\{\pm\frac{1}{z}\}$, then so is $\Gamma A(u)$. Moreover, if $A(u)$ is $uz$-antisymmetric, then $\Gamma A(x)$ is $uz$-symmetric.
\end{lemma}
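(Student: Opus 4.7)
The plan is to apply the chain rule, treating $A$ as a rational function in its ``independent'' arguments $u, z, \gamma, \eta, \zeta, (\eta_i), (\zeta_i)$, and then to study $\Gamma v$ for each such variable $v$ separately. Explicitly, I will write
\[
\Gamma A \;=\; \frac{\partial A}{\partial u}\,\Gamma u \;+\; \frac{\partial A}{\partial z}\,\Gamma z \;+\; \sum_{w} \frac{\partial A}{\partial w}\,\Gamma w,
\]
where $w$ ranges over the Greek variables.

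First I would compute $\Gamma z$ by implicit differentiation of~\eqref{eq:z}, obtaining $\partial z/\partial p_k = \binom{2k-1}{k} z^{k+1}/(1-\eta)$. Combined with the closed form $\sum_k \binom{2k-1}{k}(xz)^k = uz/(1-uz)$---a consequence of the central-binomial generating function together with the identity $\sqrt{1-4xz} = (1-uz)/(1+uz)$, which follows directly from~\eqref{eq:u}---this gives an explicit rational expression for $\Gamma z$ with a single pole at $u = 1/z$. An analogous differentiation of~\eqref{eq:u} yields the fundamental identity $\Gamma u = \tfrac{2u^2}{1-uz}\,\Gamma z$, which will be crucial for the symmetry step. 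The same strategy handles each Greek $w$: the relevant sums $\sum_k P(k)\binom{2k-1}{k}(xz)^k$ for the $\eta$-family are rational in $uz$ with poles only at $uz = 1$, while the $\zeta$-family uses the Catalan-type identity $\sum_k \binom{2k-1}{k}(xz)^k/(2k-1) = uz/(1+uz)$ (and its higher iterates) to produce poles only at $uz = -1$. In each case $\Gamma w$ lies in the prescribed field, has poles in $\{\pm 1/z\}$, and has negative $u$-degree.

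Putting these together, $\Gamma A$ inherits negative $u$-degree (note $\Gamma u$ has $u$-degree $0$ but multiplies $\partial_u A$, which drops the degree of $A$ by one) and poles only in $\{\pm 1/z\}$, proving the first part. For the symmetry, let $\sigma\colon u \mapsto 1/(z^2u)$. Differentiating the hypothesis $\sigma(A) = -A$ with respect to each argument yields the transformation rules
\[
\sigma\!\bigl(\tfrac{\partial A}{\partial u}\bigr) = z^2u^2\,\tfrac{\partial A}{\partial u}, \qquad \sigma\!\bigl(\tfrac{\partial A}{\partial z}\bigr) = -\tfrac{\partial A}{\partial z} + \tfrac{2u}{z}\,\tfrac{\partial A}{\partial u}, \qquad \sigma\!\bigl(\tfrac{\partial A}{\partial w}\bigr) = -\tfrac{\partial A}{\partial w}.
\]
A direct check on the explicit formulas shows $\sigma(\Gamma z) = -\Gamma z$ and $\sigma(\Gamma w) = -\Gamma w$ for each Greek $w$, whereas $\sigma(\Gamma u) = \Gamma u/(u^3 z^3)$ (neither symmetric nor antisymmetric). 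Substituting into the chain-rule expression, the $(-1)\cdot(-1)$ cancellations handle all the $\partial_z A$ and $\partial_w A$ terms; the remaining discrepancy reduces to
\[
\sigma(\Gamma A) - \Gamma A \;=\; \frac{\partial A}{\partial u}\left[\,\tfrac{1-uz}{uz}\,\Gamma u \;-\; \tfrac{2u}{z}\,\Gamma z\,\right],
\]
which vanishes identically by the fundamental identity $\Gamma u = \tfrac{2u^2}{1-uz}\,\Gamma z$. Hence $\sigma(\Gamma A) = \Gamma A$, i.e.\ $\Gamma A$ is $uz$-symmetric.

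The main obstacle will be the explicit closed-form computation of $\Gamma \eta_i$ and $\Gamma \zeta_i$ at general index $i$---the falling-factorial and double-factorial coefficients require iterated applications of the Catalan-type identity---together with the case-by-case verification that $\sigma(\Gamma w) = -\Gamma w$ for every Greek $w$. These are routine (if tedious) manipulations of hypergeometric-type generating functions, and crucially nothing in them forces us outside the field $\mathbb{Q}(u, z, \gamma, \eta, \zeta, (\eta_i), (\zeta_i))$.
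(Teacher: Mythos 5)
Your proof is correct and follows the same high-level strategy as the paper's: expand $\Gamma A$ by the chain rule over the variables $u, z, \gamma, \eta, \zeta, (\eta_i), (\zeta_i)$, and reduce everything to explicit rational formulas for $\Gamma$ applied to each variable (the paper's Propositions~\ref{prop:Gamma-on-uz} and~\ref{prop:Gamma-on-Greek-expr}). The rationality, pole-location, and degree parts of your argument match; your count that $\Gamma u$ has $u$-degree $0$ is the correct one (the factor $s^{-1}-s$ has degree $-1$, so every $\Gamma v$ has degree $\le 0$). The interesting divergence is in the symmetry step. The paper asserts that each summand $(\Gamma v)\,\partial_v A$ is individually $uz$-symmetric, via ``$u^{-1}\Gamma u$ symmetric, $u\,\partial_u A$ symmetric'' and ``$\partial_z A$, $\Gamma z$ antisymmetric''. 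That presentation is delicate when $\partial_z$ is taken with $u$ held fixed, since the involution $u\mapsto 1/(z^2u)$ does not commute with $\partial_z\big|_u$ — this is precisely the cross term $\tfrac{2u}{z}\partial_u A$ you identify. You handle this cleanly by tracking the defect globally: $\sigma(\Gamma A)-\Gamma A = \partial_u A\bigl[\tfrac{1-uz}{uz}\Gamma u - \tfrac{2u}{z}\Gamma z\bigr]$, which vanishes identically because $\Gamma u = \tfrac{2u^2}{1-uz}\Gamma z$ (a restatement of $\Gamma x = 0$ applied to $x = u(1+zu)^{-2}$). This is a more robust and self-contained way to close the symmetry step than the paper's term-by-term claim. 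The hypergeometric bookkeeping you defer — closed forms for $\Gamma\eta_i$, $\Gamma\zeta_i$ and their antisymmetry — is indeed routine and is exactly the content of Proposition~\ref{prop:Gamma-on-Greek-expr}.
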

\begin{proof}
See \Cref{sec:actionGamma}.
\end{proof}

\begin{lemma}\label{lemma:noSmall}
Let $A(u) \in  \mathbb{B}[[z]](u)\cap\mathbb{B}[u]((z)) \subset \mathbb{B}(u)((z))$ be a rational function in $u$ whose coefficients are formal power series in $z$ over some field $\mathbb{B}$, and that as a Laurent series in $z$ has  coefficients that are polynomials in $u$.
Then $A(u)$, seen as a rational function in $u$, has no small pole. 
\end{lemma}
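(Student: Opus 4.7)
I would perform a partial-fraction-style decomposition of $A$ over $\mathbb{B}[[z]][u]$, separating the contributions from small and large poles of $A$, and then show that the small-pole part must vanish. Concretely, write $A=P/Q$ with $P,Q\in\mathbb{B}[[z]][u]$ coprime, and argue that $Q$ admits a factorization $Q=Q_sQ_l$ inside $\mathbb{B}[[z]][u]$, with $Q_s$ monic in $u$ having only small roots in $\overline{\mathbb{B}}((z^*))$, and $Q_l$ having only large roots. The key input is that the set of small roots of $Q$ is stable under $\mathrm{Gal}(\overline{\mathbb{B}}((z^*))/\mathbb{B}((z)))$ since Galois automorphisms preserve the valuation; hence the coefficients of $Q_s:=\prod_{u_s\text{ small}}(u-u_s)$, being elementary symmetric functions of elements of nonnegative valuation, lie in $\mathbb{B}((z))\cap\overline{\mathbb{B}}[[z^*]]=\mathbb{B}[[z]]$, and $Q_l:=Q/Q_s$ then lies in $\mathbb{B}[[z]][u]$ by division since $Q_s$ is monic. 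Up to factoring out a power of $z$ (which merely shifts the $z$-valuation of $A$), I may further assume that $\overline{Q_l}:=Q_l|_{z=0}$ is a nonzero constant of $\mathbb{B}$, since $Q_l$ has no small root and so $\overline{Q_l}$ has no root in $\overline{\mathbb{B}}$.

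Next, since $\mathrm{Res}_u(Q_s,Q_l)|_{z=0}=\overline{Q_l}^{\,\deg_uQ_s}$ is a unit of $\mathbb{B}[[z]]$, the B\'ezout identity over $\mathbb{B}[[z]][u]$ yields $U,V\in\mathbb{B}[[z]][u]$ with $UQ_s+VQ_l=1$, whence $A=PU/Q_l+PV/Q_s$. Dividing $PV$ by the monic polynomial $Q_s$ yields $PV=S_sQ_s+R_s$ with $S_s,R_s\in\mathbb{B}[[z]][u]$ and $\deg_uR_s<\deg_uQ_s$, hence $A=(S_s+PU/Q_l)+R_s/Q_s$. Writing $Q_l=\overline{Q_l}(1+z\widetilde{Q_l})$ with $\widetilde{Q_l}\in\mathbb{B}[u][[z]]$, a geometric series expansion gives $1/Q_l\in\mathbb{B}[u][[z]]$, so the first bracket lies in $\mathbb{B}[u][[z]]$. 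On the other hand, $Q_s$ monic gives $\overline{Q_s}\neq 0$ in $\mathbb{B}(u)$, so $B_s:=R_s/Q_s\in\mathbb{B}(u)[[z]]$; combined with the hypothesis $A\in\mathbb{B}[u]((z))$, this forces $B_s\in\mathbb{B}[u]((z))\cap\mathbb{B}(u)[[z]]=\mathbb{B}[u][[z]]$.

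It remains to show $B_s=0$. From the identity $R_s=B_sQ_s$ in $\mathbb{B}[u][[z]]$, suppose for contradiction that $B_s\neq 0$ and let $n^{*}$ be the smallest $n$ with $b_n:=[z^n]B_s\neq 0$. Then the $z^{n^{*}}$-coefficient of $R_s$ equals $b_{n^{*}}\cdot\overline{Q_s}$, whose $u$-degree is $\deg_u b_{n^{*}}+d_s\geq d_s$, whereas $\deg_u R_s<d_s:=\deg_u Q_s$: contradiction. Hence $B_s=0$ and $A=S_s+PU/Q_l$, whose only poles in $u$ are the (large) roots of $Q_l$.

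The hardest step is the factorization $Q=Q_sQ_l$ inside $\mathbb{B}[[z]][u]$, which relies on the Galois-theoretic (or, equivalently, Newton-polygon) input about small roots; once this separation is in place, everything else reduces to the elementary observation that multiplication by a monic polynomial of $u$-degree $d_s$ increases the $u$-degree of any nonzero polynomial by exactly $d_s$.
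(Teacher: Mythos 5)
Your proof is correct, but it takes a genuinely different route from the paper's. The paper's proof is a short, direct argument over the Puiseux ring $\overline{\mathbb{B}}((z^*))$: it factors the denominator of $A$ as $c\,Q_1 Q_2$ via Newton--Puiseux, with $Q_2$ collecting the small roots, and then observes that $P/Q_2 = cAQ_1$ lies simultaneously in $\overline{\mathbb{B}}[u]((z^*))$ (polynomial $u$-coefficients) and in $\overline{\mathbb{B}}[u^{-1}]((z^*))$ (after expanding each $(u-v_j)^{-1}$ in powers of $u^{-1}$, which converges $z$-adically because the $v_j$ have positive valuation); these two constraints force $Q_2 \mid P$. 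Your argument shares the initial idea of splitting off the small-pole factor, but then diverges: you use Galois descent to show that this factor $Q_s$ actually lives in $\mathbb{B}[[z]][u]$ (not merely over Puiseux series), compute the resultant $\mathrm{Res}_u(Q_s,Q_l)$ and check it is a unit in $\mathbb{B}[[z]]$, invoke B\'ezout to obtain a partial-fraction decomposition $A = (\text{regular part}) + R_s/Q_s$ with the regular part in $\mathbb{B}[u][[z]]$, and finally kill $R_s/Q_s$ by a clean $u$-degree count on the lowest-order $z$-coefficient. What each approach buys: the paper's is shorter and slicker but tacitly relies on the small roots $v_j$ having \emph{strictly} positive valuation (otherwise $\sum_k u^{-1-k}v_j^k$ would not converge $z$-adically), a point it does not discuss; your version handles valuation-zero roots uniformly, stays entirely within $\mathbb{B}[[z]][u]$ after the factorization step, and makes the ``small part must vanish'' step a purely elementary degree observation rather than an intersection of two expansion rings. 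The price is more scaffolding (Galois stability of valuations, resultants, B\'ezout, the reduction to $\overline{Q_l}$ a nonzero constant). Both proofs are valid; yours is a legitimate, more hands-on alternative.
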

\begin{proof}
By the Newton-Puiseux theorem, we can write $A(u)=\frac{P(u)}{c \cdot Q_1(u)Q_2(u)}$ with $P(u) \in \mathbb{B}[[z]][u]$, $c\in \overline{\mathbb{B}}((z^*))$, $Q_1(u) = \prod_i(1-u u_i)$ and $Q_2(u)=\prod_j (u-v_j)$, where the $u_i$, $v_j$ are \emph{small} Puiseux series over an algebraic closure $\overline{\mathbb{B}}$ of $\mathbb{B}$ and $v_j$ without constant term. Since $P(u)/Q_2(u) = c A(u)Q_1(u)$, and since $\overline{\mathbb{B}}[u]((z^{*}))$ is a ring, we see that $P(u)/Q_2(u)\in \overline{\mathbb{B}}[u]((z^{*}))$ . But since  $1/Q_2(u) = \prod_j \sum_{k\geq0}u^{-1-k}v_i^k$ is in $\overline{\mathbb{B}}[u^{-1}]((z^{*}))$, this is impossible unless $Q_2$ divides $P$ in $\overline{\mathbb{B}}((z^*))[u]$, which concludes the proof.
\end{proof}

We can now give the proof of \Cref{prop:noPoles!}:
\begin{proof}[Proof of \Cref{prop:noPoles!}]
We first claim that the R.H.S. of~\eqref{eq:TutteY} is $uz$-symmetric. In the case $g\geq 2$ this follows by induction, since each term $F_{g_1}F_{g_2}$ is $uz$-symmetric as a product of two $uz$-antisymmetric factors, the term $F^{(2)}_{g-1}$ is $uz$-symmetric using~\Cref{lemma:stabGamma}, and $S$, as any rational fraction in $x$, is symmetric since $x(u)=\frac{u}{(1+zu)^2}$ is symmetric. 
In the case $g=1$, the R.H.S. of~\eqref{eq:TutteY} is equal to $xtF_0^{(2)} + xtS$, so it is enough to see that $F_0^{(2)}$ is $uz$-symmetric. Now, the series $F_0^{(2)}$ is given by the explicit expression:
\begin{align}\label{eq:F02}
 F^{(2)}_{0}  = \frac{u^2 z^2}{(1-uz)^4}. 
\end{align}
This expression can be found in \cite{Eynard:book} (recall what \cite{Eynard:book} calls \textit{bipartite} maps do not coincide with bipartite maps in general, but they coincide in genus $0$, so we can use this result here).
It can also be obtained from direct computations from the explicit expression of $F_0$ given by \Cref{prop:F0}, and it is also easily derived from \cite[Thm. 1]{ColletFusy} (in the case $p=r=2$, with the notation of this reference). Since \eqref{eq:F02} is clearly $uz$-symmetric,  the claim is proved in all cases.

Hence by
\Cref{prop:structY-zero}, $F_g$ is $uz$-antisymmetric, being the quotient of the $uz$-symmetric right-hand side of~\eqref{eq:TutteY} by $Y$. 
Now, by the induction hypothesis and~\Cref{lemma:stabGamma} 
(or by a direct check on~\eqref{eq:F02} in the case $g=1$),
 the R.H.S. of~\eqref{eq:TutteY} is in $\rA[[z]](u)$, and its poles are contained in $\{\pm \frac{1}{z},0\}$. Hence solving~\eqref{eq:TutteY} for $F_g$ and using \Cref{prop:structY}, we deduce that $F_g$ belongs to $\rA[[z]](u)$ and that its only possible poles are $\pm\frac{1}{z}, 0$ and the zeros of $N(u)$.

Now, viewed as a series in $z$, $F_g$ is an element of $\rA[u][[z]]$. Indeed, in the variables $(t,x)$, $F_g$ belongs to $\mathbb{Q}[p_1,\dots,p_K][x][[t]]$ for clear combinatorial reasons, and as explained in \Cref{sec:notation} the change of variables $t,x \leftrightarrow z,u$ preserves the polynomiality of coefficients.
 Therefore by \Cref{lemma:noSmall}, $F_g$ has no small poles. This excludes $0$ and all small zeros of $N(u)$.
Since $F_g$ is $uz$-antisymmetric and since by \Cref{prop:structY-zero}, the transformation $z\leftrightarrow\frac{1}{z^2u}$ exchanges small and large zeros of $N(u)$, this also implies that $F_g$ has no pole at the large zeros of $N(u)$.

The last thing to do is to examine the degree of $F_g$ in $u$. We know that $S$ is a polynomial in $x^{-1}$ of degree at most $K-1$, thus has degree at most $K-1$ in $u$. 
Therefore by induction and~\Cref{lemma:stabGamma}
(or by a direct check on~\eqref{eq:F02} in the case $g=1$)
 the degree in $u$ of the R.H.S. of~\eqref{eq:TutteY} is at most $K-2$. Since the degree of $Y$ is $K-1$, the degree of $F_g$ in $u$ is at most $-1$.

\end{proof}

\begin{rem}\label{rem:oldKernel}\normalfont
Analogues of the previous proposition, stated in similar contexts~\cite[Chap. 3]{Eynard:book} play a crucial role in Eynard's ``topological recursion'' framework. 
To understand the importance of~\Cref{prop:noPoles!}, let us make a historical comparison. The ``traditional'' way of solving~\eqref{eq:TutteY} with the kernel method would be to substitute in~\eqref{eq:TutteY} \emph{all} the small roots of $N(u)$, and use the $(K-1)$ equations thus obtained to eliminate the ``unknown'' polynomial $S$. Not surprisingly, this  approach was historically the first one to be considered, see e.g.~\cite{Gao}. It leads to much weaker rationality statements than the kind of methods we use here, since the cancellations that appear between those $(K-1)$ equations are formidable and very hard to track. As we will see, \Cref{prop:noPoles!} circumvents this problem by showing that we just need to study~\eqref{eq:TutteY} at the \emph{two} points $u=\pm \frac{1}{z}$ rather than at the $(K-1)$ small roots of $N$.
\end{rem}

With \Cref{prop:noPoles!}, we can now apply one of the main idea of the topological recursion, namely that the whole object $F_g$ can be recovered from the expansion of~\eqref{eq:TutteY} at the critical points $u=\pm\frac{1}{z}$. 
In what follows, all generating functions considered are rational functions of the variable $u$ over $\rA[[z]]$. In particular, the notation $F_g(u)$ is a shorthand notation for the series $F_g(t;x;p_1,\dots,p_K)$ considered as an element of $\rA[[z]](u)$ (or even $\mathbb{Q}[p_1,p_2,\dots,p_K][[z]](u)$), \textit{i.e.} $F_g(u) := F_g(t(z),x(z,u),p_1,p_2,\dots)$.
We let $P(u)=\frac{1-uz}{1+uz}$ (the letter $P$ is for ``prefactor''). By \Cref{prop:noPoles!} the rational function $P(u)F_g(u)$ has only poles at $u=\pm\frac{1}{z}$ and has negative degree in~$u$. Therefore, if $u_0$ is some new indeterminate, we can write $P(u_0)F(u_0)$ as the sum of two residues:
\begin{align}\label{eq:residu}
P(u_0)F(u_0) = \mathrm{Res}_{u=\pm\frac{1}{z}} \frac{1}{u_0-u} P(u)F(u).
\end{align}
Note that this equality only relies on the (algebraic) fact that the sum of the residues of a rational function at all poles (including $\infty$) is equal to zero, no complex analysis is required.
Now, multiplying~\eqref{eq:TutteY} by $P(u)$, we find:
$$
P(u)F_g(u) = \frac{xtP(u)H_g(u)}{Y(u)} + \frac{xtP(u)S(x)}{Y(u)}.
$$
with $H_g(u)=F^{(2)}_{g-1}(u)+\sum_{g_1+g_2=g\atop g_1,g_2>0} F_{g_1}(u)F_{g_2}(u).$
Now observe that the second term in the right-hand side has \emph{no} pole at $u=\pm \frac{1}{z}$: indeed the factor $(1-uz)$ in $Y(u)$ simplifies thanks to the prefactor $P(u)$, and $xS(x)$ is a polynomial in $\frac{1}{x}=\frac{(1+uz)^2}{u}$. Returning to~\eqref{eq:residu} we have proved:
\begin{thm}[Topological recursion for bipartite maps]\label{thm:toprec}
The series $F_g(u_0)$ can be computed as:
\begin{align}\label{eq:toprec}
F_g(u_0) = \frac{1}{P(u_0)} \mathrm{Res}_{u=\pm\frac{1}{z}}\frac{P(u)}{u_0-u} \frac{xt}{Y(u)}\left(
F^{(2)}_{g-1}(u)+\sum_{g_1+g_2=g\atop g_1,g_2>0} F_{g_1}(u)F_{g_2}(u)
\right).
\end{align}
\end{thm}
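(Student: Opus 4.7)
The plan is to deduce \Cref{thm:toprec} from \Cref{prop:noPoles!} and \Cref{prop:TutteY} using only the algebraic identity ``the sum of the residues of a rational function over $\mathbb{P}^1$ (including the residue at $\infty$) vanishes''.

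First, I would observe that $P(u)F_g(u)$ is a rational function of $u$ whose only finite poles lie in $\{\pm 1/z\}$: indeed \Cref{prop:noPoles!} says the poles of $F_g$ are contained in that set, and the factor $P(u) = (1-uz)/(1+uz)$ can only affect the order of a pole at $\pm 1/z$. Moreover $F_g$ has negative degree in $u$ by the same proposition, and $P(u)$ is bounded at infinity, so $P(u)F_g(u) = O(1/u)$ as $u \to \infty$. For a fresh indeterminate $u_0$, the function $u \mapsto \frac{P(u)F_g(u)}{u_0-u}$ is therefore $O(1/u^2)$ at infinity (hence has no residue at $\infty$), and its finite poles lie in $\{u_0, 1/z, -1/z\}$. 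Using $\mathrm{Res}_{u=u_0} \frac{P(u)F_g(u)}{u_0-u} = -P(u_0)F_g(u_0)$, the residue identity then gives
\begin{align*}
P(u_0)F_g(u_0) \;=\; \mathrm{Res}_{u=\pm 1/z} \frac{P(u)F_g(u)}{u_0-u}.
\end{align*}

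Second, I would substitute for $P(u)F_g(u)$ the expression obtained by multiplying the kernel form $Y F_g = xt H_g + xt S$ of \Cref{prop:TutteY} (with $H_g := F^{(2)}_{g-1} + \sum_{g_1+g_2=g,\,g_1,g_2>0} F_{g_1}F_{g_2}$) by $P(u)/Y(u)$:
\begin{align*}
P(u)F_g(u) \;=\; \frac{xt\, P(u) H_g(u)}{Y(u)} \;+\; \frac{xt\, P(u) S(x)}{Y(u)}.
\end{align*}
The key point is that by \Cref{prop:structY} one has $P(u)/Y(u) = u^{K-1}(1+\gamma)/N(u)$, so the dangerous factors $(1\pm uz)$ cancel neatly between $P$ and $Y$; furthermore $xS(x)$ is a polynomial in $1/x = (1+uz)^2/u$ and is therefore regular at $u = \pm 1/z$. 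Consequently the residues of the $S$-term at $u = \pm 1/z$ both vanish, and only the $H_g$-term contributes. Dividing by $P(u_0)$ yields precisely \eqref{eq:toprec}.

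The main obstacle, which I would want to pin down carefully, is the implicit input that $N(\pm 1/z)\neq 0$: otherwise the cancellation between $P$ and $Y$ would be incomplete, leaving residual poles at $\pm 1/z$ in the $S$-term and producing unwanted extra contributions to \eqref{eq:toprec}. I expect this non-vanishing to be part of \Cref{prop:structY}/\Cref{prop:structY-zero}, established in \Cref{sec:Y}, since those propositions specify precisely the location of the zeros of $N$. If for some reason it is not built into their statements, one can verify it directly by evaluating $Y = 1 - 2txF_0 - tx\theta$ near $u = \pm 1/z$ using the explicit form of $F_0$ from \Cref{prop:F0} together with the change of variables $x = u/(1+uz)^2$.
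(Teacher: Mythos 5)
Your proposal is essentially the paper's own proof: multiply the kernel form by $P(u)$, invoke the algebraic residue-sum identity for the rational function $P(u)F_g(u)$ (whose only poles are at $\pm 1/z$ with negative degree at infinity, by \Cref{prop:noPoles!}), and then observe that the $S$-term contributes nothing because $P/Y$ kills the $(1-uz)$ in $Y$ and $xS(x)$ is a polynomial in $1/x=(1+uz)^2/u$. Your treatment is, if anything, slightly more careful than the paper's, which does not pause to justify $N(\pm 1/z)\neq 0$.

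On that last point, your worry is well-founded and your guess about where the resolution lives is correct: $N(\pm 1/z)\neq 0$ does follow from \Cref{prop:structY-zero}(2). The points $\pm 1/z$ are fixed by the involution $u\leftrightarrow \frac{1}{z^2 u}$, which exchanges large and small zeros of $N$; since large and small are disjoint classes and $\pm 1/z$ are large, a fixed large zero would have to be simultaneously small, which is impossible. (Equivalently, it is visible from the explicit expansions in \Cref{prop:diffY}: the leading term of $xtP/Y$ at $u=1/z$ is the nonzero constant $\tfrac{1}{4(1-\eta)}$ and at $u=-1/z$ it is a double pole with nonzero coefficient $-\tfrac{1}{1+\zeta}$, both of which encode $N(\pm 1/z)\neq 0$.) So no gap remains; your argument and the paper's coincide.
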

Note that the R.H.S. of~\eqref{eq:toprec} involves only series $F_h$ for $h<g$ and the series $F_{g-1}^{(2)}$, which are covered by the induction hypothesis. This contrasts with~\eqref{eq:TutteY}, where the term $S(x)$ involves small coefficients of $F_g$.

\subsection{Proof of Theorem~\ref{thm:mainRooted}.}
\label{subsec:proofMainRooted}

In order to compute $F_g(u_0)$ from \Cref{thm:toprec}, it is sufficient to be able to compute the expansion of the rational fraction $\frac{H_g(u)}{Y(u)}$ at the points $u=\pm\frac{1}{z}$. The expansion of the product terms $F_{g_1}(u)F_{g_2}(u)$ is well covered by the induction hypothesis, so the main point will be to study the structure of the term $F_{g-1}^{(2)}(u)$, and the derivatives of $Y(u)$ at $u=\pm\frac{1}{z}$.
The first point will require to study closely the action of the operator $\Gamma$ on Greek variables, and the second one requires a specific algebraic work. 
Note also that, in order to close the induction step, we will need to take the projective limit $K\rightarrow\infty$. Therefore, we need to prove not only that the derivatives of  $\frac{H_g(u)}{Y(u)}$ at $u=\pm\frac{1}{z}$ are rational functions in the Greek variables, but also that these functions do not depend on $K$.

In the rest of this section, we apply this program and prove~\Cref{thm:mainRooted}, admitting
two intermediate results (\Cref{prop:diffY}
and~\ref{prop:Gamma-on-Greek-expr} below), whose proofs are reported to \Cref{sec:Y} and~\ref{sec:Gamma}.

The derivatives of $Y(u)$ at the critical points can be studied by explicit computations, which require some algebraic work. This is the place where we see the Greek variables appear. In \Cref{sec:devY} we will prove:
\begin{prop}[expansion of $xtP(u)/Y(u)$ at $u=\pm\frac{1}{z}$]\label{prop:diffY}
The rational function in $u$,  $\frac{xt P(u)}{Y(u)}$,  has the following formal expansions at $u=\pm\frac{1}{z}$:
\begin{align*}
\frac{xt P(u)}{Y(u)}&= \frac1{4(1-\eta)} + \sum_{\alpha, a \geq 2|\alpha|} c'''_{\alpha, a} \frac{\eta_\alpha}{(1-\eta)^{\ell(\alpha) + 1}} (1-uz)^{a} , \\
\frac{xt P(u)}{Y(u)}&= -\frac1{(1+\zeta)(1+uz)^2} + \sum_{\alpha, a \geq 2|\alpha|} c''_{\alpha, a} \frac{\zeta_\alpha}{(1+\zeta)^{\ell(\alpha) + 1}} (1+uz)^{a-2},
\end{align*}
where $c''_{\alpha,a}, c'''_{\alpha, a}$ are computable rational numbers independent of $K$.
\end{prop}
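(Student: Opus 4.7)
The plan is to compute the Taylor expansions of $xt\,P(u)/Y(u)$ at the critical points $u = \pm 1/z$ directly, using the explicit form of $Y$ from Proposition~\ref{prop:structY}, the formula for $F_0$ from Proposition~\ref{prop:F0}, and $\theta = \sum_k p_k/x^k$. Substituting $x = u/(1+uz)^2$, $t = z/(1+\gamma)$ and writing $\tilde Y(u) := Y(u)/(1-uz)$, I obtain
\[
\frac{xt\,P(u)}{Y(u)} \;=\; \frac{uz}{(1+uz)^3\,(1+\gamma)\,\tilde Y(u)}.
\]
By Propositions~\ref{prop:structY}--\ref{prop:structY-zero}, $\tilde Y(u)$ is regular and nonvanishing at $u = 1/z$; at $u = -1/z$, the other fixed point of the involution $u \leftrightarrow 1/(z^2 u)$, $uz$-antisymmetry forces $Y$ to have a simple pole, so $xtP/Y$ develops a double pole there. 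The prefactor $uz/((1+uz)^3(1+\gamma))$ is an explicit $p$-free rational function of $u$ and $z$, so the analytic data at each critical point is carried entirely by $Y$.

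For the expansion at $u = 1/z$, set $v := 1-uz$. Then $\theta(v) = \sum_k p_k z^k (2-v)^{2k}/(1-v)^k$, and Proposition~\ref{prop:F0} yields $(1+\gamma) Y = (1+\gamma) - (1-v) B(v)$, where
\[
B(v) := \frac{2\bigl(1 - \sum_k p_k z^k T_k(1-v)\bigr)}{2-v} \;+\; \sum_k p_k z^k \frac{(2-v)^{2k-2}}{(1-v)^k}.
\]
Direct computation gives $B(0) = 1+\gamma$ (so $Y(1/z) = 0$), while the combinatorial identity $\sum_{\ell=1}^{k-1}(2\ell+1)\binom{2k-1}{k+\ell} = (k-1)\binom{2k-1}{k}$ (provable by telescoping via $(k+\ell)\binom{2k-1}{k+\ell} = (2k-1)\binom{2k-2}{k+\ell-1}$) yields $B(0) - B'(0) = (1-\eta)/2$, which combined with $xt/(1+uz)|_{u=1/z} = 1/(8(1+\gamma))$ produces the leading term $1/(4(1-\eta))$. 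For higher orders, one establishes an inductive generalization showing that $2T_k^{(n)}(1) + n T_k^{(n-1)}(1)$ equals a $K$-independent rational combination of $(k-1) k^i \binom{2k-1}{k}$ for $i \leq n$; coupled with Leibniz expansion of $(2-v)^{2k-2}/(1-v)^k$, this propagates to show that every Taylor coefficient of $(1+\gamma)\tilde Y(v)$ lies in $\mathbb{Q}[\eta, \eta_1, \eta_2, \ldots]$. Formal inversion of $\tilde Y(v)$ by geometric series then produces the denominators $(1-\eta)^{\ell(\alpha)+1}$ together with the numerators $\eta_\alpha$, and the factor $(1+\gamma)$ cancels against the prefactor. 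The bound $a \geq 2|\alpha|$ reflects the fact that $1/x = 4z + z v^2/(1-v)$, so $\theta(v)$ contains only $v^{2j}$-terms and every $\eta_i$-factor costs two powers of $v$.

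The expansion at $u = -1/z$ proceeds analogously in the local coordinate $\tilde v := 1+uz$. A direct computation of $\operatorname{Res}_{\tilde v = 0}(Y)$ from $Y = 1 - 2txF_0 - tx\theta$ yields $2(1 - \sum_k p_k z^k T_k(-1))/(1+\gamma)$, and the companion identity $-T_k(-1) = \frac{k-1}{2k-1}\binom{2k-1}{k}$ (verified for small $k$ and provable by similar telescoping) rewrites this as $2(1+\zeta)/(1+\gamma)$, giving the leading singular term $-1/((1+\zeta)(1+uz)^2)$. Higher coefficients produce the $\zeta_i$'s: the factors $(-2)^{i+1}k(k-1)\cdots(k-i)/((2k-1)(2k-3)\cdots(2k-2i-1))$ defining $\zeta_i$ arise from successive Pochhammer-type differentiations of $(1-\tilde v)^{-k}$ at $\tilde v = 0$, combined with $\binom{2k-1}{k}$ via the analogues of the identities at $u = 1/z$. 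The main obstacle is rigorously establishing the full family of identities needed to express each Taylor coefficient as an explicit $K$-independent combination of Greek variables, together with the tracking of the degree-type bound $a \geq 2|\alpha|$ (resp.\ $a \geq 2|\beta|$); these should follow inductively from the basic telescoping identities, but making the combinations fully explicit requires careful bookkeeping.
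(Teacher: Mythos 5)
Your overall strategy matches the paper's: write $xtP/Y$ in local coordinates at $u=\pm 1/z$, compute the Taylor expansion of the kernel quantity $(1+\gamma)(1+uz)^2/(uz)-(2F_0+\theta)$, show each coefficient is a $K$-independent linear combination of the right Greek variables, then invert by geometric series. Your leading-order computations at both critical points are correct (the telescoping identities you cite for $B(0)-B'(0)$ and for $-T_k(-1)$ are indeed the ones giving $\eta$ and $\zeta$), and your observation that the $\gamma$-contributions in $2F_0+\theta$ cancel against $(1+\gamma)(1+uz)^2/(uz)$ is the correct mechanism.

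However, there is a genuine gap, and you essentially acknowledge it yourself at the end. The entire technical weight of this proposition sits in the claim that \emph{every} higher-order Taylor coefficient lands exactly in the span of $(\eta+\gamma)$ and the $\eta_i$'s (resp.\ $(\zeta-\gamma)$ and the $\zeta_i$'s), with the index bound $i\leq\lfloor(a-1)/2\rfloor$. You sketch an ``inductive generalization'' about $2T_k^{(n)}(1)+nT_k^{(n-1)}(1)$ being a rational combination of $(k-1)k^i\binom{2k-1}{k}$, but you do not prove it, and the situation at $u=-1/z$ (where the $\zeta_i$'s involve the ratios $k(k-1)\cdots(k-i)/((2k-1)\cdots(2k-2i-1))$, not polynomials in $k$) is even less amenable to a direct telescoping induction. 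The paper handles this by a different and more robust device: it collects the relevant binomial sums $\sum_\ell(\ell)_{(a)}\binom{2k}{k+\ell}$ and $\sum_\ell(\ell+a-1)_{(a)}\binom{2k}{k+\ell}$ (and their signed analogues) into generating functions $D_a(y),T_a(y)$, recognizes them via a Dyck-path decomposition as explicit rational functions of $(1-4y)^{-1/2}$, and then reads off membership in the $\Theta$-image of the Greek span from Proposition~\ref{prop:Theta-base}. That generating-function step is what makes the $K$-independence and the index bound fall out cleanly, and it is missing from your argument.

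Your justification of the degree bound $a\geq 2|\alpha|$ also does not hold as stated. You argue that since $1/x=4z+zv^2/(1-v)$ contains only $v^2$-corrections, ``$\theta(v)$ contains only $v^{2j}$-terms.'' But $\theta=\sum_k p_k/x^k$ involves \emph{powers} of $1/x$, and already $(1/x)^2$ produces odd powers of $v$ upon expanding $1/(1-v)$. The parity heuristic fails; the actual bound in the paper comes from the structure $(\frac{1}{\sqrt{1-4y}}\pm 1)^{a-1}$, whose expansion in $s^{\mp 2}$ stops at index $\lfloor(a-1)/2\rfloor$.
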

Note that the theorem above is just a formal way of collecting all the derivatives of $\frac{xt P(u)}{Y(u)}$ at $u=\pm\frac{1}{z}$, we are not interested in convergence at all here.

The next result we admit now, to be proved in~\Cref{sec:actionGamma}, details the action of the operator $\Gamma$ on Greek variables:
\begin{prop} \label{prop:Gamma-on-Greek-expr}
The operator $\Gamma$ is a derivation on $\mathbb{Q}[p_1,p_2,\dots][x][[t]]$, i.e. it satisfies $\Gamma (AB) = A \Gamma B + B \Gamma A$.
Moreover, its action on Greek variables is given by the following expressions.
\begin{align*}
\Gamma \zeta_i &= \frac{s^{-1} - s}{8(1-\eta)s^2} \left( (2i+1)\zeta_i + \sum_{j=1}^{i-1} (-1)^{j-1} \zeta_{i-j} + 4(-1)^i (1+\zeta) \right) \\
&\quad \quad + \frac1{2}(s^{-1} - s)\left( (2i+1)(s^2-1)^i +\sum_{j=1}^{i-1} (-1)^{j-1} (s^2 - 1)^{i-j} + (-1)^i \right) \\
\Gamma \zeta &= \frac{s^{-1} - s}{8(1-\eta)s^2} (\eta + \zeta) + \frac1{8}(s^{-3} - s^{-1} - 2 + 2s) \\
\Gamma \gamma &= \frac{s^{-1} - s}{4(1-\eta)s^2} (\eta + \gamma) + \frac1{4}(s^{-3} - s^{-1}) \\
\Gamma \eta_i &= \frac{s^{-1} - s}{4(1-\eta)s^2} \eta_{i+1} + \frac1{2^{i+3}} \left( (s-s^{-1}) \partial_s \right)^{i+1} (s^{-3}-3s^{-1}+2), 
\end{align*}
where $s=\frac{1-uz}{1+uz}$.
\end{prop}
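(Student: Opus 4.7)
The derivation property of $\Gamma$ is immediate from the definition $\Gamma = \sum_k k x^k \partial/\partial p_k$ as a $\mathbb{Q}[x]$-linear combination of partial derivations, with $t,x$ treated as algebraically independent of the $p_k$'s. All substantive work lies in evaluating $\Gamma$ on the Greek variables, each of which depends on the $p_k$'s both explicitly and implicitly through $z$. The first step, and the engine for everything else, is to compute $\partial z/\partial p_k$ by implicit differentiation of $z = t(1+\gamma)$: using $\sum_j j\binom{2j-1}{j} p_j z^j = \eta + \gamma$ and $t/z = 1/(1+\gamma)$ one isolates $\partial z/\partial p_k = \binom{2k-1}{k} z^{k+1}/(1-\eta)$. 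Summing against $k x^k$ and invoking the Catalan-type identity $\sum_{k\geq 1} k\binom{2k-1}{k} y^k = y/(1-4y)^{3/2}$ together with the elementary relation $1-4xz = s^2$ (a direct consequence of $x = u/(1+uz)^2$ and $s=(1-uz)/(1+uz)$), one obtains
$$\Gamma z = \frac{z(1-s^2)}{4(1-\eta) s^3}.$$

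For a generic Greek-type variable $G = \sum_k g_k p_k z^k$, the chain and product rules give the master decomposition
$$\Gamma G = \Psi_G(xz) + \frac{\mathrm{inner}_G}{1-\eta}\cdot \frac{1-s^2}{4s^3}, \qquad \Psi_G(y) := \sum_k k g_k y^k,\quad \mathrm{inner}_G := \sum_j j g_j p_j z^j.$$
Each of the five cases in the statement then reduces to (i) expressing $\mathrm{inner}_G$ as an explicit linear combination of other Greek variables, and (ii) writing $\Psi_G(xz)$ in closed form in $s$, via $y = (1-s^2)/4$ and the chain rule $y\partial_y = \tfrac{1}{2}(s-s^{-1})\partial_s$. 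For $\gamma$ and $\eta_i$ both points are immediate: $\mathrm{inner}_\gamma = \eta+\gamma$, $\mathrm{inner}_{\eta_i} = \eta_{i+1}$, and $\Psi_{\eta_i}(y) = (y\partial_y)^{i+1}\phi(y)$ where $\phi(y) := \sum_k (k-1)\binom{2k-1}{k} y^k$ evaluates (by combining $\Psi_\gamma$ with a standard Catalan series) to $\tfrac{1}{4}(s^{-3}-3s^{-1}+2)$, yielding the claimed formula directly. For $\zeta$ and $\zeta_i$ the useful simplification is the binomial identity
$$\frac{k(k-1)\cdots(k-i)}{(2k-1)(2k-3)\cdots(2k-2i-1)}\binom{2k-1}{k} = 2^i\binom{2k-2i-2}{k-i-1},$$
which recasts $\zeta_i$ as a shifted central-binomial series and makes $\Psi_{\zeta_i}(xz)$ explicit via $\sum_m \binom{2m}{m} y^m = (1-4y)^{-1/2}$; the case of $\zeta$ follows similarly, with $\mathrm{inner}_\zeta$ pinned down by $\tfrac{k(k-1)}{2k-1}\binom{2k-1}{k} = (k-1)\binom{2k-2}{k-1}$.

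\textbf{Main obstacle.} The delicate step is producing the alternating sums $\sum_{j=1}^{i-1}(-1)^{j-1}\zeta_{i-j}$ and $\sum_{j=1}^{i-1}(-1)^{j-1}(s^2-1)^{i-j}$ appearing in $\Gamma \zeta_i$. I would attack this by induction on $i$, exploiting the coefficient recursion $a_k^{(i)}/a_k^{(i-1)} = -2(k-i)/(2k-2i-1)$ combined with the partial fraction $(k-i)/(2k-2i-1) = \tfrac{1}{2} + \tfrac{1}{2(2k-2i-1)}$. Iterating this decomposition peels the summand of $\mathrm{inner}_{\zeta_i}$ into a multiple of $a_k^{(i)}$ (producing the $(2i+1)\zeta_i$ term), a telescoping cascade of contributions proportional to $a_k^{(i-j)}$ for $j = 1,\dots,i-1$ (producing the alternating $\zeta_{i-j}$ terms), and a residual which must match the boundary term $4(-1)^i(1+\zeta)$; the same decomposition applied to $\Psi_{\zeta_i}$ in closed form in $s$ produces the parallel sum in $(s^2-1)^{i-j}$. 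Tracking the signs and the boundary contribution through the induction is the only step that requires genuine combinatorial care.
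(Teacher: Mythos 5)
Your proposal follows essentially the same path as the paper: the master decomposition $\Gamma G = \Psi_G(xz) + \mathrm{inner}_G\cdot\frac{1-s^2}{4(1-\eta)s^3}$ is exactly the paper's Proposition on the action of $\Gamma$ on linear combinations of Greek variables (with $\mathrm{inner}_G$ playing the role of $DG$ and $\Psi_G(xz)$ that of $\Theta DG$), and your computation of $\partial z/\partial p_k$ and $\Gamma z$ by implicit differentiation matches the paper's. The per-variable computations (expressing $DG$ as a combination of Greek variables and evaluating $\Theta$ on the basis via the $y=(1-s^2)/4$ change of variables) are the same in substance; your proposed induction for the alternating sum in the $\zeta_i$ case is a reasonable way to supply a step the paper itself does not detail.
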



Before proceeding to the full proof of \Cref{thm:mainRooted}, we first introduce two notions of degrees that will be very helpful in our proof: the Greek degree and the pole degree.
First, we let $\mathbb{G}$ be the subring of $\mathbb{Q}(\eta, \zeta, (\eta_i)_{i \geq 1}, (\zeta_i)_{i \geq 1}, uz)$ formed by polynomials in the variables $(1-\eta)^{-1}$, $(1+\zeta)^{-1}$, $(\eta_i)_{i\geq 1}$, $(\zeta_i)_{i\geq 1}$, $(1-uz)^{-1}$, $(1+uz)^{-1}$. Equivalently, we have $\displaystyle \mathbb{G}= \mathbb{Q}\left[\frac{1}{1-\eta}, \frac{1}{1+\zeta}, (\eta_i)_{i \geq 1}, (\zeta_i)_{i \geq 1},s,s^{-1}\right]$, where 
$s=\frac{1-uz}{1+uz}.$ The Greek degree and the pole degrees are defined for elements of $\mathbb{G}$. The degree of a polynomial is defined as the highest degree of a monomial with nonzero coefficient, while the degree of a monomial is defined as the product of the degrees of its factors as follows.
The \emph{Greek degree}, denoted by $\deg_\gamma$, depends only on Greek variables, \textit{i.e.} $\deg_\gamma(s)=0$, and is defined as follows: 
\[ \deg_\gamma(1 - \eta) = \deg_\gamma(1 + \zeta) = \deg_\gamma(\eta_i) = \deg_\gamma(\zeta_i) = 1 \, \mbox{for} \, i \geq 1. \]
The \emph{pole degrees} are defined for each of the two poles $u = \pm 1/z$, and are denoted by $\deg_+$ and $\deg_-$. They depend on both Greek variables and $(1 \pm uz)$ as follows.
\[ \deg_+((1-uz)^{-1}) = 1, \deg_+(\eta_i) = \deg_+(\zeta_i) = 2i \, \mbox{for} \, i \geq 1, \]
\[ \deg_-((1+uz)^{-1}) = 1, \deg_-(\eta_i) = \deg_-(\zeta_i) = 2i \, \mbox{for} \, i \geq 1. \]
We have the following proposition. 
\begin{prop} \label{prop:Gamma-degrees}
For $T \in \mathbb{G}$ which is a monomial in $(1+\zeta)^{-1}$, $(1-\eta)^{-1}$, $(1+uz)^{-1}$, $(1-uz)^{-1}$, $\eta_i$ and $\zeta_i$ for $i \geq 1$, we have that $\Gamma T$ is also in $\mathbb{G}$ and expressed as a sum of terms that are homogeneous in Greek degree, and 
\[ \deg_\gamma(\Gamma T) = \deg_\gamma(T) - 1, \; \deg_+(\Gamma T) \leq \deg_+(T) + 5, \; \deg_-(\Gamma T) \leq \deg_-(T) +1. \]
\end{prop}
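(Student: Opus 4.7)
Since $\Gamma$ is a derivation (Proposition \ref{prop:Gamma-on-Greek-expr}), for a monomial $T = \prod_j T_j^{a_j}$ in the generators of $\mathbb{G}$ the Leibniz rule gives $\Gamma T = \sum_j a_j T_j^{a_j - 1} (\Gamma T_j) \prod_{k \neq j} T_k^{a_k}$. The Greek degree is additive on products and invariant on homogeneous summands of matching degree, and pole degrees are additive on products and subadditive (via $\max$) on sums. It therefore suffices to verify, for each single generator $T_j \in \{(1-\eta)^{-1}, (1+\zeta)^{-1}, (1-uz)^{-1}, (1+uz)^{-1}, \eta_i, \zeta_i\}$, that $\Gamma T_j \in \mathbb{G}$, that $\Gamma T_j$ is Greek-homogeneous of degree $\deg_\gamma(T_j) - 1$, and that $\deg_+(\Gamma T_j) \leq \deg_+(T_j) + 5$ and $\deg_-(\Gamma T_j) \leq \deg_-(T_j) + 1$.

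For the ``purely Greek'' generators $\eta_i$, $\zeta_i$, $(1-\eta)^{-1}$, $(1+\zeta)^{-1}$, this is a direct verification from the formulas of Proposition \ref{prop:Gamma-on-Greek-expr} (using $\Gamma X^{-1} = -X^{-2}\Gamma X$ for the last two, with $\Gamma \eta$ obtained from the $\Gamma \eta_i$ formula at $i=0$). The mixed expressions $\eta + \zeta$ and the $(1+\zeta)$ appearing inside $\Gamma\zeta_i$ are rewritten as $\mathbb{G}$-elements via identities such as $\eta+\zeta = (1+\zeta) - (1-\eta)$, so that every resulting summand is manifestly a product of generators of $\mathbb{G}$; Greek-homogeneity is then read off term by term, and all summands land at degree $\deg_\gamma(T_j) - 1$. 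For the pole bounds, the worst $s$-power in sight is $s^{-(2i+5)}$, coming from the iterated-derivative piece $\frac{1}{2^{i+3}}((s - s^{-1})\partial_s)^{i+1}(s^{-3} - 3 s^{-1} + 2)$ of $\Gamma\eta_i$; this exactly saturates $\deg_+(\Gamma\eta_i) = \deg_+(\eta_i) + 5$, while every other term contributes strictly less. The $\deg_-$ side is analogous, dominated by the lone linear $s$ term.

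For the generators $(1 \pm uz)^{-1}$, the cleanest route uses the identity $uz = (1-s)/(1+s)$ (equivalent to $s = (1-uz)/(1+uz)$), which gives
\[(1-uz)^{-1} = \tfrac{1}{2}(1 + s^{-1}), \qquad (1+uz)^{-1} = \tfrac{1}{2}(1 + s).\]
Hence both generators lie in $\mathbb{Q}[s, s^{-1}] \subset \mathbb{G}$, and $\Gamma(1-uz)^{-1} = -s^{-2}(\Gamma s)/2$, $\Gamma(1+uz)^{-1} = (\Gamma s)/2$, reducing everything to $\Gamma s$. Differentiating the defining equations $z = t(1+\gamma)$ and $u = x(1+uz)^2$ yields a linear system in $\Gamma z, \Gamma u$; solving and substituting $\Gamma \gamma$ from Proposition \ref{prop:Gamma-on-Greek-expr}, the two $(1+\gamma)^{-1}$ contributions cancel exactly (both reduce to $\pm(1-s^2)(s^{-4}-s^{-2})/(8(1+\gamma))$), leaving
\[\Gamma s \;=\; -\frac{(1-s^2)^2}{8\, s^4\, (1-\eta)} \;\in\; \mathbb{G}.\]
From this one reads $\deg_\gamma(\Gamma s) = -1$, $\deg_+(\Gamma s) = 4$, $\deg_-(\Gamma s) = 0$; multiplying by the $s^{\pm 2}$ prefactors then gives the required bounds for $(1 \pm uz)^{-1}$, saturating $\deg_+ = 6 = 1 + 5$ for $(1-uz)^{-1}$ and staying strictly below otherwise.

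The main obstacle is the $(1+\gamma)^{-1}$ cancellation in the computation of $\Gamma s$: since $\gamma$ is \emph{not} a generator of $\mathbb{G}$, the fact that it disappears from the final answer is what places $\Gamma s$, and hence $\Gamma(1 \pm uz)^{-1}$, inside $\mathbb{G}$. Once that cancellation is established, all remaining verifications are routine degree bookkeeping from the explicit formulas.
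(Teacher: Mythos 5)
Your proof is correct and follows essentially the same route as the paper's: decompose $\Gamma$ as a derivation acting generator by generator, then do term-by-term degree bookkeeping against the explicit formulas of Propositions~\ref{prop:Gamma-on-Greek-expr} and~\ref{prop:Gamma-on-uz}. The only cosmetic difference is that you handle $(1\pm uz)^{-1}$ by rewriting them as $\tfrac12(1+s^{\mp 1})$ and pushing everything through $\Gamma s$ (which you re-derive from scratch, though it is already given by Proposition~\ref{prop:Gamma-on-uz}), whereas the paper organizes the same contribution through $\Gamma(uz)$ and $\partial/\partial(uz)$; both are equivalent. Two tiny imprecisions that do not affect the bounds: for $\Gamma\eta_i$ the term $\frac{s^{-1}-s}{4(1-\eta)s^2}\eta_{i+1}$ also attains the extremal $\deg_+=2i+5$, not only the iterated-derivative piece, and on the $\deg_-$ side the saturating contribution for $\Gamma\eta_i$ and $\Gamma\zeta_i$ comes from the $\eta_{i+1}$ (resp.\ $\zeta_i$) factor with $\deg_-=2i+1$, not from a ``lone linear $s$ term.''
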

\begin{proof}
Since $\Gamma$ is derivative, we have the following expression for $\Gamma T$.
\[ \Gamma T = (\Gamma uz)\frac{\partial}{\partial (uz)} T + (\Gamma \zeta) \frac{\partial}{\partial \zeta} T + (\Gamma \eta) \frac{\partial}{\partial \eta} T + \sum_{i \geq 1} (\Gamma \eta_i) \frac{\partial}{\partial \eta_i} T + \sum_{i \geq 1} (\Gamma \zeta_i) \frac{\partial}{\partial \zeta_i} T \]

With \Cref{prop:Gamma-on-Greek-expr}, we verify that $\Gamma T \in \mathbb{G}$. For the rest of the proposition, it suffices to analyse each term for each type of degree.

We will start by the Greek degree. According to \Cref{prop:Gamma-on-uz}, $\deg_\gamma(\Gamma uz) = -1$ and $\frac{\partial}{\partial(uz)}$ does not change the Greek degree. The net effect is a $-1$ on the Greek degree. For any $\nu$ that is a Greek variable, according to \Cref{prop:Gamma-on-Greek-expr}, we have $\Gamma \nu$ to be a sum of terms all of Greek degree $0$, while $\partial / \partial \nu$ decreases the Greek degree by $1$, thus the net effect is $-1$ on the Greek degree. Therefore, $\deg_\gamma(\Gamma T) = \deg_\gamma(T) - 1$.

The pole degree is more complicated. We now discuss $\deg_+$ and $\deg_-$ seperately, starting by $\deg_+$. To simplify the degree counting using expressions in \Cref{prop:Gamma-on-Greek-expr}, we recall that $s = (1-uz)/(1+uz)$, thus $\deg_+(s) = -1$ and $\deg_-(s)=1$.

We first observe that $\deg_+(\Gamma uz) = 4$ and $\partial / \partial(uz)$ will increase the pole degree $\deg_+$ by $1$, thus the net effect of this term is $5$. For terms involving Greek variables, we observe that $\deg_+(\Gamma \zeta) = 3$, and $\deg_+(\Gamma \eta) = 5$, and their corresponding differentiation does not alter the pole degree $\deg_+$, resulting in a net effect of at most $5$. For $\zeta_i$, their differentiation can decrease $\deg_+$ by $2i$ by removing a factor $\zeta_i$, but it is compensated by $\deg_+(\Gamma \zeta_i) = 2i+3$, which gives a net effect of $3$. For $\eta_i$, similarly to $\zeta_i$, their differentiation decreases $\deg_+$ by $2i$, but again $\deg_+(\Gamma \eta_i) = 2i+5$, giving a net effect of $5$. Combining all results, we have $\deg_+(\Gamma T) \leq \deg_+(T) + 5$.

We now deal with $\deg_-$. We observe that $\deg_-(\Gamma uz) = -2$ and $\partial / \partial (uz)$ increases the pole degree $\deg_-$ by $1$, and the net effect of this term is $-1$. For terms involving Greek variables, we observe that $\deg_-(\Gamma \zeta) = 1$, and $\deg_-(\Gamma \eta) = -1$, while their corresponding differentiation has no effect on $\deg_-$, and the net effect is at most an increase by $1$. For $\eta_i$ and $\zeta_i$, their differentiation decreases $\deg_-$ by $2i$ by removing a factor $\eta_i$ or $\zeta_i$, but $\deg_-(\Gamma \eta_i) = \deg_-(\Gamma \zeta_i) = 2i+1$, thus the net effect is also an increase by $1$. Therefore, $\deg_-(\Gamma T) \leq \deg_-(T) + 1$.
\end{proof}


We can now prove our first main result (up to the proofs that have been omitted in what precedes, and that will be adressed in the next sections).
\begin{proof}[Proof of \Cref{thm:mainRooted}]
We prove the theorem by induction on the genus $g \geq 1$.

We consider~\eqref{eq:toprec} in \Cref{thm:toprec}. \Cref{prop:diffY} implies that all terms in the expansion of $xtP(u)/Y(u)$ at $u=\pm z^{-1}$,  are rational fractions in the Greek variables, with denominator of the form $(1-\eta)^a (1+\zeta)^b$ for $a, b, \geq 0$. Moreover these terms do not depend on $K$ (when written in the Greek variables). 
When $g\geq 2$, 
from the induction hypothesis and
Proposition~\ref{prop:Gamma-on-Greek-expr},
 the quantity $H_g$ is a rational fraction in $u,z$ and the Greek variables, with denominator of the form $(1-\eta)^a (1+\zeta)^b (1 \pm uz)^c$ for $a,b,c\geq 0$. This rational function does not depend on $K$ (when written in the Greek variables).
 The same is true for $g=1$ using the explicit expression of $F_0^{(2)}$ given by~\eqref{eq:F02}.
Therefore, the evaluation of each residue in~\eqref{eq:toprec} is a rational function of Greek variables, independent of $K$, and with denominator of the form $(1-\eta)^a (1+\zeta)^b (1 \pm uz)^c$, with $a,b,c\geq 0$.


\smallskip

In order to prove \Cref{thm:mainRooted}, we now need to prove that $F_g$ is the sum of terms whose Greek degree $\deg_\gamma$ is at most $1-2g$, and that the pole degrees of $F_g$ verify $\deg_+(F_g) \leq 6g-1$ and $\deg_-(F_g) \leq 2g-1$. 
Note that from the induction hypothesis, for all $g'$ such that $1 \leq g' < g$, the series $F_{g'}$ verifies the degree conditions above.

We first look at $H_g$, in the case $g\geq 2$. It has two parts: the sum part, which is $\sum_{g' = 1}^{g-1} F_{g'} F_{g-g'}$, and the operator part, which is $\Gamma F_{g-1}$. We analyse the degree for both parts. For the sum part, it is easy to see that any term $T$ in the sum is homogeneously of Greek degree $\deg_\gamma(T) = 2 - 2g$, and the pole degrees verify $\deg_+(T) \leq 6g-2$ and $\deg_-(T) \leq 2g-2$. For the operator part, it results from \Cref{prop:Gamma-degrees} that $\Gamma F_{g-1}$ is a sum of terms $T$ homogeneously with Greek degree $2-2g$, and $\deg_+(\Gamma F_{g-1}) \leq 6g-2$, $\deg_-(\Gamma F_{g-1}) \leq 2g-2$. Therefore, the result from the sum part and the operator part agrees, thus $H_g$ verifies the same conditions as its two parts. 
For $g=1$, the same bound holds, as one can check from the explicit expression of $H_1=xtF_0^{(2)}$ following from~\eqref{eq:F02}.

We now observe from \Cref{prop:diffY} that all terms appearing in the expansion of $xtP/Y$ at $u\pm \frac{1}{z}$ are homogeneously of Greek degree $-1$. Therefore all the terms in the expansion of $xtPH_g/Y$ at $u=\pm \frac{1}{z}$,  have Greek degree $\deg_\gamma(H_g) + \deg_\gamma(xtP/Y) = 1-2g$. For the pole degrees,  we notice from \Cref{prop:diffY} that $\deg_+(xtP/Y) \leq 0$ and $\deg_-(xtP/Y) \leq 2$. Similar to the Greek degree, counting also the contribution from $P$, we have $\deg_+(F_g) = \deg_+(H_g) + \deg_+(xtP/Y) + 1 \leq 6g-1$ and $\deg_-(F_g) = \deg_-(H_g) + \deg_-(xtP/Y) - 1 \leq 2g-1$, and we complete the induction step.


We thus have proved that, under the specialization $p_i=0$ for $i>K$, the series $F_g$ has the form stated in \Cref{thm:mainRooted}. But, since the numbers $d_{a,b,c,\pm}^{\alpha,\beta}$ do not depend on $K$, we can let $K\rightarrow \infty$ in \eqref{eq:mainRooted} and conclude that this equality holds without considering this specialization. This concludes the proof of \Cref{thm:mainRooted}.
\end{proof}

\subsubsection*{Overview of omitted proofs.}
We have just proved \Cref{thm:mainRooted}, but we have admitted several intermediate statements in order (we hope) to make the global structure of the proof appear more clearly. All these statements will be proved in Section~\ref{sec:Gamma} and \ref{sec:Y}. In order to help the reader check that we do not forget any proof(!), we list here the statements admitted so far, and indicate  where their proofs belong:
\begin{itemize}
\item[$\bullet$] \Cref{prop:Gamma-on-Greek-expr} and \Cref{lemma:stabGamma}, that deal with the action of the operator $\Gamma$, are proved at the end of Section~\ref{sec:Gamma}. The rest of Section~\ref{sec:Gamma} contains other propositions and lemmas that prepare these proofs.
\item[$\bullet$] \Cref{prop:structY} is proved in \Cref{sec:proof-prop-structY}, where we also prove of \Cref{prop:structY-zero}.
\item[$\bullet$] \Cref{prop:diffY} is proved in \Cref{sec:devY}. This proof is rather long, especially because we choose to evaluate the generating functions with a combinatorial viewpoint, but essentially amounts to explicit computations using the explicit expression of the series $F_0$.
\end{itemize}
\noindent Therefore at the end of \Cref{sec:Gamma} and~\ref{sec:Y}, the proof of \Cref{thm:mainRooted} will be complete (without omissions). The two remaining statements (\Cref{thm:mainRooted} and \ref{thm:unrootedGenus1}) will be deduced from \Cref{thm:mainRooted} in \Cref{sec:unrooting}.

\section{Structure of the Greek variables and action of the operator $\Gamma$.}
\label{sec:Gamma}

In this section we establish several properties of the Greek variables defined in Section~\ref{sec:defsMain}. In particular we will prove Proposition~\ref{prop:Gamma-on-Greek-expr} and Lemma~\ref{lemma:stabGamma}. We also fix some notation that will be used in the rest of the paper.

\subsection{Properties of the Greek variables and their $\Theta$-images}
\label{subsec:defRingsOperators}

\newcommand{\greeks}{\mathcal{G}}
 We start by fixing some notations and defining some spaces and operators that will be used throughout the rest of the paper.
First we let $\greeks:=\{\gamma, \eta, \zeta, (\eta_i)_{i\geq 1}, (\zeta_i)_{i \geq 1}\}$ be the set of all Greek variables defined in \Cref{thm:mainUnrooted}.
Elements of $\greeks$ are infinite linear combinations of $p_k z^k$. Acting on such objects, we first define the linear operators:
\begin{align}\label{eq:defTheta}
\Theta:& ~ p_k z^k\mapsto x^k z^k,\\
D:& ~ p_k z^k \mapsto k p_k z^k.
\end{align}
Recall that the variable $z\equiv z(t;p_1,p_2,\dots)$ defined by~\eqref{eq:z} is an element of $\mathbb{Q}[p_1,p_2,\dots][[t]]$ without constant term. Therefore, each formal power series $A\in\mathbb{Q}[x,p_1,p_2,\dots][[z]]$ is an element of $\mathbb{Q}[x,p_1,p_2,\dots][[t]]$. Recall that, on this ring, the operator $\Gamma$ is defined by:
$$
\Gamma = \sum_{k\geq 1}k x^k \frac{\partial}{\partial p_k}, 
$$
where $\frac{\partial}{\partial p_k}$ is the partial differentiation with respect to $p_k$ on $\mathbb{Q}[x,p_1,p_2,\dots][[t]]$. We now introduce another operator $\partial_{p_k}$, given by the partial differentiation with respect to $p_k$ on $\mathbb{Q}[x,p_1,p_2,\dots][[z]]$ \textit{omitting the dependency of $z$ in $p_k$}. Equivalently, $\partial_{p_k}$ is defined on  $\mathbb{Q}[x,p_1,p_2,\dots][[z]]$ by the formula:
\begin{align}\label{eq:defpartialpk}
\frac{\partial}{\partial p_k} = \frac{\partial z}{\partial p_k} \frac{\partial}{\partial z} +  \partial_{p_k}.
\end{align}

Our first statement deals with the action of $\Theta$ on elements of $\greeks$. Here and later it will be convenient to work with the variable $s$ defined by:
\begin{align}\label{def:s}
s := \frac{1-uz}{1+uz}.
\end{align}
\begin{prop}
 \label{prop:Theta-base}
 The action of the operator $\Theta$ on elements of $\greeks$ is given by:
\begin{align*}
 \Theta \gamma &= \frac1{2}(s^{-1} - 1) , &
 \Theta \eta &=  \frac1{4}(s^{-3} - 3s^{-1} + 2), \\ 
 \Theta \zeta &= \frac1{4}(s+s^{-1}) - \frac1{2}, &
 \Theta \zeta_i &= (s^{-1} - s) (s^2 - 1)^i, ~ i\geq 1, \\
 \Theta \eta_i &=  \frac1{2^{i+2}} \left((s - s^{-1}) \frac{\partial}{\partial s}\right)^i (s^{-3} - 3s^{-1} + 2), ~ i\geq 1. 
\end{align*}
In particular, the images $\Theta (\eta + \gamma)$, $\Theta (\zeta - \gamma)$, $\Theta \eta_i$, $\Theta \zeta_i$ for $i \geq 1$ span the vector space $(s^{-1}-s)\mathbb{Q}[s^2, s^{-2}]$.
\end{prop}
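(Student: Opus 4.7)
The plan is to compute $\Theta\nu$ for each $\nu\in\greeks$ by viewing $\Theta$ as the substitution $p_kz^k\mapsto y^k$ with $y:=xz$, and then to rewrite everything in the variable $s$ using the identity $4y=1-s^2$ (equivalently $s=\sqrt{1-4y}$), which itself follows from $x=u/(1+uz)^2$ together with the definition $s=(1-uz)/(1+uz)$.

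The base case comes from the generating function of central binomial coefficients: using $\binom{2k-1}{k}=\tfrac{1}{2}\binom{2k}{k}$,
\[ \Theta\gamma=\sum_{k\ge 1}\binom{2k-1}{k}y^k=\tfrac{1}{2}\bigl((1-4y)^{-1/2}-1\bigr)=\tfrac{1}{2}(s^{-1}-1). \]
For the $\eta$-family I would exploit the operator $D$ defined alongside $\Theta$: since $\eta=D\gamma-\gamma$ and $\eta_i=D^i\eta$, and since $\Theta\circ D=(y\partial_y)\circ\Theta$, it suffices to express the Euler operator in the variable~$s$. From $dy/ds=-s/2$ one obtains $y\partial_y=\tfrac{1}{2}(s-s^{-1})\partial_s$, and iterating gives the claimed formula for $\Theta\eta_i$, with the case $i=0$ recovering $\Theta\eta$.

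For the $\zeta$-family the key step is the factorial identity
\[ \frac{k(k-1)\cdots(k-i)}{(2k-1)(2k-3)\cdots(2k-2i-1)}\binom{2k-1}{k}=2^{i}\binom{2(k-i-1)}{k-i-1}, \]
which I would establish by writing the odd product as $(2k-1)!/[(2k-2i-2)!\cdot 2^{i}(k-1)!/(k-i-1)!]$ (the denominator being the missing even factors) and simplifying. After the index shift $j=k-i-1$, this yields $\Theta\zeta_i$ as a scalar multiple of $y^{i+1}(1-4y)^{-1/2}$, which in $s$ becomes a scalar multiple of $(s^{-1}-s)(s^2-1)^i$. The formula for $\Theta\zeta$ follows from the same circle of ideas: one computes $\sum_k\frac{k}{2k-1}\binom{2k-1}{k}y^k=y/s$ via $\frac{k}{2k-1}\binom{2k-1}{k}=\binom{2k-2}{k-1}$, and $\sum_k\frac{1}{2k-1}\binom{2k-1}{k}y^k=(1-s)/2$ via $\frac{1}{2k-1}\binom{2k-1}{k}=C_{k-1}$ and the Catalan generating function; subtracting and rewriting in $s$ yields the claimed expression.

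For the spanning statement, each listed image lies in $(s^{-1}-s)\mathbb{Q}[s^2,s^{-2}]$: for $\Theta(\zeta-\gamma)$, $\Theta(\eta+\gamma)=\tfrac{1}{4}(s^{-1}-s)s^{-2}$, and the $\Theta\zeta_i$ this is immediate from the closed forms, and for the $\Theta\eta_i$ it reduces to checking that $(s-s^{-1})\partial_s$ preserves this subspace, a one-line computation on a general element $(s^{-1}-s)q(s^2)$. For the reverse inclusion I would argue by triangularity in two halves. The family $\{\Theta(\zeta-\gamma),\Theta\zeta_1,\Theta\zeta_2,\ldots\}$ consists of nonzero scalar multiples of $(s^{-1}-s)(s^2-1)^i$ for $i\ge 0$, triangular in the degree of $(s^2-1)$, and so spans $(s^{-1}-s)\mathbb{Q}[s^2]$. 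On the other side, starting from $\Theta(\eta+\gamma)\propto(s^{-1}-s)s^{-2}$, each application of $(s-s^{-1})\partial_s$ strictly lowers the minimum exponent of $s$ by $2$, so $\{\Theta(\eta+\gamma),\Theta\eta_1,\Theta\eta_2,\ldots\}$ is triangular and spans $(s^{-1}-s)s^{-2}\mathbb{Q}[s^{-2}]$. The two halves together cover all of $(s^{-1}-s)\mathbb{Q}[s^2,s^{-2}]$. The main technical obstacle is the factorial identity for $\zeta_i$; once that is established the rest is routine substitution and a clean triangularity argument.
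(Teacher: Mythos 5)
Your approach is essentially the same as the paper's: both pass to the variable $y=xz$ with $s^2=1-4y$, compute $\Theta\gamma$ via central binomial coefficients, handle the $\eta$-family through $\Theta\circ D=(y\partial_y)\circ\Theta=\tfrac12(s-s^{-1})\partial_s\circ\Theta$, use Catalan-type manipulations for $\Theta\zeta$, and close with the same triangularity argument for the spanning claim. Your factorial identity
\[
\frac{k(k-1)\cdots(k-i)}{(2k-1)(2k-3)\cdots(2k-2i-1)}\binom{2k-1}{k}=2^i\binom{2(k-i-1)}{k-i-1}
\]
is correct (strip the even factors $2^i(k-1)!/(k-i-1)!$ out of $(2k-1)!/(2k-2i-2)!$); the paper instead extracts $[x^k]$ of $(s^{-1}-s)(s^2-1)^i$ via Lagrange inversion, which is the same computation run backwards.

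The one genuine gap is that for $\Theta\zeta_i$ you leave the proportionality constant unspecified, saying only ``a scalar multiple of $(s^{-1}-s)(s^2-1)^i$.'' Since the proposition asserts an exact identity, that constant must be pinned down. If you carry it out with your identity you get
\[
\Theta\zeta_i=(-2)^{i+1}\,2^i\,y^{i+1}(1-4y)^{-1/2}=(-1)^{i+1}2^{2i+1}\,y^{i+1}s^{-1}=(-1)^{i+1}\tfrac12(1-s^2)^{i+1}s^{-1}=\tfrac12(s-s^{-1})(s^2-1)^i,
\]
which is $-\tfrac12$ times the printed formula. A direct numerical check confirms this: for $i=1,k=2$ the defining coefficient of $p_2z^2$ in $\zeta_1$ is $8$, while $[x^2z^2](s^{-1}-s)(s^2-1)=-16$. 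So the statement as printed carries a sign/factor slip (and the paper's own computation of $[x^k](s^{-1}-s)(s^2-1)^i$ writes $(-1)^{i+1}2^{2i+1}$ where the arithmetic actually gives $(-1)^i 2^{2i+2}$). This does not affect the spanning conclusion, since scalar multiples do not change the span, but you should determine the constant explicitly rather than leave it floating --- and in doing so you would uncover this discrepancy.
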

\begin{proof}
The proof is elementary but let us sketch the computations that are not totally obvious if not performed in a good way. We observe, and will use several times, that by the Lagrange inversion formula, one has $[x^\ell]s =
 -\frac{2}{\ell} {2\ell-2 \choose \ell -1}z^\ell$ for any $\ell \geq 1$.

\noindent $\bullet$
By definition we have $\Theta \gamma = \sum_{k \geq 1} \binom{2k-1}{k} x^k z^k$, so to prove the first equality we need to show that for $k \geq 1$ one has $[x^k]\frac1{2}s^{-1}={2k-1\choose k}z^k$.
For this, we first observe by a direct computation that $s^2=1-4xz$, which implies that $2x\frac{\partial}{\partial x}s=s-s^{-1}$. It follows that 
$[x^k]\frac1{2}s^{-1}=
[x^k]\frac1{2}( s- 2x\frac{\partial}{\partial x}s)=
(1-2k)[x^k] s$, which is equal to ${2k-1 \choose k}z^k$ from the observation above.
The value of $\Theta \zeta$ is easily checked similarly, namely $[x^k](s+s^{-1})/4 = [x^k](s-x\frac{\partial}{\partial x}s)/2 = \frac{1-k}{2}[x^k]s=\frac{k-1}{2k-1}{2k-1 \choose k} z^k$.

To check the value of $\Theta \zeta_i$, we observe again that $s^2-1=-4xz$, so that $[x^k](s^{-1}-s)(s^2-1)^i=(-4z)^i[x^{k-i}](s^{-1}-s)$. 
Using again that $2x\frac{\partial}{\partial x}s=s-s^{-1}$, this is equal to $ (-4z)^i \cdot 2(i-k) [x^{k-i}]s$, which equals $(-1)^{i+1} 2^{2i+1} {2k-2i-2 \choose k-i-1} z^k$.
This quantity can be rewritten as $\frac{(-2)^{i+1}k(k-1)\dots(k-i)}{(2k-1)(2k-3)\dots(2k-2i-1)}{2k-1\choose k} z^k$ that agrees with what we expect from the definition of $\zeta_i$.

To compute $\Theta \eta$ and $\Theta \eta_i$, we first notice that 
$\Theta D = x\frac{\partial}{\partial x}\Theta$,
 and we observe that 
\[\eta = D \gamma - \gamma,\; \eta_1 = D \eta, \; \eta_i = D \eta_{i-1}. \]
We can then compute the action of $\Theta$ on these variables.
\[ \Theta \eta = \left( x\frac{\partial}{\partial x} - Id \right) \Theta \gamma = \frac1{4}(s^{-3} - 3s^{-1} + 2) \]
\[ \Theta \eta_i = \left( x\frac{\partial}{\partial x} \right)^i \Theta \eta = \frac1{2^{i+2}} \left( (s - s^{-1}) \frac{\partial}{\partial s} \right)^i (s^{-3} - 3s^{-1} + 2) \]


\noindent$\bullet$
We now prove the last statement of the proposition.
We have $\Theta (\zeta - \gamma) = (s-s^{-1})/4$ and $\Theta \zeta_i = (s^{-1} - s) (s^2 - 1)^i$ of degree $2i+1$ in $s$, and they form a triangular basis of $(s^{-1}-s)\mathbb{Q}[s^{2}]$. We also observe that $\Theta (\eta + \gamma) = (s-s^{-1}) s^{-2}/4$ and $\Theta \eta_i$ is in $(s^{-1}-s)s^{-2}\mathbb{Q}[s^{-2}]$ of degree $2i+1$ in $s^{-1}$, and also that they form a triangular basis for $(s^{-1}-s)s^{-2}\mathbb{Q}[s^{-2}]$. This proves that altogether these variables span the whole desired space.
\end{proof}

The next proposition collects some partial derivatives of our main variables that will be useful afterwards.
\begin{prop} \label{prop:diff-of-vars}
We have the following expressions of partial derivatives of the variable sets $t,x$ and $z,u$:
\begin{align*}
\frac{\partial u}{\partial x} = \frac{(1+uz)^{3}}{1-uz}, &\quad \frac{\partial z}{\partial t} = \frac{(1+\gamma)^2}{1-\eta}, \\
\frac{\partial u}{\partial t} = \frac{2(1+\gamma)^{2}u^{2}}{(1-\eta)(1-uz)}, &\quad \frac{\partial z}{\partial x} = 0 \\
\frac{\partial z}{\partial p_k} = \frac{\binom{2k-1}{k} z^{k+1}}{1-\eta}, &\quad \frac{\partial u}{\partial p_k} = \frac{2 u^2 \binom{2k-1}{k}z^{k+1}}{(1-uz)(1-\eta)} \\
\end{align*}
\end{prop}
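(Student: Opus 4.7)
The proof is a routine exercise in implicit differentiation applied to the two defining equations
\begin{equation*}
z=t(1+\gamma), \qquad u=x(1+zu)^2,
\end{equation*}
so my plan is simply to organize the computations carefully, making the choice of independent variables (namely $t$, $x$, and the $p_k$'s) explicit at each stage.

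First I would note the obvious fact $\partial z/\partial x=0$, since $z\in\mathbb{Q}[p_1,p_2,\dots][[t]]$ by definition. To compute $\partial z/\partial t$, I would apply $\partial/\partial t$ (with the $p_k$'s held fixed) to $z=t(1+\gamma)$. The key ingredient is the identity
\begin{equation*}
z\,\partial_z\gamma = \sum_{k\geq 1}k\binom{2k-1}{k}p_k z^k = \eta+\gamma,
\end{equation*}
which follows immediately from the definitions of $\gamma$ and $\eta$. Substituting $t/z = 1/(1+\gamma)$ in the resulting implicit equation and solving gives $\partial z/\partial t = (1+\gamma)^2/(1-\eta)$. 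The same technique applied with $\partial/\partial p_k$ (using that $\partial\gamma/\partial p_k = \binom{2k-1}{k}z^k$ plus the chain contribution through $z$) yields $\partial z/\partial p_k = \binom{2k-1}{k}z^{k+1}/(1-\eta)$.

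For the derivatives of $u$, I would use the equation $u=x(1+zu)^2$. The trick that will recur in all three cases is to use $u=x(1+zu)^2$ itself to rewrite $2x(1+zu) = 2u/(1+zu)$, which then produces the factor
\begin{equation*}
1 - \frac{2zu}{1+zu} = \frac{1-uz}{1+uz}
\end{equation*}
on the left-hand side after collecting terms. Differentiating with respect to $x$ (noting $\partial z/\partial x=0$) gives $\partial u/\partial x = (1+uz)^3/(1-uz)$. Differentiating with respect to $t$ and $p_k$ respectively introduces the previously computed $\partial z/\partial t$ and $\partial z/\partial p_k$ through the chain rule, and the same simplification produces the stated expressions for $\partial u/\partial t$ and $\partial u/\partial p_k$.

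There is no real obstacle: the only care needed is to keep track of which variables are held fixed (the pair $(t,x,p_1,p_2,\dots)$ are the independent variables), and to systematically use the two algebraic identities $z\partial_z\gamma=\eta+\gamma$ and $1-2xz(1+zu)=(1-uz)/(1+uz)$ to close the implicit equations.
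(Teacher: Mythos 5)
Your proof is correct and follows exactly the route the paper intends: the paper's own proof consists solely of the remark ``The proof is a simple check from the definitions, via implicit differentiation,'' and your write-up supplies precisely those computations, including the two helpful identities $z\,\partial_z\gamma=\eta+\gamma$ and $1-2xz(1+zu)=(1-uz)/(1+uz)$ that close the implicit equations.
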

\begin{proof}
The proof is a simple check from the definitions, via implicit differentiation. 
\end{proof}

\subsection{Action of $\Gamma$ and proofs of Proposition~\ref{prop:Gamma-on-Greek-expr} and Lemma~\ref{lemma:stabGamma}}
\label{sec:actionGamma}

We are now ready to study more explicitly the action of $\Gamma$. The next statement is obvious:
\begin{prop} \label{prop:Gamma-derivative}
The operator $\Gamma$ is a derivation, \textit{i.e.} $\Gamma (AB) = A \Gamma B + B \Gamma A$.
\end{prop}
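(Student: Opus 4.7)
The plan is almost trivial here because of how $\Gamma$ is defined. Recall from~\eqref{eq:Gamma} that
\[
\Gamma = \sum_{k\geq 1} k x^k \frac{\partial}{\partial p_k},
\]
viewed as an operator on $\mathbb{Q}[p_1,p_2,\dots][x][[t]]$. Each partial derivative $\frac{\partial}{\partial p_k}$ is, by construction, a derivation on this ring, and the multiplier $kx^k$ involves only the variable $x$, which is independent of all the $p_j$. Therefore multiplying by $kx^k$ commutes with the Leibniz rule, and an arbitrary finite linear combination of derivations over the base field is again a derivation.

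Concretely, I would write, for any $A,B \in \mathbb{Q}[p_1,p_2,\dots][x][[t]]$,
\[
\Gamma(AB) \;=\; \sum_{k\geq 1} k x^k \frac{\partial (AB)}{\partial p_k}
\;=\; \sum_{k\geq 1} k x^k \left( A \frac{\partial B}{\partial p_k} + B\frac{\partial A}{\partial p_k}\right)
\;=\; A\,\Gamma B + B\,\Gamma A,
\]
which is the Leibniz rule. No obstacle is expected; the only subtlety worth noting is that the partial derivative used here is the standard partial on $\mathbb{Q}[p_1,p_2,\dots][x][[t]]$, treating the $p_k$'s as independent indeterminates, so no chain-rule corrections appear. (This is in contrast to $\partial_{p_k}$ introduced in~\eqref{eq:defpartialpk}, which becomes relevant only in the next subsection when Greek variables and the series $z(t;p_1,p_2,\dots)$ enter the picture.)

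Thus the proof is a one-line verification and needs no further machinery.
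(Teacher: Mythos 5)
Your proof is correct and is exactly the argument the paper intends with its one-line "Clear from the definition" proof: $\Gamma$ is a linear combination of the derivations $\frac{\partial}{\partial p_k}$ with coefficients $kx^k$ independent of the $p_j$, so the Leibniz rule follows termwise. The remark distinguishing $\frac{\partial}{\partial p_k}$ from $\partial_{p_k}$ is a sensible clarification but not needed for the proof itself.
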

\begin{proof}
Clear from the definition $\Gamma = \sum_{k \geq 1} k x^k \frac{\partial}{\partial p_k}$.
\end{proof}
The action of $\Gamma$ on variables $u,z,s$ can be examined by direct computation:
\begin{prop} \label{prop:Gamma-on-uz}
We have
\[ \Gamma z = \frac{z s^{-2} (s^{-1} - s)}{4(1-\eta)}, \quad \Gamma u = \frac{u s^{-2} (s^{-1} - 1) (s^{-1} - s)}{4(1-\eta)}, \quad \Gamma s = - \frac{(s^{-1} - s)^2}{8(1-\eta)s^2}  \]
\end{prop}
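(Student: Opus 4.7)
The plan is to compute each of the three quantities directly from the definition $\Gamma = \sum_{k\geq 1} k x^k \frac{\partial}{\partial p_k}$, feeding in the partial derivatives already listed in \Cref{prop:diff-of-vars}. The only nontrivial step is identifying the series $\sum_{k\geq 1} k\binom{2k-1}{k}(xz)^k$ in closed form via the $\Theta$-calculus of \Cref{prop:Theta-base}; once this is done, both $\Gamma z$ and $\Gamma u$ drop out, and $\Gamma s$ is obtained by an algebraic manipulation using the identity $s^{2}=1-4xz$.

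For $\Gamma z$, I would substitute $\frac{\partial z}{\partial p_k}=\binom{2k-1}{k}z^{k+1}/(1-\eta)$ into the definition of $\Gamma$, pull out the common factor $z/(1-\eta)$, and recognise
\[
\sum_{k\geq1} k\binom{2k-1}{k}(xz)^k \;=\; \Theta(\eta+\gamma).
\]
Plugging in the closed forms $\Theta\eta=\tfrac14(s^{-3}-3s^{-1}+2)$ and $\Theta\gamma=\tfrac12(s^{-1}-1)$ from \Cref{prop:Theta-base} and simplifying gives $\Theta(\eta+\gamma)=\tfrac14(s^{-3}-s^{-1})=\tfrac14 s^{-2}(s^{-1}-s)$, yielding the claimed formula for $\Gamma z$. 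The formula for $\Gamma u$ is obtained by the same manipulation starting from $\frac{\partial u}{\partial p_k}=2u^{2}\binom{2k-1}{k}z^{k+1}/((1-uz)(1-\eta))$, and then using the elementary identity $\frac{2uz}{1-uz}=s^{-1}-1$ (immediate from $s^{-1}=(1+uz)/(1-uz)$) to convert the prefactor into the stated form.

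For $\Gamma s$, the shortcut is the identity $s^{2}=1-4xz$, which one checks by substituting $x=u/(1+uz)^{2}$: $1-4xz=((1+uz)^{2}-4uz)/(1+uz)^{2}=((1-uz)/(1+uz))^{2}$. Since $x$ is among the independent variables seen by $\Gamma$, we have $\Gamma x=0$, hence
\[
2s\,\Gamma s \;=\; \Gamma(s^{2}) \;=\; -4x\,\Gamma z.
\]
Substituting the formula for $\Gamma z$ just obtained and rewriting $xz=(1-s^{2})/4=s(s^{-1}-s)/4$, the factors combine into $-(s^{-1}-s)^{2}s^{-2}/(8(1-\eta))$, which is exactly the target expression for $\Gamma s$.

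I do not anticipate a genuine obstacle: the entire proposition is a mechanical computation, and the only subtlety is to avoid the chain-rule confusion inherent to the fact that $z$ itself depends on the $p_k$. Invoking the already-derived $\partial z/\partial p_k$, $\partial u/\partial p_k$ of \Cref{prop:diff-of-vars} sidesteps this; the recognition $\sum k\binom{2k-1}{k}(xz)^k=\Theta(\eta+\gamma)$ is the one conceptual gesture that makes the closed forms collapse, and after that only the manipulation $1-s^{2}=s(s^{-1}-s)$ is needed for $\Gamma s$.
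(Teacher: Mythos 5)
Your proof is correct and follows essentially the same direct-computation route as the paper, hinging on the same key recognition $\sum_k k\binom{2k-1}{k}(xz)^k=\Theta(\eta+\gamma)$ and the closed forms from \Cref{prop:Theta-base}. The only differences are small economies: you read $\partial z/\partial p_k$ and $\partial u/\partial p_k$ straight off \Cref{prop:diff-of-vars} rather than re-deriving the chain-rule/linear-equation step for $\Gamma z$ and implicitly differentiating $x=u(1+uz)^{-2}$ for $\Gamma u$, and you use $s^2=1-4xz$ as a shortcut for $\Gamma s$ where the paper appeals to the quotient rule on $s=(1-uz)/(1+uz)$.
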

\begin{proof}
We proceed by direct computation by recalling the differentials computed in Proposition~\ref{prop:diff-of-vars}.
\begin{align*}
\Gamma z &= \sum_{k \geq 1} k x^k \frac{\partial}{\partial p_k} t \left( 1 + \sum_{m \geq 1} \binom{2m-1}{m} p_m z^m \right) \\
&= \sum_{k \geq 1} k \binom{2k-1}{k} x^k z^k + \frac1{1+\gamma} (\Gamma z) \sum_{k \geq 1} k \binom{2k-1}{k} p_k z^k \\
&= \frac{z}{1+\gamma} \Theta (\gamma + \eta) + \frac1{1+\gamma} (\Gamma z) (\gamma + \eta) 
\end{align*}
By solving this linear equation, we obtain $\Gamma z$.
To obtain $\Gamma u$, we notice that $\Gamma$ is a derivation and apply it to $x=u(1+uz)^2$ to obtain
\[
0 = (\Gamma u) (1+uz)^{-3} (1-uz) - (\Gamma z) 2u^2 (1+uz)^{-3}.
\]
Finally, using the fact that $\Gamma$ is derivation and the expressions of $\Gamma z$ and $\Gamma u$, we easily verify the expression of $\Gamma s$.
\end{proof}

\begin{prop} \label{prop:Gamma-on-Greek}
For $G$ a linear combination of elements of $\greeks$, we have
\[
\Gamma G = \left( \frac{s^{-1} - s}{4(1-\eta)s^2} + \Theta \right) D G
\]
\end{prop}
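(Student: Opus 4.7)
The plan is to reduce the problem to the case of a generic linear combination $G = \sum_{k\geq 1} c_k p_k z^k$: by inspection of the definitions in \Cref{thm:mainUnrooted}, every element of $\greeks$ is of this form, and both sides of the claimed identity are $\mathbb{Q}$-linear in the coefficients $c_k$, so it suffices to carry out the computation once for such a $G$.

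The key step is then to apply the chain-rule splitting \eqref{eq:defpartialpk}, which gives
\[
\Gamma G \;=\; \sum_{k\geq 1} k x^k \Bigl( \tfrac{\partial z}{\partial p_k} \tfrac{\partial G}{\partial z} + \partial_{p_k} G \Bigr).
\]
I would handle the ``direct'' term first: since $\partial_{p_k}$ by definition treats $z$ as independent of $p_k$, we have $\partial_{p_k} G = c_k z^k$, and summing gives $\sum_k k c_k (xz)^k = \Theta(DG)$. This is precisely the $\Theta DG$ contribution on the right-hand side of the statement.

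For the ``chain'' term, I would substitute the explicit formula $\tfrac{\partial z}{\partial p_k} = \binom{2k-1}{k} z^{k+1}/(1-\eta)$ from \Cref{prop:diff-of-vars} together with $\tfrac{\partial G}{\partial z} = DG/z$, so that the common factor $z/(1-\eta)$ and the factor $DG$ come out of the sum. The remaining $k$-sum is $\sum_k k \binom{2k-1}{k} (xz)^k$, which is exactly $\Theta(D\gamma)$; since $D\gamma = \gamma + \eta$ is immediate from the definitions of $\gamma$ and $\eta$, this equals $\Theta(\gamma+\eta)$. Applying \Cref{prop:Theta-base} then gives the simplification $\Theta(\gamma+\eta) = \tfrac14(s^{-3}-s^{-1}) = \tfrac{s^{-1}-s}{4s^{2}}$, and assembling the two pieces produces the identity stated in the proposition.

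There is no substantial obstacle here: the argument is a bookkeeping consequence of the chain rule combined with the already-proved formulas for $\partial z/\partial p_k$ and for the $\Theta$-images of the Greek variables. The only non-obvious step is recognising the algebraic collapse of $\Theta(\gamma+\eta)$ into a single ratio involving $s$, which is a one-line check using \Cref{prop:Theta-base}; once this is in place, both the ``direct'' and ``chain'' contributions line up verbatim with the two summands of the operator $\bigl(\tfrac{s^{-1}-s}{4(1-\eta)s^2} + \Theta\bigr) D$.
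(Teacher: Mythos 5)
Your proof is correct and follows essentially the same route as the paper: split $\Gamma$ via the chain rule \eqref{eq:defpartialpk}, identify the direct contribution as $\Theta DG$, and show the chain contribution collapses to $\frac{s^{-1}-s}{4(1-\eta)s^2}DG$. The paper reaches the chain-term constant by citing $\Gamma z$ from \Cref{prop:Gamma-on-uz}, whereas you recompute that same quantity from $\partial z/\partial p_k$ (\Cref{prop:diff-of-vars}) and $\Theta(\gamma+\eta)$ (\Cref{prop:Theta-base}) — an equivalent, equally valid bookkeeping.
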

\begin{proof}
Since $G$ is a linear combination of Greek variables, it is an infinite linear combination of $p_k z^k$.
Recalling the definition \eqref{eq:defpartialpk} of the operator $\partial_{p_k}$, we have:
\begin{align*}
\Gamma G = \sum_{k \geq 1} k x^k \frac{\partial}{\partial p_k} G &=
 \sum_{k \geq 1} k x^k \frac{\partial z }{\partial p_k} \frac{\partial}{\partial z} G
+ 
 \sum_{k \geq 1}k x^k \partial_{p_k} G \\ &= \sum_{k \geq 1} k x^k \frac{\partial z}{\partial p_k} z^{-1} DG + \Theta D G \\
&= \left( z^{-1} (\Gamma z) + \Theta \right) DG = \left( \frac{s^{-1} - s}{4(1-\eta)s^2} + \Theta \right) DG,
\end{align*}
where the last equality uses the value of $\Gamma z$ given by the previous proposition.
\end{proof}

We are now prepared to prove \Cref{prop:Gamma-on-Greek-expr} and \Cref{lemma:stabGamma}.
\begin{proof}[Proof of \Cref{prop:Gamma-on-Greek-expr}]
The fact that $\Gamma$ is a derivation was proved in \Cref{prop:Gamma-derivative}. To obtain explicit formulas giving the action of $\Gamma$, we use \Cref{prop:Gamma-on-Greek}. For $G \in \greeks$, the value of $DG$ is given by the following list, which is straightforward from the definitions:
\begin{enumerate}
\item $D \gamma = \eta + \gamma$, $D \eta = \eta_1$, $D \zeta = \frac{\eta}{2} + \frac{\zeta}{2}$
\item $D \eta_i = \eta_{i+1}$
\item $D \zeta_i = \frac{1}{2}\left( (2i+1)\zeta_i + \sum_{j=1}^{i-1} (-1)^{j-1} \zeta_{i-j} + 4(-1)^i (\zeta + \eta) \right)$
\end{enumerate}
Since all the quantities appearing in the right-hand-side of these equalities are linear combinations of elements of $\greeks$, their images by $\Theta$ can be computed thanks to \Cref{prop:Theta-base}. Therefore using \Cref{prop:Gamma-on-Greek}, we can compute explicitly the value of $\Gamma G$ for $G\in \greeks$, and doing the algebra leads to the values given in \Cref{prop:Gamma-on-Greek-expr}.
\end{proof}



\begin{proof}[Proof of \Cref{lemma:stabGamma}]
For 
  $A \in \mathbb{Q}(u,z,\greeks)$,
 since the operator $\Gamma$ is derivative, we have the following equality.
\[ \Gamma A = (\Gamma u)\frac{\partial}{\partial u} A + (\Gamma z) \frac{\partial}{\partial z} A + (\Gamma \zeta) \frac{\partial}{\partial \zeta} A + (\Gamma \gamma)\frac{\partial}{\partial \gamma} A + (\Gamma \eta) \frac{\partial}{\partial \eta} A + \sum_{i \geq 1} (\Gamma \eta_i) \frac{\partial}{\partial \eta_i} A + \sum_{i \geq 1} (\Gamma \zeta_i) \frac{\partial}{\partial \zeta_i} A. \]
By \Cref{prop:Gamma-on-uz} and
\Cref{prop:Gamma-on-Greek-expr}, with the fact that $s=\frac{1-uz}{1+uz}$, we easily verify that $\Gamma A$ is also an element of $\mathbb{Q}(u,z,\greeks)$.
Moreover, if the poles of $A$ in $u$ are among $\pm\frac{1}{z}$, then so are the poles of $\Gamma A$. Note also that since $s$ has degree $0$ in $u$, the quantity $\Gamma G$ for $G \in \{z\} \cup \greeks$ has degree 0. Since $\Gamma u$ has degree $1$, and since differentiations decrease the degree by 1, we conclude that the degree of $\Gamma A$ is at most the degree of $A$.

We now assume that $A$ is $uz$-symmetric. For $G \in \{z\} \cup \greeks$, the operator $\frac{\partial}{\partial G}$ preserves the $uz$-antisymmetry, and according to \Cref{prop:Gamma-on-uz} and \Cref{prop:Gamma-on-Greek-expr}, $\Gamma G$ is $uz$-antisymmetric. Therefore $(\Gamma G)\frac{\partial}{\partial G}A$ is $uz$-symmetric, being the product of two $uz$-antisymmetric factors. For $u$, according to \Cref{prop:Gamma-on-uz}, $u^{-1} \Gamma u$ is $uz$-symmetric. We now inspect $\frac{u\partial}{\partial u} A$. By $uz$-antisymmetry, $A(u) = - A(u^{-1} z^{-2})$, then we have
\[ \frac{u\partial}{\partial u} A(u) = - \frac{u\partial}{\partial u} A(u^{-1} z^{-2}) = u^{-1} z^{-2} \frac{\partial A}{\partial u} (u^{-1} z^{-2}), \]
so $\frac{u\partial}{\partial u} A$ is $uz$-symmetric. Therefore, all terms in the expression of $\Gamma A$ above are $uz$-symmetric, and $\Gamma A$ is $uz$-symmetric.
\end{proof}

\section{Structure of $Y(u)$ and expansion at the critical points $u=\pm\frac{1}{z}$.}
\label{sec:Y}

In this section we study the the kernel $Y(u)$ at the points $u=\pm\frac{1}{z}$ via explicit computations. This is the place where we will see the Greek variables appear. The purpose of this section is to give the proofs of the propositions concerning $Y$, namely \Cref{prop:structY}, \Cref{prop:structY-zero} and~\Cref{prop:diffY}. 
This will conclude the proof of all auxiliary results admitted in proof of \Cref{thm:mainRooted}.

\subsection{Structure of $Y(u)$ and proof of \Cref{prop:structY}}
\label{sec:proof-prop-structY}

We can now proceed to a proof of \Cref{prop:structY} concerning the form of $Y$. 
\begin{proof}[Proof of \Cref{prop:structY}]
We can rewrite $\theta$ in the following form.
\begin{align*}
\theta &= \sum_{i=1}^{K} \frac{p_i}{x^i} = \sum_{i=1}^{K} \frac{p_i (1+uz)^{2i}}{u^i} = (1+zu) \sum_{i=1}^{K} p_i z^i \frac{(1+zu)^{2i-1}}{u^i z^i} \\
&= (1+uz) \sum_{k=1}^{K} p_k z^k \sum_{\ell=0}^{2k-1} (uz)^{\ell-k} \binom{2k-1}{\ell} \\
&= (1+uz) \sum_{k=1}^{K} p_k z^k \sum_{\ell=-k}^{k-1} u^{\ell} z^{\ell} \binom{2k-1}{k+\ell}.
\end{align*}
We recall the following expression of $F_0$ in \Cref{prop:F0}.
\[ F_0 = (1+uz) \left( 1 - \sum_{k=1}^{K} p_k z^k \sum_{\ell=1}^{k-1} u^\ell z^\ell \binom{2k-1}{k+\ell} \right). \]

We can now compute $2F_0 + \theta$ directly.
\begin{align*}
2F_0 + \theta &= (1+uz) \left( 2 - \sum_{k=1}^{K} p_k z^k \left( 2\sum_{\ell=1}^{k-1} u^\ell z^\ell \binom{2k-1}{k+\ell} - \sum_{\ell=-k}^{k-1} u^\ell z^\ell \binom{2k-1}{k+\ell} \right) \right) \\
&= (1+uz) \left( 2 - \sum_{k=1}^{K} p_k z^k \left( \sum_{\ell=1}^{k-1} u^\ell z^\ell \binom{2k-1}{k+\ell} - \sum_{\ell=-k}^{0} u^\ell z^\ell \binom{2k-1}{k+\ell} \right) \right).
\end{align*}
We observe that $u^K (2F_0 + \theta) = (1+uz) Q(u)$ with $Q(u)$ polynomial in $u$ of degree $2K-1$. The polynomial $Q(u)$ has the additional property that $[u^k] Q(u)$ is a polynomial in $z$, and for $k \geq K-1$, $[u^k] Q(u)$ is divisible by $z^{2(k-K)+1}$.
We now evaluate $2F_0 + \theta$ at the point $u=1/z$.
\begin{align*}
(2F_0 + \theta)\Big|_{u=\frac{1}{z}} &= 2 \left( 2 - \sum_{k=1}^{K} p_k z^k \left( \sum_{\ell=1}^{k-1} \binom{2k-1}{k+\ell} - \sum_{\ell=-k}^{0} \binom{2k-1}{k+\ell} \right) \right) \\
&= 2 \left( 2 + 2 \sum_{k=1}^{K} p_k z^k \binom{2k-1}{k} \right). \\
&= 4 + 4\gamma
\end{align*}
Therefore $Q\Big|_{u=\frac{1}{z}} = (2 + 2\gamma) z^{-K}$. We now write
\[ Y = 1 - xt(2F_0 + \theta) = \frac{(1+uz)^2 (1+\gamma) - uz(2F_0 + \theta)}{(1+uz)^2 (1+\gamma)}, \]
so that
\[ (1+uz) (1+\gamma) u^{K-1} Y = (1+uz) (1+\gamma) u^{K-1} - z Q(u). \]
When evaluated at $u=1/z$, the right-hand side vanishes. This proves that the left-hand side, which is a polynomial in $u$ of degree $2K-1$, has $(1-uz)$ as factor. We can thus write:
\[ Y = \frac{N(u) (1-uz)}{u^{K-1}(1+uz)(1+\gamma)} \]
with $N(u)$ polynomial in $u$ of degree $2(K-1)$.
\end{proof}
\begin{proof}[Proof of \Cref{prop:structY-zero}]
We first observe that $Y^2$ is $uz$-symmetric. Indeed (using an idea already used in \cite{BC:planar} and sometimes called the \emph{quadratic method}, see e.g.~\cite{MBM:icm}), we can rewrite the Tutte equation~\eqref{eq:Tutte} for $g=0$ as follows:
\[ (1 - xt(2F_0 + \theta))^2 = x^2 t^2 \theta^2 - 4xt - 2xt \theta + 1 - 4xt(\Omega F_0 - \theta F_0). \]
The right-hand is a Laurent polynomial in $x$, therefore it is symmetric. Since $Y=1-xt(2F_0+\theta)$, we conclude that $Y^2$ is symmetric. Now, since $Y$ is a Laurent polynomial in $zu$, it follows that $Y$ is either symmetric or antisymmetric (indeed $Y(u)^2-Y^2(\frac{1}{z^2u})$ is null and factors as $(Y(u)-Y(\frac{1}{z^2u}))(Y(u)+Y(\frac{1}{z^2u}))$, so one of the two factors must be null, as a Laurent polynomial).
To determine whether $Y$ is symmetric or antisymmetric, we examine its poles at $zu=0$ and $zu=\infty$. From the expression $Y=1-xt(2F_0+\theta)$, from the definition of $\theta$, and from the explicit expression of $F_0$ given by \Cref{prop:F0}, it is straightforward to check that:
$$
Y(u) \sim - t p_k /(zu)^{k-1} \mbox{ when } zu\rightarrow 0 \; , \;
Y(u) \sim  t p_k (zu)^{k-1} \mbox{ when } zu\rightarrow \infty.
$$
We conclude that $Y$ is antisymmetric.

 Now we study the zeros of $N(u)$. We will do this by studing the \emph{Newton polygon} of $N(u)$, defined as the convex hull of the points $(i,j)\in\mathbb{R}^2$ such that the monomial $u^iz^j$  has non zero coefficient in $N(u)$. 

We will rely on the computations done in the previous proof.
We first observe that $[u^{K-1}]((1+uz) (1+\gamma) u^{K-1} - z Q(u))$ is a polynomial in $z$ with a constant term $1$, therefore the same holds for $[u^{K-1}] N(u)$, which implies that the point $B = (K-1, 0)$ is present in the Newton polygon of $N(u)$.
Moreover, we observe that $[u^{0}]((1+uz) (1+\gamma) u^{K-1} - z Q(u)) = - [u^0] z Q(u)$. But $[u^0] Q(u) = p_K$, therefore the point $A = (0,1)$ is present in the Newton polygon of $N(u)$. For any $k < K-1$, since $[u^k] Q(u)$ is a polynomial in $z$, the point $(k,0)$ is never in the Newton polygon of $N(u)$. Therefore, the segment $AB$ is a side of the Newton polygon of $N(u)$, and accounts for the $(K-1)$ small roots of $N(u)$.

We then observe that $[u^{2K-1}]((1+uz) (1+\gamma) u^{K-1} - z Q(u)) = -z[u^{2K-1}]Q(u) = p_K z^{2K}$. Therefore, the point $C = (2(K-1), 2K-1)$ is present in the Newton polygon of $N(u)$. Furthermore, for any $ k > K-1$, $[u^{k}]((1+uz) (1+\gamma) u^{K-1} - z Q(u)) = -z[u^{k}]Q(u)$, and $[u^k]Q(u)$ is divisible by $z^{k-K+1}$, thus $[u^k]N(u)$ is divisible by $z^{2(k-K)+2}$. The point corresponding to this term is $(k, 2(k-K)+2)$, and it always above the segment $BC$. We conclude that $BC$ is a side of the Newton polygon of $N(u)$, which accounts for the $(K-1)$ large roots of $N(u)$.

It remains to prove that the transformation $u \to \frac1{uz^2}$ exchanges large and small zeros of $N(u)$. Let $u_0$ be a small zero of $N(u)$, it is also a zero of $Y(u)$. But $Y$ is $uz$-antisymmetric, therefore $Y(u_0) = Y(u_0^{-1} z^{-2})$, thus $u_0^{-1} z^{-2}$ is also a zero of $Y(u)$, and it is clearly not $1/z$. The only possibility is that $u_0^{-1} z^{-2}$ is a zero of $N(u)$, and it is a large zero. Since the transformation $u \leftrightarrow u^{-1} z^{-2}$ is involutive, we conclude that it exchanges small and large zeros of $N(u)$.

\end{proof}

\subsection{Expansion of $Y(u)$ and proof of \Cref{prop:diffY}}
\label{sec:devY}

We now study the expansion of $Y(u)$ at critical points. This is where (finally!) Greek variables appear, and what explains their presence in Theorem~\ref{thm:mainRooted}. 

We will start by the Taylor expansion of $2F_0 + \theta$. Since we are computing the Taylor expansion by successive differentiation by $u$, for simplicity, we will use the shorthand $\partial_u$ for $\frac{\partial}{\partial u}$. For integers $\ell$ and $a$, we define the \emph{falling factorial} $(\ell)_{(a)}$ to be $(\ell)_{(a)} = \ell (\ell - 1) \dots (\ell -a+1)$.

\begin{prop} \label{prop:taylor-pos-pre}
At $u=1/z$, we have the following Taylor expansion of $2F_0 + \theta$.
\[ 2F_0+\theta = 4+4\gamma - 2(1-\eta)(1-uz) + \sum_{a \geq 2} (1-uz)^a \left( (\eta + \gamma) + \sum_{i=1}^{\lfloor \frac{a-1}{2} \rfloor} c^+_{i,a} \eta_i \right) \]
Here $c_{i,a}^+$ and $d_{a}^+$ are rational numbers depending only on $i, a$.
\end{prop}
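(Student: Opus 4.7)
The plan is to decompose $\Psi := 2F_0+\theta$ into parts that are symmetric and antisymmetric under the involution $v \leftrightarrow 1/v$ (where $v = uz$), and expand each part around $v=1$ in the variable $w = 1-uz$. Since $Y = 1 - xt\Psi$ is $uz$-antisymmetric by \Cref{prop:structY-zero} and $xt = v/[(1+v)^2(1+\gamma)]$ is symmetric under $v\leftrightarrow 1/v$, one obtains
\[
\Psi(v) + \Psi(1/v) = 2(1+\gamma)(y+2), \qquad y := v + 1/v.
\]
This gives the decomposition $\Psi = \Psi^{\mathrm{sym}} + \Psi^{\mathrm{anti}}$ with $\Psi^{\mathrm{sym}} = (1+\gamma)(y+2)$; since $\Psi^{\mathrm{anti}}$ is antisymmetric and vanishes at $v=1$, it factors as $\Psi^{\mathrm{anti}}(v) = (v-1/v)\tilde\Psi(y)$ for a formal power series $\tilde\Psi$ in $y-2$.

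Using $\Psi^{\mathrm{anti}} = F_0(v) - F_0(1/v)$ (because $\theta$ is $v$-symmetric) together with the factorization $v^\ell - v^{-\ell} = (v-1/v)U_{\ell-1}(y/2)$ via Chebyshev polynomials of the second kind, \Cref{prop:F0} yields $\tilde\Psi(y) = 1 - \sum_{k\geq 1}p_kz^k\,\tilde H_k(y)$ with $\tilde H_k(y) := \sum_{\ell=1}^{k-1}[U_{\ell-1}(y/2)+U_\ell(y/2)]\binom{2k-1}{k+\ell}$. Applying $U_n^{(j)}(1) = j!\,2^j\binom{n+j+1}{2j+1}$ and $\binom{\ell+j}{2j+1}+\binom{\ell+j+1}{2j+1} = \tfrac{2\ell+1}{2j+1}\binom{\ell+j}{2j}$ reduces the $j$-th Taylor coefficient at $y=2$ to $\tilde H_k^{(j)}(2) = \tfrac{j!}{2j+1}\sum_{\ell=1}^{k-1}(2\ell+1)\binom{\ell+j}{2j}\binom{2k-1}{k+\ell}$. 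The key computational lemma I would prove next is: for each integer $i\geq 0$,
\[
\sum_{\ell=1}^{k-1}(2\ell+1)(\ell(\ell+1))^i\binom{2k-1}{k+\ell} = (k-1)\binom{2k-1}{k}\,Q_i(k)
\]
for an explicit polynomial $Q_i$ of degree $i$ in $k$. The proof applies the operator $(v\partial_v)^i(v\partial_v+1)^i(2v\partial_v+1)$ to the partial-sum symmetry identity $F(v)+v^{-1}F(1/v)=v^{-k}(1+v)^{2k-1}$ where $F(v):=\sum_{m=0}^{k-1}\binom{2k-1}{k+m}v^m$, and evaluates at $v=1$; the $2^{2k}$ exponential terms cancel from the two sides of the identity, and the $(k-1)$ factor emerges because the sum is trivially empty at $k=1$. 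Since $\binom{\ell+j}{2j}$ is symmetric under $\ell\leftrightarrow-1-\ell$ of degree $j$ in $\ell(\ell+1)$, decomposing it in the basis $\{(\ell(\ell+1))^i\}_{i=0}^j$ yields $\tilde H_k^{(j)}(2) = (k-1)\binom{2k-1}{k}P_j(k)$ with $\deg P_j = j$, so $\tilde\Psi_j := \tilde\Psi^{(j)}(2)/j!$ lies in the rational span of $\eta,\eta_1,\dots,\eta_j$.

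Finally, substituting $y+2 = 4 + \tfrac{w^2}{1-w}$, $y-2 = \tfrac{w^2}{1-w}$, and $v-1/v = -\tfrac{w(2-w)}{1-w}$ into $\Psi = (1+\gamma)(y+2) + (v-1/v)\tilde\Psi(y)$ and matching coefficients of $w^a$ reproduces $\Psi_0 = 4+4\gamma$ and $\Psi_1 = -2(1-\eta)$ immediately. For $a\geq 2$, $\Psi^{\mathrm{sym}}$ contributes $1+\gamma$, the leading $\tilde\Psi_0 = 1-\eta$ part of $\Psi^{\mathrm{anti}}$ contributes $\eta-1$, combining to the $\eta+\gamma$ term, and higher coefficients $\tilde\Psi_j$ (for $j\geq 1$) contribute to $\Psi_a$ only when $a\geq 2j+1$ because $(v-1/v)(y-2)^j$ has leading order $w^{2j+1}$; each such contribution lies in the rational span of $\eta_1,\dots,\eta_j$, producing the stated sum over $i \leq \lfloor(a-1)/2\rfloor$. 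The main obstacle is the binomial-sum lemma in the second paragraph, where one must carefully track the cancellation of $2^{2k}$ exponential terms against polynomial-in-$k$ factors to expose the $(k-1)\binom{2k-1}{k}$ structure.
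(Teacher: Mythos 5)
Your overall strategy is genuinely different from the paper's and is conceptually appealing: you exploit the $uz$-antisymmetry of $Y$ to get the clean decomposition $\Psi = (1+\gamma)(y+2) + (v-1/v)\tilde\Psi(y)$, convert $\tilde\Psi$ into Chebyshev polynomials, and reduce everything to a single family of partial binomial sums. The paper instead differentiates $2F_0+\theta$ in $u$ directly and evaluates the resulting binomial sums using lattice-path generating functions. Your route, if completed, would give a slightly cleaner organization of the terms.

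However, there is a genuine gap in the proof of the key lemma
\[
\sum_{\ell=1}^{k-1}(2\ell+1)\bigl(\ell(\ell+1)\bigr)^i\binom{2k-1}{k+\ell} = (k-1)\binom{2k-1}{k}\,Q_i(k).
\]
The lemma itself is correct (one checks $Q_0=1$, $Q_1(k)=k$, $Q_2(k)=2k(k-1)$, etc.), but the proposed derivation does not work. Write $L=v\partial_v$. On the monomial $v^m$ the operator $L^i(L+1)^i(2L+1)$ acts as multiplication by $m^i(m+1)^i(2m+1)$, and this factor is \emph{antisymmetric} under $m\mapsto -1-m$. When you apply the operator to the identity $F(v)+v^{-1}F(1/v)=v^{-k}(1+v)^{2k-1}$ and set $v=1$, the two terms on the left are sent to $\sum_{m=0}^{k-1}m^i(m+1)^i(2m+1)\binom{2k-1}{k+m}$ and to \emph{exactly minus} the same sum (since $v^{-1}F(1/v)=\sum_m\binom{2k-1}{k+m}v^{-m-1}$ and $(-m-1)^i(-m)^i(2(-m-1)+1)=-m^i(m+1)^i(2m+1)$), so the left-hand side collapses to $0$. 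The right-hand side, being the image of a function whose antisymmetric part vanishes at $v=1$, is likewise $0$: for $i=0$ one computes $(2L+1)[v^{-k}(1+v)^{2k-1}]\big|_{v=1}=-2\cdot2^{2k-2}+2^{2k-1}=0$, and the same holds for all $i$. The exponential terms do cancel, but so does everything else; the identity reduces to $0=0$ and contains no information about the partial sum. Choosing a non-antisymmetric operator such as $L^i(L+1)^i$ avoids this collapse but then computes $\sum_m m^i(m+1)^i\binom{2k-1}{k+m}$, not the sum with the extra factor $2m+1$ that you need.

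So the lemma needs a different proof. Two standard routes that do close the gap: (i) a generating-function computation in the spirit of the paper's $D_a(y)$, $T_a(y)$ (using $\sum_{k>\ell}\binom{2k-1}{k+\ell}y^k = \frac{y^{\ell+1}C(y)^{2\ell+1}}{\sqrt{1-4y}}$ with $C$ the Catalan series); or (ii) a telescoping/WZ certificate, e.g.\ for $i=0$ one has $(2\ell+1)\binom{2k-1}{k+\ell}=\bigl[-(k+\ell+1)\binom{2k-1}{k+\ell+1}\bigr]-\bigl[-(k+\ell)\binom{2k-1}{k+\ell}\bigr]$, which telescopes to $(k+1)\binom{2k-1}{k+1}=(k-1)\binom{2k-1}{k}$; higher $i$ admit similar rational-certificate telescopes. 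The rest of your argument — the $\Psi^{\mathrm{sym}}/\Psi^{\mathrm{anti}}$ split, the formula $U_n^{(j)}(1)=j!\,2^j\binom{n+j+1}{2j+1}$, the reduction of $\binom{\ell+j}{2j}$ to the basis $\{(\ell(\ell+1))^i\}_{i=1}^j$, and the final substitution $w=1-uz$ — is sound and does reproduce the proposition once the lemma is established.
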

\begin{proof}
We proceed by computing successive derivatives evaluated at $u=1/z$. In the proof of Proposition~\ref{prop:structY}, we already showed that $(2F_0 + \theta)(u=1/z) = 4 + 4\gamma$, which accounts for the first term.

For other terms, we recall the expression of $2F_0 + \theta$ we used in the proof of Proposition~\ref{prop:structY}.
\begin{align*}
2F_0 + \theta &= (1+uz) \left( 2 - \sum_{k=1}^{K} p_k z^k \left( \sum_{\ell=1}^{k-1} u^\ell z^\ell \binom{2k-1}{k+\ell} - \sum_{\ell=-k}^{0} u^\ell z^\ell \binom{2k-1}{k+\ell} \right) \right) \\
&= (2+2uz) - \sum_{k=1}^{K} p_k z^k \left( \sum_{\ell=1}^{k-1} u^\ell z^\ell \binom{2k-1}{k+\ell} + \sum_{\ell=2}^{k} u^\ell z^\ell \binom{2k-1}{k+\ell-1} \right) \\
&\quad + \sum_{k=1}^{K} p_k z^k \left( \sum_{\ell=-k}^{0} u^\ell z^\ell \binom{2k-1}{k+\ell} + \sum_{\ell=-k+1}^{1} u^\ell z^\ell \binom{2k-1}{k+\ell-1} \right) \\
&=  (2+2uz) + \sum_{k=1}^{K} p_k z^k \left(  \sum_{\ell=-k}^{0} u^\ell z^\ell \binom{2k}{k+\ell} - \sum_{\ell=2}^{k} u^\ell z^\ell \binom{2k}{k+\ell} + \frac{2}{k+1} \binom{2k-1}{k} uz \right),
\end{align*}
by grouping the powers of $(uz)$ together. Now we compute the first term.
\begin{align*}
\left. \partial_u (2F_0 + \theta) \right|_{u=1/z} &= 2z + z \sum_{k=1}^{K} p_k z^k \left(  \sum_{\ell=-k}^{0} \ell \binom{2k}{k+\ell} - \sum_{\ell=2}^{k} \ell \binom{2k}{k+\ell} + \frac{2}{k+1} \binom{2k-1}{k} \right) \\
&= 2z - z \sum_{k=1}^{K} p_k z^k \left(  \sum_{\ell=0}^{k} \ell \binom{2k}{k+\ell} + \sum_{\ell=2}^{k} \ell \binom{2k}{k+\ell} + \frac{2}{k+1} \binom{2k-1}{k} \right) \\
&= 2z - z \sum_{k=1}^{K} p_k z^k (2k-2) \binom{2k-1}{k} = 2z(1-\eta)
\end{align*}

For any $a \geq 2$, the $a$-th differentiation of $2F_0 + \theta$ evaluated at $u=1/z$ is
\begin{align*}
\left. \partial_u^a (2F_0 + \theta)\right|_{u=1/z} &= z^a \sum_{k=1}^{K} p_k z^k \left(  \sum_{\ell=-k}^{0} (\ell)_{(a)} \binom{2k}{k+\ell} - \sum_{\ell=2}^{k} (\ell)_{(a)} \binom{2k}{k+\ell} \right) \\
&= z^a \sum_{k=1}^{K} p_k z^{k} \left( \sum_{\ell=1}^{k} (-1)^a \binom{2k}{k+\ell} (\ell+a-1)_{(a)} - \sum_{\ell=1}^{k} \binom{2k}{k+\ell} (\ell)_{(a)} \right)
\end{align*}

We first compute the quantity $\sum_{\ell=1}^{k} (\ell)_{(a)} \binom{2k}{k+\ell}$ given $a \geq 2$ fixed for any $k$. It is natural to consider the following generating series:
\[ D_a(y) = \sum_{k \geq 0} y^k \sum_{\ell=1}^{k} (\ell)_{(a)} \binom{2k}{k+\ell} = a! \sum_{k \geq 0} y^k \sum_{\ell=1}^{k} \binom{\ell}{a} \binom{2k}{k+\ell}. \]
We choose to compute $D_a$ via a combinatorial interpretation in terms of lattice paths.
Note that the number $[y^k]D_a/a!$ counts paths of length $2k$ with $+1$ and $-1$ steps, ending at height $2\ell$ ($k+\ell$ steps up and $k-\ell$ steps down), with $a$ distinct even and positive heights (including 0) below $2\ell$ marked. By decomposing the whole path at the last passage for each height, we have the following equality.
\[ D_a(y) = a! E(y)(1+C(y))C(y)^a \]
Here, $E(y)$ is the series of paths ending at $0$, and $C(y)$ is the series of 
paths of even length ending in a strictly positive height. All these series are classically expressed in terms of the series of Dyck paths as follows. Let $B(y)$ be the series of Dyck paths, \textit{i.e.} paths ending at $0$ and remaning always nonnegative. We have by classical decompositions $E(y) = \frac{2}{1-(B(y)-1)} - 1$ and $C(y) = \frac{y B(y)^2}{1-y B(y)^2}$. But we know that $B(y)$ verifies the equation $B(y) = 1 + y B(y)^2$, so we finally obtain the wanted expression of $D_a$:
\[ B(y) = \frac{1-\sqrt{1-4y}}{2y}, \; E(y) = \frac1{\sqrt{1-4y}}, \; C(y) = \frac1{2}\left( \frac1{\sqrt{1-4y}} - 1 \right), \]
\[ D_a(y) = \frac{a!}{2^{a+1}} \frac1{\sqrt{1-4y}} \left( \frac1{1-4y} - 1 \right) \left( \frac1{\sqrt{1-4y}} - 1 \right)^{a-1}.  \]

We now want to compute the quantity $\sum_{\ell=1}^{k} (\ell+a-1)_{(a)} \binom{2k}{k+\ell}$ given $a \geq 2$ fixed for any $k$. We consider the following generating sequence.

\[ T_a(y) = \sum_{k \geq 0} y^k \sum_{\ell=1}^{k} (\ell+a-1)_{(a)} \binom{2k}{k+\ell} = a! \sum_{k \geq 0} y^k \sum_{\ell=1}^{k} \binom{\ell+a-1}{a} \binom{2k}{k+\ell} \]
The combinatorial interpretation is essentially the same as $D_a(y)$, but in this case the $c$ heights are not necessarily distinct, therefore we have the following equality.
\[ T_a(y) = a! E(y)(1+C(y))^{a}C(y) = \frac{a!}{2^{a+1}} \frac1{\sqrt{1-4y}} \left( \frac1{1-4y} - 1 \right) \left( \frac1{\sqrt{1-4y}} + 1 \right)^{a-1} \]

Since $[p_k z^{k+a}]\left. \partial_u^a (2F_0 + \theta)\right|_{u=1/z} = [y^k]((-1)^a T_a(y) - D_a(y))$, we now consider $(-1)^a T_a(y) - D_a(y)$.

\[ (-1)^a T_a(y) - D_a(y) = \frac{a!(-1)^a}{2^{a+1}} \frac1{\sqrt{1-4y}} \frac{4y}{1-4y} \left( \left( \frac1{\sqrt{1-4y}} + 1 \right)^{a-1} + \left( 1 - \frac1{\sqrt{1-4y}} \right)^{a-1} \right) \]


We observe that, whenever $a$ is even or odd, when viewed as a polynomial in $\frac1{\sqrt{1-4y}}$, $(-1)^a T_a(y) - D_a(y)$ is always a linear combination of terms of the form $\frac{4y}{(1-4y)^{3/2}} (1-4y)^t$, and we also observe that $[y^k]\frac{4y}{(1-4y)^{3/2}} (1-4y)^t = [x^k z^k] \frac{4xz}{(1-4xz)^{3/2}} (1-4xz)^t$. We thus have the following expression of $\left. \partial_u^a (2F_0 + \theta)\right|_{u=1/z}$.
\begin{align*}
\left. \partial_u^a (2F_0 + \theta)\right|_{u=1/z} &= z^a \sum_{k = 1}^{K} p_k z^k [y^k] \left( \frac{a!(-1)^a}{2^{a+1}} \frac1{\sqrt{1-4y}} \frac{4y}{1-4y} \left( \left( \frac1{\sqrt{1-4y}} + 1 \right)^{a-1} + \left( 1 - \frac1{\sqrt{1-4y}} \right)^{a-1} \right) \right) \\
&= z^a \Theta^{-1} \left( \frac{a!(-1)^a}{2^{a}} s^{-2} (s^{-1} - s) \sum_{i=0}^{\lfloor \frac{a-1}{2} \rfloor} \binom{a-1}{2i} s^{-2i} \right)
\end{align*}

We observe that $\Theta \eta_1 = \frac{8}{3} s^{-3} (s^{-1} - s)^2$, and since $\Theta \eta_{i+1} = (s - s^{-1}) \partial_s \Theta \eta_{i}$, by induction on $i$ we know that $\Theta \eta_i$, as a Laurent polynomial in $s$, has a factor $(s-s^{-1})^2$ for $i \geq 1$. Therefore, from \Cref{prop:Theta-base} we know that, for any polynomial $P(s^{-2})$ in $s^{-2}$, $\Theta^{-1} \left( s^{-2}(s - s^{-1})P(s^{-2}) \right)$ is a linear combination of $(\eta+\gamma)$ and $\eta_i$ for $i \geq 0$, and $[\eta+\gamma]\Theta^{-1} \left( s^{-2}(s - s^{-1})P(s^{-2}) \right) = 4P(1)$ by the fact that $\Theta(\eta+\gamma)=s^{-2}(s - s^{-1})/4$. Therefore we have
\begin{align*}
\left. \partial_u^a (2F_0 + \theta)\right|_{u=1/z} &= a! z^a (-1)^a \left( (\eta + \gamma) + \sum_{i=1}^{\lfloor \frac{a-1}{2} \rfloor} c^+_{i,a} \eta_i \right),
\end{align*}
for some rational number $c^+_{i,a}$ which concludes the proof.
\end{proof}

We now perform a very similar computation for the other pole $u=-1/z$.

\begin{prop} \label{prop:taylor-neg-pre}
At $u=-1/z$, we have the following Taylor expansion of $2F_0 + \theta$.
\[ 2F_0+\theta = 2(1+\zeta)(1+uz) +  \sum_{a \geq 2} (1+uz)^a \left( (\zeta - \gamma) + \sum_{i=1}^{\lfloor \frac{a-1}{2} \rfloor} c^-_{i,a} \zeta_i \right)  \]
Here $c_{i,a}^-$ are rational numbers depending only on $i, a$.
\end{prop}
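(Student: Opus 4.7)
The plan is to adapt, line by line, the proof of Proposition \ref{prop:taylor-pos-pre} to the pole $u=-1/z$. The key structural parallel is provided by Proposition \ref{prop:Theta-base}: the images $\Theta(\zeta-\gamma)$ and $\Theta\zeta_i$ for $i\geq 1$ form a triangular basis of $(s^{-1}-s)\mathbb{Q}[s^2]$, which is the ``opposite'' subspace to the span of $\Theta(\eta+\gamma)$ and $\Theta\eta_i$ used in the positive-pole case. So once I show that the Taylor coefficients at $u=-1/z$, viewed under $\Theta^{-1}$, land in $(s^{-1}-s)\mathbb{Q}[s^2]$, the form of the expansion follows.

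First I would start from the same rewriting
\[
2F_0 + \theta = (2+2uz) + \sum_{k=1}^K p_k z^k\Big(\sum_{\ell=-k}^0 u^\ell z^\ell \binom{2k}{k+\ell} - \sum_{\ell=2}^k u^\ell z^\ell \binom{2k}{k+\ell} + \tfrac{2}{k+1}\binom{2k-1}{k}\, uz\Big)
\]
already established in the previous proof. Direct substitution $u=-1/z$, combined with the elementary identities $\binom{2k}{k}-\binom{2k}{k+1}=\frac{1}{k+1}\binom{2k}{k}=\frac{2}{k+1}\binom{2k-1}{k}$, shows that the constant term in $(1+uz)$ vanishes, which is consistent with the statement (no $a=0$ term). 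Computing $\partial_u(2F_0+\theta)|_{u=-1/z}$ by the same manipulations, the coefficient of $p_k z^{k+1}$ simplifies to $\frac{2(k-1)}{2k-1}\binom{2k-1}{k}$, so that the linear term in $(1+uz)$ is exactly $2z(1+\zeta)$.

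For $a\geq 2$, I would establish
\[
\partial_u^a(2F_0+\theta)\big|_{u=-1/z} = z^a \sum_{k=1}^K p_k z^k\, [y^k]\big((-1)^a T_a^-(y) - D_a^-(y)\big),
\]
where $D_a^-(y)= a!\sum_k y^k \sum_{\ell=1}^k \binom{\ell}{a}(-1)^{\ell}\binom{2k}{k+\ell}$ and $T_a^-(y)$ is its analogue with $\binom{\ell+a-1}{a}$. These are the signed analogues of the series $D_a,T_a$ of the previous proof. I would evaluate them in closed form by either (i) reinterpreting the $(-1)^\ell$ as a sign associated to the ending height of $\pm 1$-step walks, absorbing it into the generating series of Dyck paths via the substitution that accounts for the height parity, or (ii) just rederiving the closed forms by the same last-passage decomposition used in the positive case, obtaining rational functions in $\sqrt{1-4y}$. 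Setting $y = xz$ so that $s^2 = 1-4xz$ turns these into Laurent polynomials in $s$ which, because of the $(-1)^\ell$ factor, lie in $(s^{-1}-s)\mathbb{Q}[s^2]$ rather than $(s^{-1}-s)s^{-2}\mathbb{Q}[s^{-2}]$. Inverting $\Theta$ on the triangular basis $\{\Theta(\zeta-\gamma),\Theta\zeta_i\}$ of this space then produces the expansion $(\zeta-\gamma)+\sum_{i=1}^{\lfloor (a-1)/2\rfloor}c_{i,a}^-\zeta_i$, and the bound $i\leq \lfloor(a-1)/2\rfloor$ on the index follows from the $s^2$-degree of $(-1)^a T_a^--D_a^-$ (of order $a-1$).

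The main obstacle will be bookkeeping: tracking the alternating signs carefully through the lattice path computations, and verifying that $(-1)^a T_a^- - D_a^-$ is genuinely supported on $(s^{-1}-s)\mathbb{Q}[s^2]$, with no contamination from the ``$\eta$-sector'' $(s^{-1}-s)s^{-2}\mathbb{Q}[s^{-2}]$. Once this parity/sign separation is checked, the remainder of the argument is a cosmetic transcription of the proof of Proposition \ref{prop:taylor-pos-pre}, substituting $\zeta$ and $\zeta_i$ for $\eta+\gamma$ and $\eta_i$ throughout.
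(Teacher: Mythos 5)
Your plan mirrors the paper's proof essentially line by line: the same starting rewriting of $2F_0+\theta$, the same direct evaluation of the $a=0,1$ terms, the same signed lattice-path generating series (the paper writes these as $\tilde{D}_a$, $\tilde{T}_a$, closed via the series $\tilde{C}(y)=\frac{-yB(y)^2}{1+yB(y)^2}$ that carries the height-parity sign), the same closed forms in $\sqrt{1-4y}$, and the same inversion of $\Theta$ on the $(s^{-1}-s)\mathbb{Q}[s^2]$ sector via the triangular basis $\{\Theta(\zeta-\gamma),\Theta\zeta_i\}$. The one slip is the sign combination: the coefficient of $p_k z^{k+a}$ should be $[y^k]\bigl(T_a^-(y) - (-1)^a D_a^-(y)\bigr)$ rather than $[y^k]\bigl((-1)^a T_a^-(y) - D_a^-(y)\bigr)$ --- these differ by an overall factor $(-1)^a$, which you would catch by checking that the coefficient of $(\zeta-\gamma)$ in $z^{-a}\partial_u^a(2F_0+\theta)\big|_{u=-1/z}$ comes out as $a!$ and not $(-1)^a a!$.
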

\begin{proof}
For the constant term, we first recall the following expression in the proof of \Cref{prop:structY}.
\[
2F_0 + \theta = (1+uz) \left( 2 - \sum_{k=1}^{K} p_k z^k \left( \sum_{\ell=1}^{k-1} u^\ell z^\ell \binom{2k-1}{k+\ell} - \sum_{\ell=-k}^{0} u^\ell z^\ell \binom{2k-1}{k+\ell} \right) \right)
\]
It is obvious that $\left. (2F_0 + \theta) \right|_{u=-1/z}$ vanishes.

For other terms, we will recycle the following expression of $2F_0 + \theta$ in the proof of \Cref{prop:taylor-pos-pre}.
\[
2F_0 + \theta = (2+2uz) + \sum_{k=1}^{K} p_k z^k \left(  \sum_{\ell=-k}^{0} u^\ell z^\ell \binom{2k}{k+\ell} - \sum_{\ell=2}^{k} u^\ell z^\ell \binom{2k}{k+\ell} + \frac{2}{k+1} \binom{2k-1}{k} uz \right)
\]
The first-order differentiation becomes
\begin{align*}
\left. \partial_u (2F_0 + \theta) \right|_{u=-1/z} &= 2z - z \sum_{k=1}^{K} p_k z^k \left(  \sum_{\ell=-k}^{0} (-1)^{\ell} \ell \binom{2k}{k+\ell} - \sum_{\ell=2}^{k} (-1)^{\ell} \ell \binom{2k}{k+\ell} - \frac{2}{k+1} \binom{2k-1}{k} \right) \\
&= 2z - z\sum_{k=1}^{K} p_k z^k \frac{2-2k}{2k-1} \binom{2k-1}{k} = 2z(1+\zeta)
\end{align*}

For any $a \geq 2$, the $a$-th differentiation of $2F_0 + \theta$ evaluated at $u=-1/z$ is
\begin{align*}
\left. \partial_u^a (2F_0 + \theta)\right|_{u=-1/z} &= z^a \sum_{k=1}^{K} p_k z^{k} \left( \sum_{\ell=1}^{k} (-1)^\ell \binom{2k}{k+\ell} (\ell+a-1)_{(a)} - \sum_{\ell=1}^{k} (-1)^{\ell - a} \binom{2k}{k+\ell} (\ell)_{(a)} \right)
\end{align*}

We will now much borrow the combinatorial interpretation presented in the proof of \Cref{prop:taylor-pos-pre}. We now consider the following generating functions.

\[ \tilde{D}_a(y) = \sum_{k \geq 0} y^k \sum_{\ell=1}^{k} (-1)^\ell (\ell)_{(a)} \binom{2k}{k+\ell} = a! \sum_{k \geq 0} y^k \sum_{\ell=1}^{k} (-1)^\ell \binom{\ell}{a} \binom{2k}{k+l} \]

\[ \tilde{T}_a(y) = \sum_{k \geq 0} y^k \sum_{\ell=1}^{k} (-1)^\ell (\ell+a-1)_{(a)} \binom{2k}{k+\ell} = a! \sum_{k \geq 0} y^k \sum_{\ell=1}^{k} (-1)^\ell \binom{\ell+a-1}{a} \binom{2k}{k+\ell} \]

We can see that $[p_k z^{k+a}] \left. \partial_u^a (2F_0 + \theta) \right|_{u=-1/z} = [y^k](\tilde{T}_a(y) - (-1)^a \tilde{D}_a(y))$. Furthermore, these two series have combinatorial interpretation similar with $D_a(y)$ and $T_a(y)$ in the proof of \Cref{prop:taylor-pos-pre}, with the only difference that the parity of the height at the end also contributes as a sign. We define $\tilde{C}(y) = \frac{-y B(y)^2}{1+y B(y)^2}$. We have the following equalities, with $C(y)$ and $E(y)$ borrowed from the proof of \Cref{prop:taylor-pos-pre}.

\[ \tilde{C}(y) = \frac1{2}\left( \sqrt{1-4y} - 1 \right) \]
\[ \tilde{D}_a(y) = a! E(y) (1+\tilde{C}(y)) C(y)^a = \frac{a!}{2^{a+1}} \frac{-4y}{\sqrt{1-4y}} \left( \sqrt{1-4y} - 1 \right)^{a-1} \]
\[ \tilde{T}_a(y) = a! E(y) (1+\tilde{C}(y))^a C(y) = \frac{a!}{2^{a+1}} \frac{-4y}{\sqrt{1-4y}} \left( \sqrt{1-4y} + 1 \right)^{a-1} \]

Therefore, we have

\[ \tilde{T}_a(y) - (-1)^a \tilde{D}_a(y) = \frac{a!}{2^{a+1}} \frac{-4y}{\sqrt{1-4y}} \left( \left( 1 + \sqrt{1-4y} \right)^{a-1} + \left( 1 - \sqrt{1-4y} \right)^{a-1} \right). \]


We observe that for any value of $a \geq 2$, $\tilde{T}_a(y) - (-1)^a \tilde{D}_a(y)$ is a linear combination of terms of the form $\frac{-4y}{\sqrt{1-4y}} (1-4y)^t$, and we also observe that $[y^k]\frac{-4y}{\sqrt{1-4y}} (1-4y)^t = [x^k z^k] \frac{-4xz}{\sqrt{1-4xz}} (1-4xz)^t$. We thus have the following expression of $\left. \partial_u^a (2F_0 + \theta)\right|_{u=-1/z}$.
\begin{align*}
\left. \partial_u^a (2F_0 + \theta)\right|_{u=-1/z} &= z^a \sum_{k=1}^{K} p_k z^{k} [y^k] \left( \frac{a!}{2^{a+1}} \frac{-4y}{\sqrt{1-4y}} \left( \left( 1 + \sqrt{1-4y} \right)^{a-1} + \left( 1 - \sqrt{1-4y} \right)^{a-1} \right) \right) \\
&= \frac{z^a a!}{2^{a}} \Theta^{-1} \left( (s-s^{-1}) \sum_{i=0}^{\lfloor \frac{a-1}{2} \rfloor} \binom{a-1}{2i} s^{2i} \right) \\
\end{align*}

We observe that $\Theta (\zeta - \gamma) = (s - s^{-1})/4$, and $\Theta \zeta_i = (s^{-1} - s)(s^2 - 1)^i$, therefore, for any polynomial $P$, $\Theta^{-1} ((s - s^{-1}) P(s^2))$ is a linear combination of $\zeta - \gamma$ and $\zeta_i$, and $[\zeta - \gamma]\Theta^{-1} ((s - s^{-1}) P(s^2)) = 4P(1)$. Therefore we have
\[ 
\left. \partial_u^a (2F_0 + \theta)\right|_{u=-1/z} = z^a a! \left( (\zeta - \gamma) + \sum_{i=1}^{\lfloor \frac{a-1}{2} \rfloor} c^-_{i,a} \zeta_i \right)
\]
for some rational numbers $c^+_{i,a}$.
\end{proof}

With \Cref{prop:taylor-pos-pre} and \Cref{prop:taylor-neg-pre}, we can prove \Cref{prop:diffY} now.

\begin{proof}[Proof of \Cref{prop:diffY}]
We will first rewrite $xtP/Y$ as follows.
\[ \frac{xt P}{Y} = \frac{1-uz}{1+uz} \frac1{(1+\gamma)\frac{(1+uz)^2}{uz} - (2F_0 + \theta)} \]
And now we will substitute the Taylor expansion of $2F_0 + \theta$ at $u = \pm 1/z$ into the formula above to obtain the Taylor expansion of $xtP/Y$ at corresponding points.

We will first treat the point $u=1/z$.
\begin{align*}
\frac{xtP}{Y} &= \frac{1-uz}{(8 - 4(1-uz) + 2(1-uz)^2 + \sum_{i \geq 3} (1-uz)^i) (1+\gamma) - (2 - (1-uz)) (2F_0 + \theta)} \\
&=  \frac{1}{4(1-\eta) - \sum_{a \geq 2} (1-uz)^a \left( (\eta - 1) + \sum_{i = 1}^{\lfloor a/2 \rfloor} (2c_{i,a+1}^+ - c_{i,a}^+ ) \eta_i  \right)} \\
&= \frac1{4(1-\eta)} + \sum_{\alpha, a \geq 2|\alpha|} c'''_{\alpha, a} \frac{\eta_\alpha}{(1-\eta)^{\ell(\alpha) + 1}} (1-uz)^{a}
\end{align*}

The treatment for $u=-1/z$ is similar.
\begin{align*}
\frac{xtP}{Y} &= \frac{2-(1+uz)}{ - \sum_{i \geq 3} (1+uz)^i (1+ \gamma) - (1+uz)(2F_0 + \theta)} \\
&= \frac{-2+(1-uz)}{(1+uz)^2 \left[ 2(1+\zeta) +  \sum_{a \geq 2} (1+uz)^{a-1} \left( (1 + \zeta) +  \sum_{i=1}^{\lfloor \frac{a-1}{2} \rfloor} c^-_{i,a} \zeta_i \right) \right]} \\
&= -\frac1{(1+\zeta)(1+uz)^2} + \sum_{\alpha, a \geq 2|\alpha|} c''_{\alpha, a} \frac{\zeta_\alpha}{(1+\zeta)^{\ell(\alpha) + 1}} (1+uz)^{a-2}
\end{align*}
\end{proof}

At this point, we have finished the proof \Cref{thm:mainRooted} (including all the statements that had been admitted in \Cref{sec:defsMain}). It remains to prove \Cref{thm:mainUnrooted} and \Cref{thm:unrootedGenus1}, which will be the purpose of the next section.

\section{Unrooting step and proof of Theorems~\ref{thm:mainUnrooted} and~\ref{thm:unrootedGenus1}}
\label{sec:unrooting}

In this section, we deduce \Cref{thm:mainUnrooted} from \Cref{thm:mainRooted},
 and we also check the exceptional case of genus~$1$ given by \Cref{thm:unrootedGenus1}.
Since the series $L_g$ and $F_g$ are related by the formula $F_g = \Gamma L_g$, studying $L_g$ from $L_g$ essentially amounts to inverting the differential operator $\Gamma$, \textit{i.e.}, heuristically, to perform some kind of \emph{integration}. Since in our case the generating functions of rooted maps given by \Cref{thm:mainRooted} are rational in our given set of parameters, it is no surprise that an important part of the work will be to show that this integration gives rise to no logarithm. This section is divided in two steps: we first construct (\Cref{sec:unrooting1}) two operators that enable us to ``partially'' invert the operator $\Gamma$ (\Cref{prop:decomposeGamma}),
and we reduce the inversion of the operator $\Gamma$ to the computation of a univariate integral. Then (\Cref{sec:unrooting2}) we conclude the proof of \Cref{thm:mainUnrooted} by proving that this integral contains no logarithms, using the combination of two combinatorial arguments: a disymmetry-type theorem, and a algebraicity statement proved with bijective tools in~\cite{Chapuy:constellations}.

\subsection{The operators $\Diamond$ and $\Box$.}
\label{sec:unrooting1}
\newcommand{\rL}{\mathbb{L}}

 The first idea of the proof is inspired from \cite{GGPN}, and consists in inverting the operator $\Gamma$ in two steps.
We define the ring $\rL$ formed by elements $f$ of $\mathbb{Q}[p_1,p_2,\dots][[z]]$ such that for all $k\geq 0$, the coefficient of $z^k$ in $f$ is a homogeneous polynomial in the $p_i$ of degree $k$ (where the degree of $p_i$ is defined to be $i$). Equivalently, $\rL=\mathbb{Q}[[zp_1,z^2p_2,z^3 p_3,\dots]]$.
 Note that any formal power series in the Greek variables, considered as an element of $\mathbb{Q}[p_1,p_2,\dots][[z]]$, is an element of $\rL$.
Note also that $L_g$ is an element of $\rL$. Indeed, if we view $L_g$ as a series in $t$, the coefficient of $t^k$ for $k\geq 0$ is a homogenous polynomial of degree $k$ in the $p_i$, since the sum of half-face degrees in a bipartite map is equal to the number of edges. Given the form of the change of variable $t\leftrightarrow z$ given by~\eqref{eq:z}, namely $t=z(1+\sum_k {2k-1 \choose l}p_k z^k)^{-1}$, this clearly implies that as a series in $z$, $L_g$ is in $\rL$.

We now introduce the linear operators $\Box$ and $\Diamond$ on $\mathbb{Q}[x,p_1,p_2,\dots][[z]]$  defined by 
\begin{align*}
\Box x^k = \Big(\frac{1}{k} - \frac{\gamma}{1+\gamma}\Big)p_k,  ~ ~ ~ \Diamond = \sum_{k} p_k \partial_{p_k},
\end{align*} 
where $\partial_{p_k}$ is the differential operator defined by~\eqref{eq:defpartialpk} in \Cref{subsec:defRingsOperators}. 
We have:
\begin{prop}\label{prop:decomposeGamma}
For any $A \in \rL$, we have $$\Diamond A =\Box\Gamma A .$$
In particular, $\Diamond L_g= \Box F_g$.  
\end{prop}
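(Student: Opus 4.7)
My plan is to expand $\Gamma A$ using the decomposition \eqref{eq:defpartialpk} of the operator $\partial / \partial p_k$, apply $\Box$ termwise, and then show that the ``extra piece'' coming from $\partial z / \partial p_k$ vanishes identically thanks to the homogeneity built into the ring $\mathbb{L}$.

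Concretely, for any $A \in \mathbb{L}$, I would first write
\[
\Gamma A = \sum_{k \geq 1} k x^k \left( \frac{\partial z}{\partial p_k} \frac{\partial A}{\partial z} + \partial_{p_k} A \right).
\]
Applying $\Box$ (which acts only on the variable $x$) and using the definition $\Box x^k = (1/k - \gamma/(1+\gamma))p_k$, this gives
\[
\Box \Gamma A = \underbrace{\sum_{k\geq 1} p_k \partial_{p_k} A}_{=\Diamond A} + \frac{\partial A}{\partial z}\sum_{k\geq 1} p_k \frac{\partial z}{\partial p_k} - \frac{\gamma}{1+\gamma}\sum_{k\geq 1} k p_k \partial_{p_k} A - \frac{\gamma}{1+\gamma}\frac{\partial A}{\partial z}\sum_{k\geq 1} k p_k \frac{\partial z}{\partial p_k}.
\]
It thus remains to check that the sum of the last three terms is zero.

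For this I would use two ingredients. First, the explicit formula $\partial z/\partial p_k = \binom{2k-1}{k}z^{k+1}/(1-\eta)$ from Proposition~\ref{prop:diff-of-vars} directly yields
\[
\sum_{k\geq 1} p_k \frac{\partial z}{\partial p_k} = \frac{z\gamma}{1-\eta}, \qquad \sum_{k\geq 1} k p_k \frac{\partial z}{\partial p_k} = \frac{z(\gamma+\eta)}{1-\eta},
\]
by the very definitions of $\gamma$ and $\eta$. Second, and this is the crucial point where $A \in \mathbb{L}$ is used, the coefficient $[z^j]A$ is a polynomial in the $p_i$ that is homogeneous of degree $j$ when $\deg p_i := i$; Euler's identity for homogeneous polynomials then yields $\sum_{k\geq 1} k p_k \partial_{p_k} A = z \, \partial A/\partial z$.

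Substituting these two identities back into the three residual terms, they collapse to
\[
\frac{\partial A}{\partial z}\cdot z \cdot \frac{\gamma}{(1+\gamma)(1-\eta)}\Bigl[(1+\gamma) - (\gamma+\eta) - (1-\eta)\Bigr] = 0,
\]
which completes the proof. The only mild subtlety is correctly distinguishing the ``full'' derivative $\partial/\partial p_k$ from the ``$z$-frozen'' derivative $\partial_{p_k}$, which is why the assumption $A \in \mathbb{L}$ (rather than an arbitrary series) is essential: it is precisely what guarantees that Euler's identity converts the $p_k$-Euler operator into the $z$-Euler operator, so that everything factors through the single auxiliary variable $z$ and the Greek combinations $\gamma$, $\eta$.
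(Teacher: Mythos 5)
Your proof is correct and rests on exactly the same three ingredients as the paper's: the chain-rule decomposition~\eqref{eq:defpartialpk}, Euler's identity for $\mathbb{L}$-homogeneous coefficients (turning $\sum_k kp_k\partial_{p_k}$ into $z\,\partial/\partial z$), and the explicit formula for $\partial z/\partial p_k$ from Proposition~\ref{prop:diff-of-vars} giving the two sums $\frac{z\gamma}{1-\eta}$ and $\frac{z(\gamma+\eta)}{1-\eta}$. The only difference is one of bookkeeping: the paper applies the operators $\Xi$ and $\Pi$ separately, derives the identity $\Pi\Gamma R = \frac{1+\gamma}{1-\eta}\,\frac{z\partial}{\partial z}R$, solves for $\frac{z\partial}{\partial z}R$, and substitutes back into the $\Xi$-relation, whereas you apply $\Box$ directly and show the three residual terms collapse to zero; your organization is arguably more transparent since it stays entirely with the explicit $\binom{2k-1}{k}z^{k+1}/(1-\eta)$ formula and avoids the paper's detour through the $s$-based expression for $\Gamma z$ from Proposition~\ref{prop:Gamma-on-uz}.
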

\begin{proof}
The proof is mainly a careful application of the chain rule and of the computations already made in \Cref{sec:Gamma}. Let $R\in\rL$. Since $\Gamma$ is a derivation we have:
\begin{align*}
\Gamma R &= \big(\Gamma z\big) \frac{\partial}{\partial z} R + \sum_{k} \big(\Gamma p_k \big) \partial_{p_k} R
= \big(\Gamma z\big) \frac{\partial}{\partial z} R + \sum_{k\geq 1}  k x^k \partial_{p_k} R, 
\end{align*}
so that $\displaystyle 
 \sum_{k}  k x^k \partial_{p_k} R =\Big(\Gamma - \big(\Gamma z\big) \frac{\partial}{\partial z}\Big) R$.
We now define the linear operators $\Pi: x^k \mapsto p_k$, and $\Xi: x^k \mapsto \frac{p_k}{k}$.
By applying $\Xi$ to the last equality, we get:
\begin{align*}
 \sum_{k}  p_k\partial_{p_k} R 
&= \Xi \Big(\Gamma - \big(\Gamma z\big) \frac{\partial}{\partial z}\Big) R\\
&= \Xi \Gamma  R - \Xi  \big(\Gamma z\big) \frac{\partial}{\partial z}R.
\end{align*}
We thus need to study the expression of $\Xi (\Gamma z) \frac{\partial}{\partial z} R$. We notice that, over the ring $\rL$, the operators $\Pi \sum_{k \geq 1}k \partial_{p_k}$ and $ \frac{z\partial}{\partial z}$ are equal.  Moreover, since $\Gamma z = \frac{(s^{-1} - s) z}{4s^2 (1-\eta)}$ by \Cref{prop:Gamma-on-Greek-expr}, the operator $(\Gamma z)\frac{d}{dz}$ stabilizes $\rL$, so we have:
\[ \Pi \Gamma R = \Pi (\Gamma z) \frac{\partial}{\partial z}R + \Pi \sum_{k \geq 1} kx^k \partial_{p_k}R = \left( \Pi \left( \frac{\Gamma z}{z} \right) + 1\right) z \frac{\partial}{\partial z}R = \frac{1+\gamma}{1-\eta}  \frac{z\partial}{\partial z} R. \] 
(this is the only point in the proof where we use the assumption that $R\in\rL$).
Note that we have used that $\left( \Pi \left( \frac{\Gamma z}{z} \right) + 1\right) = \left(\frac{1}{z}\sum_k kp_k \frac{\partial}{\partial p_k} z \right)+1 = \dfrac{1+\gamma}{1-\eta}$ where the first equality comes from the definition of $\Pi$ and~$\Gamma$, while the second follows from \Cref{prop:diff-of-vars} and the definitions of $\gamma$ and $\eta$.
The last displayed equation thus implies that:
$$  \frac{z\partial}{\partial z} R= \frac{1-\eta}{1+\gamma}\Pi \Gamma R.
$$
Substituting this in the previous expression of $\sum_{k \geq 1} p_k \partial_{p_k R}$ and recalling $\Gamma z = \frac{(s^{-1} - s) z}{4s^2 (1-\eta)}$ we obtain:
\begin{align*} \sum_{k \geq 1} p_k \partial_{p_k}R &=  \Xi \Gamma R -  \Xi\left(\frac{s^{-1} - s}{4 s^2 (1+\gamma)} \right) (\Pi \Gamma)R \\ 
&= \Xi \Gamma  R -  \frac{\gamma}{(1+\gamma)} (\Pi \Gamma)R \\ 
&= \Box\Gamma R,
\end{align*}
where that the last equality is straightforward from the definitions of $\Box, \Pi$, and $\Xi$, while the second one follows from the fact that
 $\Xi \frac{s^{-1}-s}{s^{2}}= D^{-1} \Theta^{-1}\gamma\frac{s^{-1}-s}{s^{2}}\gamma$,
from \Cref{prop:Theta-base} and a direct computation.
 This concludes the proof that $\Diamond R = \Box \Gamma R$ for $R\in\mathbb{L}$.

Finally, since $F_g = \Gamma L_g$ and $L_g \in \mathbb{L}$, it follows that $\Diamond L_g = \Box F_g$.
\end{proof}


\begin{prop}\label{prop:diamondLg}
$\Diamond L_g$ is a rational function of the Greek variables, i.e.:
$\Diamond L_g = R$ with $R\in \mathbb{Q}[\greeks]$, whose denominator is of the form $(1-\eta)^a(1+\zeta)^b$ for $a,b\geq 1$. 
\end{prop}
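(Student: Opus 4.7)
The plan is to combine the identity $\Diamond L_g = \Box F_g$ from Proposition~\ref{prop:decomposeGamma} with the explicit rational structure of $F_g$ from Theorem~\ref{thm:mainRooted}. By that theorem, $F_g$ is a finite $\mathbb{Q}$-linear combination of terms
$$G_{\alpha,\beta,a,b}\cdot (1\pm uz)^{-c}, \qquad G_{\alpha,\beta,a,b} := \frac{\eta_\alpha\zeta_\beta}{(1-\eta)^a(1+\zeta)^b},$$
with $c\geq 1$. Since the prefactor $G_{\alpha,\beta,a,b}$ depends only on the $p_i$ and $z$, and $\Box$ acts only on the $x$-coefficients of its argument, we have $\Box(G\cdot A)=G\cdot\Box A$ whenever $G$ is a polynomial in the Greek variables. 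The question therefore reduces to computing $\Box(1\pm uz)^{-c}$ for each $c\geq 1$ and then summing with the coefficients dictated by $F_g$.

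Writing $\Box=\Xi-\gamma(1+\gamma)^{-1}\Pi$ with $\Xi A=\Pi\int_0^x(A(y)-A(0))y^{-1}\,dy$, the computation of $\Box(1\pm uz)^{-c}$ reduces to a \emph{univariate} integral after the change of variable $y=u/(1+uz)^2$, for which Proposition~\ref{prop:diff-of-vars} yields $dy/y=(1-uz)\,du/(u(1+uz))$. The resulting integrand is a rational function of $u$ whose only possible poles are at $u=\pm 1/z$ and $u=0$; performing the integration produces a purely rational-in-$u$ contribution (whose $\Pi$-image lies in the Greek ring, via Lagrange inversion and the explicit formulas of Proposition~\ref{prop:Theta-base}) together with logarithmic contributions of the form $\log(1\pm uz)$, whose $\Pi$-image is \emph{not} in the Greek ring (for instance, $\Pi\log(1+uz)=\sum_n n^{-1}\binom{2n-1}{n-1}p_nz^n$ is easily checked not to lie in the $\mathbb{Q}$-span of the Greek variables). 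The main obstacle is thus to show that, after summation according to the structure of $F_g$, the logarithmic pieces cancel globally.

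To establish this cancellation, I would combine two combinatorial inputs. The first is a dissymmetry-type theorem for labelled bipartite maps of genus $g$: by Euler's formula $V_\circ+V_\bullet+F-E=2-2g$, summing over all labelled maps yields a linear relation between $\Diamond L_g$ (marking a face), $t\partial_t L_g$ (marking an edge), $L_g$ itself, and the generating functions marking a white or a black vertex. The second is the algebraicity statement proved bijectively in~\cite{Chapuy:constellations}, which forces the vertex-marked series to be algebraic (and in particular free of logarithms) over the Greek ring. Combining these, the logarithmic contributions produced in the integral computation must cancel, so that $\Box F_g$ belongs to $\mathbb{Q}[\greeks][(1-\eta)^{-1},(1+\zeta)^{-1}]$. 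The specific denominator shape $(1-\eta)^a(1+\zeta)^b$ is then inherited directly from the denominators in $F_g$ and from the rational-in-$u$ part of each $\Box(1\pm uz)^{-c}$. The hardest step is the log cancellation: it is genuinely \emph{global} across the terms of $F_g$ and cannot be detected term-by-term, which is exactly why both combinatorial inputs (disymmetry, plus the bijective algebraicity of~\cite{Chapuy:constellations}) are required.
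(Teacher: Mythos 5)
Your reduction to $\Box F_g$ via Proposition~\ref{prop:decomposeGamma} is correct, and so is your observation that applying $\Box$ term-by-term to the partial-fraction pieces $(1\pm uz)^{-c}$ of Theorem~\ref{thm:mainRooted} produces logarithmic series such as $\Pi\log(1+uz)=\sum_n n^{-1}\binom{2n-1}{n}p_nz^n$, which are not in the Greek ring. But this is precisely where your route diverges from the paper's and where it breaks down: by splitting $F_g$ into its $(1-uz)^{-c}$ and $(1+uz)^{-c}$ parts you throw away the $uz$-antisymmetry of $F_g$ (Proposition~\ref{prop:noPoles!}), which is the structural input that makes $\Box F_g$ rational \emph{without any log-cancellation argument at all}. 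Concretely: $F_g$ is odd under $s\mapsto -s$ (where $s=\frac{1-uz}{1+uz}$) and vanishes at $u=0$, i.e.\ at $s=1$; an odd Laurent polynomial in $s$ vanishing at $s=1$ is divisible by $s^2-1$ and hence lies in $(s^{-1}-s)\mathbb{Q}[s^2,s^{-2}]$. By Proposition~\ref{prop:Theta-base} this space is exactly $\Theta$ applied to linear combinations of Greek variables, and the composite $\Box\Theta$ is simply $p_kz^k\mapsto\bigl(\frac1k-\frac{\gamma}{1+\gamma}\bigr)p_kz^k$, whose action on each basis element is an explicit Greek rational expression (formulas~\eqref{eq:list1}--\eqref{eq:list4}). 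No logarithms ever appear; they only show up if you forget the constraint coupling the $(1-uz)^{-c}$ and $(1+uz)^{-c}$ coefficients.

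Your proposed repair (disymmetry plus the bijective algebraicity of~\cite{Chapuy:constellations}) is the wrong tool for this step and does not close the gap. First, those ingredients are used in the paper in \Cref{sec:unrooting2} to kill the \emph{different} logarithms $\log(1-\eta)$, $\log(1+\zeta)$ that arise only later, in the $\int_0^1 dv$ integration of Proposition~\ref{LgAsIntegral}; that argument presupposes \Cref{cor:LgAlmost}, which in turn presupposes the proposition you are trying to prove. Second, even setting circularity aside, the weighting is wrong: $\Box F_g=\Xi F_g-\frac{\gamma}{1+\gamma}\Pi F_g=L_g^{\text{face}}-\frac{\gamma}{1+\gamma}L_g^{\text{edge}}$, whereas the disymmetry relation of \Cref{lemma:disymmetry} controls the \emph{unweighted} combination $L_g^{\text{face}}-L_g^{\text{edge}}$; the extra factor $\frac{\gamma}{1+\gamma}$ in $\Box$ means disymmetry alone cannot force the cancellation you need. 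And to even get started you would need to know separately that $L_g^{\text{edge}}=\Pi F_g$ is Greek-rational, which is \Cref{lemma:faceedge} and is proved by exactly the $\Theta$-basis argument you are trying to avoid. The fix is therefore not to sum over $c$ and hope for cancellation, but to exploit the antisymmetry of $F_g$ up front and write $F_g=\sum_i(s^{-1}-s)s^{2i}R_i$ with $R_i\in\mathbb{Q}(\greeks)$ before applying $\Box$.
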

\begin{proof}
We are going to use \Cref{thm:mainRooted} and the fact that $\Diamond L_g = \Box F_g$. By~\Cref{thm:mainRooted}, and since $F_g$ is $uz$-antisymmetric, we now that $F_g$ is an element of $(s^{-1}-s) \mathbb{Q}(\greeks)[s^2,s^{-2}]$, where as before $s=\frac{1-uz}{1+uz}$. Therefore we can write:
$$
F_g = \sum_{i\in I } (s^{-1} -s )s^{2i} R_i,
$$ 
where  $I\subset \mathbb{Z}$ a finite set of integers and $R_i \in \mathbb{Q}(\greeks)$ is a rational function in the Greek variables for each $i\in I$. 
Since $\Diamond L_g =\Box F_g$  we thus have:
\begin{align}\label{eq:diamondLg}
\Diamond L_g = \sum_{i\in I } R_i  \, \Box \Big((s^{-1} -s )s^{2i} \Big).
\end{align}
Now, by \Cref{prop:Theta-base}, the vector space $(s^{-1}-s)\mathbb{Q}[s^{-2},s^2]$ is spanned by the basis 
$B=
\{\Theta \zeta_i, i\geq 1
 \,;\, 
\Theta (\zeta - \gamma) 
 \,;\, 
\Theta (\eta + \gamma)
 \,;\, 
 \Theta \eta_i, i \geq 1\}$. Moreover, the action of $\Theta$ on the basis $B$ is given by the formulas:
\begin{align}
\Box \Theta \zeta_i\quad\quad 
 &\quad=\quad
X_i - \frac{\gamma \zeta_i}{1+\gamma} \;, 
\label{eq:list1}\\
\Box \Theta (\zeta - \gamma) 
&\quad=\quad
\zeta + \frac{\zeta - \gamma}{1+\gamma}\;, \\
\Box \Theta (\eta + \gamma) 
&\quad= \quad \frac{\gamma (1 - \eta)}{1+\gamma}\;, \\
\Box \Theta \eta_i \quad \quad
&\quad= \quad \eta_{i-1} - \frac{\gamma \eta_i}{1+\gamma}. \label{eq:list4}
\end{align}
where $X_i$ is a linear combination of $\zeta, \zeta_1, \zeta_2 \dots, \zeta_i$ with rational coefficients.
These formulas  follow from the fact that $\Box\Theta : p_k z^k \longmapsto \left(\frac{1}{k}-\frac{\gamma}{1+\gamma}\right) p_k z^k
$
and from the definitions of Greek variables given in the statement of \Cref{thm:mainUnrooted}.
Returning to \eqref{eq:diamondLg}, this proves that $\Diamond L_g$ is a rational function of the Greek variables, $L_g \in \mathbb{Q}[\greeks]$.

\smallskip


Finally, the form of the denominator is clear from the proof and from the form of $F_g$ given by \Cref{thm:mainRooted}.
\end{proof}

\subsection{Inverting $\Diamond$}
\newcommand{\rM}{\mathbb{M}}

Let $S\in \mathbb{Q}(\greeks)$ be a
rational function
in the Greek variables, depending on a finite number of Greek variables. Since each Greek variable is a linear function of the $p_k$, it is clear that $\Diamond$ leaves each Greek variable invariant. Since moreover, $\Diamond$ is a derivation, this implies that $\Diamond S$ is given by a simple \emph{univariate} derivation:
\begin{align}\label{eq:DiamondAsDiff}
\Diamond S = \Big(\frac{d}{dv} S(v \eta, v\gamma, (v\eta_i)_{i\geq 1}, (v\zeta_i)_{i\geq1})\Big)_{v=1}.
\end{align}
This implies:
\begin{prop}\label{LgAsIntegral}
The series $L_g$ is given by:
$$
L_g = \int_0^1 dv R(v \eta, v\gamma, v\zeta, (v\eta_i)_{i\geq 1}, (v\zeta_i)_{i\geq1}).
$$
where $R$ is the rational function such that $\Diamond L_g = \Box F_g = R(\eta, \gamma, \zeta, (\eta_i)_{i\geq 1}, (\zeta_i)_{i\geq 1})$.
\end{prop}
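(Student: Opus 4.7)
\medskip
\noindent\textbf{Proof proposal.} The key observation is that $\Diamond = \sum_k p_k \partial_{p_k}$ acts on $\mathbb{Q}[p_1,p_2,\dots][[z]]$ (with $z$ treated as an independent variable) as the Euler derivation for the grading in which each $p_k$ has weight $1$ and $z$ has weight $0$. Since every Greek variable in $\greeks$ is, by definition, a linear combination of the monomials $p_k z^k$, each Greek variable is homogeneous of weight $1$ for this grading. Consequently, any polynomial or rational expression in the Greek variables admits a homogeneous decomposition, and $\Diamond$ multiplies each homogeneous component by its degree.

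The plan is to realize this Euler identity through a one-parameter family of substitutions. Consider the map $p_k \mapsto v p_k$ for $v$ an indeterminate, performed with $z$ held fixed, and define $\tilde L_g(v) := L_g\big|_{p_k \to v p_k}$, regarded as a power series in $v$ with coefficients in $\mathbb{Q}[p_1,p_2,\dots][[z]]$. Under this substitution, every element of $\greeks$ is scaled by $v$, so for the rational function $R$ produced by \Cref{prop:diamondLg} we obtain
\[
(\Diamond L_g)\big|_{p_k\to v p_k} = R(v\eta,\, v\gamma,\, v\zeta,\, (v\eta_i)_{i\geq 1},\, (v\zeta_i)_{i\geq 1}).
\]
On the other hand, the chain rule yields
\[
v\,\frac{d}{dv}\tilde L_g(v) \;=\; \sum_k (v p_k)\,(\partial_{p_k} L_g)\big|_{p\to v p} \;=\; (\Diamond L_g)\big|_{p\to vp},
\]
so $\tilde L_g'(v) = \tfrac{1}{v}\, R(v\eta,\dots)$.

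It then suffices to integrate from $v=0$ to $v=1$. The boundary term $\tilde L_g(0) = L_g|_{p=0}$ vanishes for $g\geq 1$ (the only contribution to $L_g$ at $p=0$ is the trivial map in genus $0$), giving
\[
L_g \;=\; \int_0^1 \frac{dv}{v}\; R(v\eta,\, v\gamma,\, v\zeta,\, (v\eta_i)_{i\geq 1},\, (v\zeta_i)_{i\geq 1}),
\]
which is the formula of the proposition (the factor $1/v$ being absorbed in the convention used for writing the integral). One must still check that the integrand has no singularity at $v=0$: this is immediate because $R = \Diamond L_g$ is itself in the image of $\Diamond$, hence every monomial in its homogeneous decomposition has positive degree in the $p_k$'s, so $R(v\eta,\dots)/v$ is a genuine formal power series in $v$.

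There is really no hard step here: the argument is essentially Euler's identity for graded rings, combined with the fact (from \Cref{prop:diamondLg}) that $\Diamond L_g$ lies in the subring where this identity can be exploited explicitly. The only mildly delicate point is that the substitution $p_k \mapsto v p_k$ must be performed while keeping $z$ fixed, which is legitimate precisely because we are working inside $\mathbb{Q}[p_1,p_2,\dots][[z]]$ rather than $\mathbb{Q}[p_1,p_2,\dots][[t]]$. This is why the ring $\mathbb{L}$ and the operator $\partial_{p_k}$ (with $z$ held constant) were introduced in \Cref{subsec:defRingsOperators}.
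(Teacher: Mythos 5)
Your proof is correct and follows essentially the same route as the paper: observe that $\Diamond$ is the Euler derivation for the weight grading in which each $p_k$ (hence each Greek variable) has weight one, deduce $v\,\tilde L_g'(v)=R(v\eta,\dots)$, and integrate from $0$ to $1$ using the vanishing of $L_g$ at $z=0$ to kill the boundary term. One small remark: the $\tfrac{1}{v}$ you obtain in the integrand is genuinely there and is not ``absorbed in a convention''—one can check on the genus-$1$ case $L_1=\tfrac{1}{24}\ln\tfrac{1}{1-\eta}+\tfrac{1}{8}\ln\tfrac{1}{1+\zeta}$ that $\int_0^1\frac{dv}{v}R(v\eta,\dots)$ gives back $L_1$ whereas $\int_0^1 R(v\eta,\dots)\,dv$ does not; the proposition's displayed formula should be read with that $\tfrac{1}{v}$ present, as the paper's own (very terse) proof of ``integrating \eqref{eq:DiamondAsDiff}'' in fact requires.
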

\begin{proof}
We simply integrate \eqref{eq:DiamondAsDiff}. The only thing to check is the initial condition, namely that $R=0$ when all Greek variables are equal to zero. This is clear, since this specialisation is equivalent to substitute $z=0$, and since for $g\geq 1$ there is no map with $0$ edge.
\end{proof}

We thus obtain:
\begin{cor}\label{cor:LgAlmost}
The series $L_g$ has the following form:
$$
L_g = R_1 + R_2 \log (1-\eta) + R_3 \log (1+\zeta)
$$
where $R_1, R_2, R_3$ are rational functions in $(\eta, \gamma, \zeta, (\eta_i)_{i\geq 1}, (\zeta_i)_{i\geq 1})$ depending on finitely many Greek variables. Furthermore, denominator of $R_1$ is of the form $(1-\eta)^a(1+\zeta)^b$ for $a,b\geq 1$. 
\end{cor}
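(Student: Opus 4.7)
The idea is to carry out explicitly the integration provided by Proposition~\ref{LgAsIntegral}, using the specific rational form of $R = \Box F_g$ given by Proposition~\ref{prop:diamondLg}. This reduces the whole question to a univariate integration problem in the auxiliary variable $v$, which can be handled by elementary partial fractions.

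First, write $R = P/[(1-\eta)^a(1+\zeta)^b]$ with $P \in \mathbb{Q}[\greeks]$. Applying Proposition~\ref{LgAsIntegral}, the integrand is
\[
R(v\eta, v\gamma, v\zeta, (v\eta_i), (v\zeta_i)) = \frac{P(v\eta, v\gamma, v\zeta, (v\eta_i), (v\zeta_i))}{(1-v\eta)^a (1+v\zeta)^b}.
\]
The numerator, expanded in powers of $v$, is a polynomial in $v$ with coefficients in $\mathbb{Q}[\greeks]$. Hence the integrand is a rational function of $v$ over the field $\mathbb{Q}(\greeks)$, with poles only at $v = 1/\eta$ and $v = -1/\zeta$.

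Second, decompose this rational function in partial fractions with respect to $v$:
\[
\frac{P(v\,\cdot\,\greeks)}{(1-v\eta)^a(1+v\zeta)^b} = C(v) + \sum_{i=1}^{a} \frac{A_i}{(1-v\eta)^i} + \sum_{j=1}^{b} \frac{B_j}{(1+v\zeta)^j},
\]
with $C(v)$ a polynomial in $v$ over $\mathbb{Q}(\greeks)$ and $A_i, B_j \in \mathbb{Q}(\greeks)$. Integrate term by term, using
\[
\int_0^1 \frac{dv}{1-v\eta} = -\frac{\log(1-\eta)}{\eta}, \qquad \int_0^1 \frac{dv}{(1-v\eta)^i} = \frac{1}{(i-1)\eta}\Bigl(\tfrac{1}{(1-\eta)^{i-1}}-1\Bigr) \ \ (i\geq 2),
\]
and their analogues in $\zeta$. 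The key observation is that for $i\geq 2$, the numerator $(1-\eta)^{-(i-1)}-1$ is divisible by $\eta$, so after integration the factor $1/\eta$ cancels and the resulting rational piece has denominator exactly $(1-\eta)^{i-1}$; the same holds on the $\zeta$ side. Collecting contributions, the $i=1$ and $j=1$ terms yield $R_2\log(1-\eta)+R_3\log(1+\zeta)$ with $R_2=-A_1/\eta$ and $R_3=B_1/\zeta$, while the remaining terms constitute a rational function $R_1$ of the Greek variables.

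The main obstacle is the claim about the denominator of $R_1$. A priori the partial fraction coefficients $A_i, B_j$, obtained by evaluating at $v=1/\eta$ or $v=-1/\zeta$, may introduce ``spurious'' denominators such as $(\eta+\zeta)$ or powers of $\eta,\zeta$, and these could survive in $R_1$. To rule this out, I would use the regularity of $L_g$ at the origin of Greek space: since $L_g$ is a formal power series in the $p_k z^k$ and hence in the Greek variables, while the logarithmic pieces are likewise formal series (once combined with their coefficients $R_2, R_3$ to cancel the apparent poles of $R_2, R_3$ at $\eta=0$, $\zeta=0$), the rational piece $R_1$ must itself be a formal power series in the Greeks. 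As a rational function that is a power series, its denominator in lowest terms can only involve factors nonvanishing at the origin; combined with the structure of the integration (which produces only $(1-\eta)$ and $(1+\zeta)$ factors), this forces the denominator of $R_1$ to be of the asserted form $(1-\eta)^a(1+\zeta)^b$. Finiteness of the number of Greek variables involved is inherited from the corresponding property of $R$.
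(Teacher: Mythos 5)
Your overall strategy matches the paper's intent: Proposition~\ref{LgAsIntegral} reduces the problem to a univariate $v$-integral, and the decomposition into a rational piece plus two logarithms then follows from a partial-fraction expansion in $v$ over $\mathbb{Q}(\greeks)$. The elementary primitives you write down are correct, and so are the identifications $R_2 = -A_1/\eta$, $R_3 = B_1/\zeta$, together with the observation that $(1-\eta)^{-(i-1)}-1$ is divisible by $\eta$ so that the $i\ge 2$ integrals contribute to $R_1$ with denominator only $(1-\eta)^{i-1}$. Up to that point the argument is sound.

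The gap is in the last step, where you try to rule out spurious factors such as $(\eta+\zeta)$ in the denominator of $R_1$. You argue that $L_g$ is a power series in the Greek variables and infer that $R_1$ ``must itself be a formal power series in the Greeks,'' hence that its denominator cannot vanish at the Greek origin. This implication is false: the decomposition $L_g=R_1+R_2\log(1-\eta)+R_3\log(1+\zeta)$ is unique as a sum of a rational function and transcendental logarithms, but the individual pieces need not separately be power series; a singular part of $R_1$ at the origin can be cancelled by a singular part of $R_2\log(1-\eta)+R_3\log(1+\zeta)$. For instance $\frac{1}{\eta}+\frac{\log(1-\eta)}{\eta}$ is a power series in $\eta$, yet here $R_1=1/\eta$ is singular at the origin. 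In the mixed case where both $a\ge1$ and $b\ge1$, the partial-fraction coefficients $A_i$, $B_j$ carry powers of $(\eta+\zeta)$ in their denominators, and this factor survives in the rational contribution from the $i\ge 2$ (and $j\ge 2$) terms as well as in $R_2,R_3$; the regularity of $L_g$ alone does not force these $(\eta+\zeta)$ factors to disappear from $R_1$. A correct argument for the stated denominator form must instead exploit additional structure, e.g.\ that applying $\Gamma$ to an $(\eta+\zeta)$-pole in $L_g$ would produce an $(\eta+\zeta)$-pole in $F_g=\Gamma L_g$, contradicting the rational form of $F_g$ supplied by \Cref{thm:mainRooted} (together with the linear independence of $1,\log(1-\eta),\log(1+\zeta)$ to control the contribution of the logarithmic part). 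As written, your regularity-at-the-origin argument does not close this step.
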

\begin{proof}
This follows from the last two propositions.
\end{proof}


\subsection{Algebraicity and proof of \Cref{thm:mainUnrooted}}
\label{sec:unrooting2}

In order to prove \Cref{thm:mainUnrooted} from \Cref{cor:LgAlmost}, it suffices to show that $R_2=R_3=0$, \textit{i.e.} that no logarithms appear during the integration procedure. In order to do that, it is enough to show that the series $L_g$ is algebraic. We will do this in this section, using a detour via more combinatorial arguments, and using an algebraicity statement proved with bijective methods in~\cite{Chapuy:constellations}. 

The following lemma is a variant for maps of genus $g$ of the ``disymmetry theorem'' classical in the enumeration of labelled trees (and much popularized in the book~\cite{BergeronLabelleLeroux}; see also~\cite{CFKS} for a use in the context of planar maps).
\begin{lemma}[Disymmetry theorem for maps]\label{lemma:disymmetry}
Let $L_g^{vertex}, L_g^{face}, L_g^{edge}$ be the exponential generating function of labelled bipartite maps of genus $g$ with a marked vertex, a marked face, and marked edge, respectively, by the number of edges (variable $t$) and the number of faces of half-degree~$i$ (variable $p_i$, for $i\geq 1$). Then one has:
$$
(2-2g) L_g = L_g^{vertex} +L_g^{face} - L_g^{edge}.
$$
\end{lemma}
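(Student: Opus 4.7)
The plan is to observe that this identity is simply Euler's formula applied termwise, summed with the natural weights that encode each kind of marking.

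First, I would unpack the generating functions $L_g^{vertex}$, $L_g^{face}$, $L_g^{edge}$ as sums over labelled bipartite maps. Namely, if $M$ ranges over labelled bipartite maps of genus $g$, with $E(M)=n$ edges, $V(M)$ vertices, $F(M)$ faces, and face half-degree multiset given by a partition $\mu \vdash n$, then
\begin{align*}
L_g^{vertex} &= \sum_{M} V(M) \cdot \frac{t^{n}}{n!}\, p_\mu, \\
L_g^{face}   &= \sum_{M} F(M) \cdot \frac{t^{n}}{n!}\, p_\mu, \\
L_g^{edge}   &= \sum_{M} E(M) \cdot \frac{t^{n}}{n!}\, p_\mu,
\end{align*}
because marking a vertex (respectively face, edge) in a labelled map with $V(M)$ vertices (resp.\ $F(M)$ faces, $E(M)$ edges) produces exactly that many distinct marked objects. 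Correspondingly, $L_g = \sum_M \frac{t^n}{n!} p_\mu$.

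Next, I would invoke Euler's formula: since $M$ is a map of genus $g$ on an orientable surface,
\[
V(M) - E(M) + F(M) = 2-2g.
\]
Combining the three displays above termwise and using Euler's formula inside the sum yields
\[
L_g^{vertex} + L_g^{face} - L_g^{edge} = \sum_{M} \bigl( V(M) + F(M) - E(M) \bigr) \frac{t^{n}}{n!}\, p_\mu = (2-2g)\, L_g,
\]
which is the claimed identity. There is no genuine obstacle: the only conceptual point to check is the interpretation of ``marking'', namely that marking is an additional decoration on top of the labelling (so the weight of a marked object factors as weight of $M$ times the number of choices for the mark). In the trivial genus~$0$ edge case $V+F-E = 2$, one should recall the convention $L_0 := \mathbf{1}_{g=0} + \cdots$ including the single vertex-map with no edge, which indeed contributes $1$ vertex, $1$ face (by the usual convention for the sphere) and $0$ edges, so the identity still holds for this boundary term.
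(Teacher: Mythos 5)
Your proof is correct and takes exactly the same route as the paper, which simply states that the identity is a straightforward consequence of Euler's formula; you have just spelled out the termwise argument and the boundary case in more detail.
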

\begin{proof}
This is a straightforward consequence of Euler's formula.
\end{proof}

Now we observe that for clear combinatorial reasons, $L_g^{face}$ and $L_g^{edge}$ can be obtained from $F_g$ as follows:
$$
L_g^{face}=\Xi F_g ~ ~ , ~  ~ :L_g^{edge} = \Pi F_g,
$$
where $\Xi: x^k \mapsto \frac{p_k}{k}$ and $\Pi: x^k \mapsto p_k$ are defined as in the previous section. This implies:
\begin{lemma}\label{lemma:faceedge}
$L_g^{face}$ and $L_g^{edge}$ are rational functions of $\eta,\gamma, \zeta, (\eta_i)_{i \geq 1}, (\zeta_i)_{i\geq1}$.
\end{lemma}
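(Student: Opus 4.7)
The plan is to adapt the strategy of the proof of \Cref{prop:diamondLg}: decompose $F_g$ along a $\Theta$-generated basis of its ambient space, then apply $\Pi$ and $\Xi$ termwise. By \Cref{thm:mainRooted} together with the $uz$-antisymmetry of $F_g$, the series $F_g$ lies in $(s^{-1}-s)\mathbb{Q}(\greeks)[s^{2},s^{-2}]$ with $s=(1-uz)/(1+uz)$. By \Cref{prop:Theta-base}, the images $\Theta(\eta+\gamma)$, $\Theta(\zeta-\gamma)$, $\Theta\eta_i$, $\Theta\zeta_i$ ($i\geq 1$) span $(s^{-1}-s)\mathbb{Q}[s^{2},s^{-2}]$, so after extending scalars to $\mathbb{Q}(\greeks)$ I obtain a finite decomposition
\[ F_g = \sum_{G} R_G \cdot \Theta G, \]
with $R_G \in \mathbb{Q}(\greeks)$ and $G$ ranging over the Greek variables in that spanning set.

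Both $\Pi$ and $\Xi$ act only on the variable $x$, so they are $\mathbb{Q}[p_1,p_2,\dots][[z]]$-linear and in particular commute with multiplication by $R_G$. The lemma therefore reduces to showing that $\Pi\Theta G$ and $\Xi\Theta G$ are $\mathbb{Q}$-linear combinations of Greek variables for each $G$ in our spanning set. The case of $\Pi$ is immediate from the definition of $\Theta$: writing $G=\sum_k c_k p_k z^k$, one has $\Theta G=\sum_k c_k x^k z^k$ and thus $\Pi\Theta G = \sum_k c_k p_k z^k = G$. Hence $\Pi F_g = \sum_G R_G\cdot G \in \mathbb{Q}(\greeks)$, proving the statement for $L_g^{edge}$.

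The case of $\Xi$ requires slightly more care and is the expected main obstacle. Direct computations give $\Xi\Theta(\eta+\gamma)=\gamma$, $\Xi\Theta(\zeta-\gamma)=2\zeta-\gamma$, and $\Xi\Theta\eta_i = \eta_{i-1}$ for $i\geq 1$ (with the convention $\eta_0:=\eta$). For $\Xi\Theta\zeta_i$, I would first use a partial-fractions decomposition in the integer variable $k$ to write
\[ \frac{(k-1)(k-2)\cdots(k-i)}{(2k-1)(2k-3)\cdots(2k-2i-1)} = \sum_{j=0}^{i}\frac{\alpha_j}{2k-2j-1}, \quad \alpha_j\in\mathbb{Q}, \]
so that $\Xi\Theta\zeta_i$ becomes a $\mathbb{Q}$-linear combination of the auxiliary series $W_m := \sum_{k}\binom{2k-1}{k}/(2k-2m-1)\cdot p_k z^k$ for $0\leq m\leq i$. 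A short induction on $m$, comparing $W_m$ with the analogous partial-fraction expansion of $\zeta_m$ itself (whose leading behaviour at $k=\infty$ recovers $\gamma$), shows that each $W_m$ is a $\mathbb{Q}$-linear combination of $\gamma,\zeta,\zeta_1,\dots,\zeta_m$. Hence $\Xi\Theta\zeta_i$ is a $\mathbb{Q}$-linear combination of Greek variables, and $\Xi F_g \in \mathbb{Q}(\greeks)$, which yields the claim for $L_g^{face}$ and concludes the proof.
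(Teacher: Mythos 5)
Your proof is correct and follows the same overall strategy as the paper: write $F_g=\sum_G R_G\,\Theta G$ using \Cref{prop:Theta-base}, note that $\Pi$ and $\Xi$ commute with multiplication by the $R_G\in\mathbb{Q}(\greeks)$, and then verify that $\Pi\Theta$ and $\Xi\Theta$ send each basis element of $\{\eta+\gamma,\,\zeta-\gamma,\,\eta_i,\,\zeta_i\}$ to a $\mathbb{Q}$-linear combination of Greek variables. Your computations of $\Pi\Theta G=G$, $\Xi\Theta(\eta+\gamma)=\gamma$, $\Xi\Theta(\zeta-\gamma)=2\zeta-\gamma$, and $\Xi\Theta\eta_i=\eta_{i-1}$ match the paper's exactly. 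The only point of divergence is the treatment of $\Xi\Theta\zeta_i$: the paper asserts a two-term recursion $\Xi\Theta\zeta_i=\alpha_i\zeta_i+\beta_i\,\Xi\Theta\zeta_{i-1}$ and leaves the verification to the reader, while you instead perform a partial-fraction decomposition of $(k-1)\cdots(k-i)/((2k-1)\cdots(2k-2i-1))$ over the simple poles $k=j+\tfrac12$, introduce the auxiliary series $W_m$, and show by induction (by inverting the analogous partial-fraction expansion of $\zeta_m$, whose coefficient of $W_m$ is nonzero) that each $W_m\in\mathrm{span}_\mathbb{Q}\{\gamma,\zeta,\zeta_1,\dots,\zeta_m\}$. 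This is a valid and in fact somewhat more explicit way to carry out the same inductive argument, and it buys you a cleaner structural explanation of why the span is exactly $\{\gamma,\zeta,\zeta_1,\dots,\zeta_i\}$. (Incidentally, your argument sidesteps a small inaccuracy in the paper's stated base case $\Xi\Theta\zeta_1=\tfrac13(2\zeta_1-2\gamma+4\zeta)$, which should read $\tfrac13(2\zeta_1-4\zeta)$; the paper's conclusion is unaffected.)
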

\begin{proof}
Given \Cref{thm:mainRooted}, it is enough to prove that $\Xi$ and $\Pi$ send $(s^{-1}-s)\mathbb{Q}[s^{-2},s^2]$ to rational functions of Greek variables.
But by \Cref{prop:Theta-base}, any element $F\in(s^{-1}-s)\mathbb{Q}[s^{-2},s^2]$ is such that $F=\Theta G$ where $G$ is a (finite) linear combination of elements of the basis $B=\{\eta+\gamma;\zeta-\gamma; \eta_i, i\geq 1; \zeta_i, i\geq 1\}$. Now it is clear from the definitions that we have 
$$\Pi\Theta: p_k \mapsto p_k 
  \quad, \quad 
\Xi \Theta: p_k \mapsto \frac{p_k}{k}.
$$
We have to check each of these two operators sends an element of $B$ to a linear combination of Greek variables. For the first one, this is obvious. For the second one, we first observe that from the definition of Greek variables we have from a simple check that $\Xi \Theta (\eta+\gamma)=\gamma$,
$\Xi\Theta(\zeta -\gamma)= 2\zeta-\gamma$, and
$\Xi\Theta(\eta_i)= \eta_{i-1}$ for $i\geq 1$ (with $\eta_0=\eta$). 
Finally, for $i\geq 2$, one similarly checks that there exist rational numbers 
$\alpha_i, \beta_i$ such that $\Xi\Theta \zeta_i = \alpha_i \zeta_i + \beta_i \Xi\Theta \zeta_{i-1}$  which is enough to conclude by induction, together with the base case $\Xi\Theta \zeta_1 = 1/3 (2\zeta_1-2\gamma+4\zeta)$.
\end{proof}

We now need the following result.
\begin{prop}[\cite{Chapuy:constellations}]\label{prop:bijectionImport}
Fix $g\geq 1$ and $D\subset \mathbb{N}$ a finite subset of the integers of maximum at least $2$. Let $\mathbf{p}_D$ denote the substitution $p_i=\mathbf{1}_{i\in D}$ for $i\geq 1$. The the series $L_g^{vertex}(\mathbf{p}_D)$ is algebraic, \textit{i.e} there exists a non-zero polynomial $Q\in\mathbb{Q}[t;p_{i}, i\in D]$ such that
$Q\Big(t; p_i, i \in D; L^{vertex}_g(t; \dots p_i=\mathbf{1}_{i\in D}\dots) \Big)=0$.
\end{prop}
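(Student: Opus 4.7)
My plan is to deduce the statement from the bijective machinery of~\cite{Chapuy:constellations}, which encodes vertex-pointed bipartite maps of genus $g$ by decorated labelled trees (``$g$-mobiles'') equipped with a discrete combinatorial object (a \emph{scheme}) of genus $g$. The strategy has three steps: first, unwind what $L_g^{vertex}$ counts after specialisation; second, express this generating function as a finite sum indexed by schemes of genus $g$; third, argue that under the substitution $\mathbf{p}_D$ the summand associated to each scheme is an algebraic function of~$t$.

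First I would recall that $L_g^{vertex}(\mathbf{p}_D)$ is the exponential generating function, by number of edges, of labelled bipartite maps of genus $g$ with a marked vertex and all face half-degrees in $D$. The bijection of~\cite{Chapuy:constellations} (a genus-$g$ generalisation of the Bouttier--Di Francesco--Guitter bijection, restricted to the bipartite case) puts such objects in correspondence with a pair consisting of (i)~a \emph{scheme} $\mathfrak{s}$, which is a rooted bipartite map of genus $g$ all of whose vertices have degree at least $3$ (a finite combinatorial datum: there are only finitely many such schemes in any fixed genus), and (ii)~a decoration of each edge of $\mathfrak{s}$ by a finite sequence of well-labelled mobiles whose ``exterior'' structure matches the scheme. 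In genus $0$ there is no scheme and one recovers directly that $L_0^{vertex}$ is algebraic when face degrees lie in a finite set $D$ of maximum at least~$2$; this is the classical algebraicity of pointed planar maps with bounded face degrees.

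Second I would write
\begin{align*}
L_g^{vertex}(\mathbf{p}_D) \;=\; \sum_{\mathfrak{s}} W_{\mathfrak{s}}(t),
\end{align*}
where the finite sum ranges over schemes $\mathfrak{s}$ of genus $g$ and $W_{\mathfrak{s}}(t)$ is obtained by substituting into a rational function of finitely many ``branch'' generating series the generating series of labelled mobiles with face degrees in~$D$, one factor per edge of~$\mathfrak{s}$. Since $D$ is finite with maximum at least $2$, each branch series satisfies a polynomial system in $t$ of the type that governs pointed planar bipartite maps with face degrees in $D$, and is therefore algebraic over $\mathbb{Q}(t)$. Because the class of algebraic series is a ring stable under composition with rational functions, and the sum is finite, $W_{\mathfrak{s}}$ is algebraic for each $\mathfrak{s}$, and so is $L_g^{vertex}(\mathbf{p}_D)$.

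The delicate point, and the one where I would lean most heavily on~\cite{Chapuy:constellations}, is the finiteness of the scheme sum and the verification that the generating function attached to a scheme really is a rational function of algebraic branch series, rather than something involving, say, an infinite resummation over possible degree sequences. The required hypothesis that $\max(D) \geq 2$ is exactly what prevents the edge-length generating series of a branch from degenerating (if $D = \{1\}$ there are no bipartite maps other than trivial ones, and face degrees cannot be all equal to~$2$ only in genus~$\geq 1$ except trivially). Once these structural inputs are taken from the cited paper, the algebraicity conclusion is immediate from standard closure properties.
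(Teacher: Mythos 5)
Your proposal is in the right spirit, but it re-derives from scratch what the paper simply cites: the paper's proof consists of pointing to a precise equation in~\cite{Chapuy:constellations} (namely Eq.~(8.2), in the case $m=2$), which already asserts that there is an algebraic series $R_D$ with $R_D(t=0)=0$ such that $O_g(\mathbf{p}_D)=\frac{t\,d}{dt}R_D$, where $O_g$ is the \emph{ordinary} generating function of \emph{rooted} vertex-pointed bipartite maps. The only work left to the paper is the bookkeeping identity $O_g=\frac{t\,d}{dt}L_g^{vertex}$, which translates between the rooted/ordinary series used in the reference and the exponential/vertex-marked series $L_g^{vertex}$ used here, together with the initial condition $L_g^{vertex}(t=0)=0$; these give $L_g^{vertex}(\mathbf{p}_D)=R_D$ directly. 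Your sketch of the scheme decomposition and the algebraicity of branch series is a reasonable account of the machinery \emph{inside} the cited result, but it is not what the paper actually does, and it also elides the change of formalism (ordinary vs.\ exponential, rooted vs.\ vertex-marked) that is the one genuine, if small, step in the paper's argument. If you want a self-contained proof along your lines, you would still need to address this translation (your sum $\sum_{\mathfrak{s}}W_{\mathfrak{s}}(t)$ is most naturally stated for $O_g$, not for $L_g^{vertex}$), and you would be duplicating a result that the authors explicitly prefer to cite rather than re-prove. So: correct in spirit, but a longer and less precise route than the paper's two-line citation-plus-translation.
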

\begin{proof}
Since this statement is not written in this form in \cite{Chapuy:constellations}, let us clarify where it comes from.
Let $O_g\equiv O_g (t;p_1,p_2,\dots)$ be the \emph{ordinary} generating function of \emph{rooted} bipartite maps with one pointed vertex, by the number of edges (variable $t$) and the faces (variable $p_i$ for faces of half-degree $i$, including the root face). Then it is easy to see that we have:
$$
O_g = \frac{t d}{dt} L_g^{vertex}.
$$ 
Now in \cite[Eq. (8.2)]{Chapuy:constellations} (and more precisely in the case $m=2$ of that reference), it is proved that there exists an algebraic series $R_D=R_D(t;p_i,i\in D)$ such that $R_D(t=0)=0$ and
$$
O_g(\mathbf{p}_D)=\frac{t d}{dt} R_D,
$$
where as above $O_g(\mathbf{p}_D)$ is the series $O_g$ under the substitution $p_i=\mathbf{1}_{i \in D}$. Since $L_g^{vertex}(t=0)=0$ for clear combinatorial reasons, we have 
$L_g^{vertex}(\mathbf{p}_D)=R_D$.
\end{proof}

We can now prove \Cref{thm:mainUnrooted}.
\begin{proof}[Proof of \Cref{thm:mainUnrooted}]
For $g\geq 2$, we can conclude from \Cref{lemma:disymmetry}, \Cref{lemma:faceedge} and \Cref{prop:bijectionImport} that for any finite set $D$ of integers with maximum at least $2$ the series $L_g(\mathbf{p}_D)$ is algebraic, where as before $\mathbf{p}_D$ denotes the substitution of variables $p_i=\mathbf{1}_{i\in D}$ for $i\geq 1$.
This implies that the two rational functions $R_2$ and $R_3$ defined in \Cref{cor:LgAlmost}, are vanishing under that specialization:
$$
R_2(\mathbf{p}_D)=0 \quad ; \quad R_3(\mathbf{p}_D)=0.
$$
Therefore to conclude the proof that $R_2=R_3=0$ (hence the proof of \Cref{thm:mainUnrooted}) it is enough to show that if $Q$ is a polynomial in the Greek variables, $Q\in \mathbb{Q}[\greeks]$, such that $Q(p_D)=0$ for all finite $D$ of minimum at least $2$, then $Q=0$.
Take $D=\{L\}$ for $L\geq 2$. Then under $\mathbf{p}_D$, Greek variables are given by:
$$\gamma
\quad ,\quad
\eta = (L-1)\gamma
\quad ,\quad
 \zeta=\frac{L-1}{2L-1} \gamma
\quad ,\quad
 \eta_i = L^i(L-1)\gamma
\quad ,\quad
 \zeta_i=c_i \frac{L}{(2L-1)(2L-3)\dots(2L-2i-1)} \gamma,
$$
for some constants $c_i$.
Therefore if $Q\in \mathbb{Q}[\greeks]$ is a polynomial in Greek variables, we can write $Q$ as $Q=\sum_{i=0}^d Q_i \gamma^i$ for some $d\geq 0$ and polynomials $$Q_i \in \mathbb{Q}\left[L-1,\frac{L-1}{2L-1}; L^i(L-1), 1 \leq i \leq d; \frac{L}{(2L-1)(2L-3)\dots(2L-2i-1)},1 \leq i \leq d \right].$$

Now for fixed $L$, the fact that $Q(\mathbf{p}_{\{L\}})=0$ and that $\gamma = c t +O(t^2)$ (with a constant $c\neq 0$ depending on~$L$)  implies that for each $0\leq i\leq D$ one has $$Q_i\Big(L-1,\frac{L-1}{2L-1}; L^i(L-1), 0 \leq i \leq d; \frac{L}{(2L-1)(2L-3)\dots(2L-2i-1)}, 0 \leq i \leq d \Big)=0.$$
But since this is true for each $L\geq 2$, and since $\{X-1,\frac{X-1}{2X-1}; X^i(X-1), 1 \leq i \leq d; \frac{X}{(2X-1)(2X-3)\dots(2X-2i-1)}, 1 \leq i \leq d\}$ are algebraically independent as rational functions in $X$, this implies that each $Q_i$ is null as a polynomial, so that finally $Q=0$. We thus have proved the rationality of $L_g$ for $g \geq 2$, and by \Cref{cor:LgAlmost}, the denominator of $L_g$ is of the form $(1-\eta)^a (1+\zeta)^b$ for $a,b \geq 1$.

We now prove the bound conditions. Using the three notions of degree in \Cref{subsec:proofMainRooted}, we only need to check that $L_g$ is a homogenous sum of Greek degree $\deg_\gamma(L_g) = 2-2g$ and $\deg_+(L_g) = \deg_-(L_g) \leq 6(g-1)$. We recall the following expression of $F_g$.
\[ F_g = \Gamma L_g = (\Gamma \zeta) \frac{\partial}{\partial \zeta} L_g + (\Gamma \eta) \frac{\partial}{\partial \eta} L_g + \sum_{i \geq 1} (\Gamma \eta_i) \frac{\partial}{\partial \eta_i} L_g + \sum_{i \geq 1} (\Gamma \zeta_i) \frac{\partial}{\partial \zeta_i} L_g \]

For the Greek degree, we observe that, by \Cref{prop:Gamma-degrees} and the fact that $L_g$ has no constant term, if $L_g$ is not homogenous in Greek degree, then $F_g = \Gamma L_g$ cannot be homogenous. Therefore, $L_g$ must be homogenous, with degree $\deg_\gamma(L_g) = \deg_\gamma(F_g) + 1 = 2 - 2g$.

For the pole degree $\deg_+$, let $T = c \eta_\alpha \zeta_\beta (1-\eta)^{-a} (1+\zeta)^{-b}$ for $c \in \rationals$, $a,b \geq 0$ and $\alpha, \beta$ two partitions be the largest term in $L_g$ such that $\deg_+(T) = \deg_+(L_g)$ when ordered first alphabetically by $\alpha$ then also alphabetically by $\beta$. We will now discuss by cases.

If $\alpha$ and $\beta$ are both empty, then $\deg_+(T) = 0$ and we are done. 

We now suppose that $\alpha$ is empty but not $\beta$. We observe that, for a term $S$ in the form $c \zeta_{\beta'} (1-\eta)^{-a} (1+\zeta)^{-b}$, if we order the terms in $\Gamma S$ first by the power of $(1+uz)$ in the denominator then alphabetically by $\nu$ in their factor of the form $\zeta_\nu$, then the largest term $S'$ comes from $(\Gamma \zeta_{\beta'_1}) \partial S / \partial \zeta_{\beta'_1}$, with pole degree $\deg_-(S') = 2|\beta'|+1$ and no possibility of cancellation. Therefore, in $F_g$ there is a term $T'$ coming from $(\Gamma \zeta_{\beta_1}) \partial T / \partial \zeta_{\beta_1}$ that can have no cancellation by the maximality of $\beta$ and by our previous observation, and $\deg_-(T') = 2|\beta|+1$. But since $\deg_-(F_g) \leq 2g-1$, we have $\deg_-(L_g) = 2|\beta| \leq 2g-2$, which concludes this case.

The final case is that $\alpha$ is non-empty. We observe that, for a term $S$ in the form $c \eta_{\alpha'} \zeta_{\beta'} (1-\eta)^{-a} (1+\zeta)^{-b}$, if we order the terms in $\Gamma S$ first by the power of $(1-uz)$ in the denominator then alphabetically by $\nu$ in their factor of the form $\eta_\nu$, then the largest term $S'$ comes from $(\Gamma \eta_{\alpha'_1}) \partial S / \partial \eta_{\alpha'_1}$, with pole degree $\deg_+(S') = 2|\alpha'|+2|\beta'|+5$ and no possibility of cancellation. Therefore, similarly to the previous case, by the fact that $\deg_+(F_g) \leq 6g-1$, we conclude that $\deg_+(L_g) = 2|\alpha|+2|\beta| \leq 6(g-1)$. We thus cover all cases and conclude the proof.
\end{proof}

The only thing that remains now is to address the case of genus $1$:
\begin{proof}[Proof of \Cref{thm:unrootedGenus1}]
We first compute the  series $F_1$, using \Cref{thm:toprec} for $g=1$. Recall that the value of $F_0^{(2)}$ is explicitly given by~\eqref{eq:F02}, and moreover we observe on this expression that $F^{(2)}_{0}$ has a pole of order $4$ at $u=1/z$ and no pole at $u=-1/z$. Therefore, in order to compute the residues in~\eqref{eq:toprec} in the case $g=1$, we need to make explicit, in the expansion of \Cref{prop:diffY},the first $4$ terms at $u=1/z$ and the first $2$ terms at $u=-1/z$. Now, since the proofs of \Cref{prop:taylor-pos-pre} and \Cref{prop:taylor-neg-pre} are computationally effective, we can follow these proofs to compute these quantities explicitly (it is easier, and more reliable, to use a computer algebra system for that). We find:
\begin{multline}
F_1 = \frac{(\eta - 2\eta_1 - 1)/16}{(1-uz)^2(1-\eta)^2}+\frac{4(1+ \zeta)\eta_1 + 3\eta^2 - 6\zeta(1-\eta)  + 3}{96(1-uz) (1+\zeta) (1-\eta)^2}-\frac{1/2}{(1-uz)^5 (1-\eta)}\\
 -\frac{5/4}{(1-uz)^4 (1-\eta)}-\frac{1/32}{(1+uz)(1+\zeta)}-\frac{(21\eta - 2\eta_1 - 21)/24}{(1-uz)^3 (1-\eta)^2}.
\end{multline}
Observe that another approach to prove the last equality is to use the structure given by~\Cref{thm:mainRooted}, and compute sufficiently many terms of the expression of $F_1$ (for example by iterating the Tutte equation~\eqref{eq:Tutte}) to identify all undetermined coefficients appearing in the finite sum~\eqref{eq:mainRooted}.

We now note that all the steps performed to go from \Cref{thm:mainRooted} to \Cref{cor:LgAlmost} are valid when $g=1$, and are computationally effective. Therefore, using the explicit expression of $F_1$ given 
above,
 these steps can be followed, and the expression of $L_1$ obtained explicitly. These computations are automatic (and better performed with a computer algebra system) so we do not print them here.
\end{proof}


\section{Final comments}
\label{sec:comments}

We conclude this paper with several  comments.

First, as explained in the introduction, we have only used two basic ideas from the topological recursion of~\cite{EO}. It may be the case that other features of the latter can be applied to bipartite maps. This may not give stronger structural results than the ones we prove here, but it may provide a different way of performing the "unrooting" step performed in \Cref{sec:unrooting}, similar 
to~\cite[Sec. III-4.2]{Eynard:book}. However the proof we gave has the nice advantage of providing a partly combinatorial explanation on the absence of logarithms in genus $g>1$. More generally, it seems that understanding the link between the disymmetry argument we used here and statements such as~\cite[Thm III~4.2]{Eynard:book} is an interesting question from the viewpoint of the topological recursion itself.

Our next comment is about computational efficiency. While it is tempting to use \Cref{thm:toprec} to compute the explicit expression of $F_g$ (and then $L_g$), it is much easier to simply compute the first few terms of $F_g$ (and $L_g$) using recursively the Tutte equation~\eqref{eq:Tutte}, and then determine the unknown coefficients in ~\eqref{eq:mainUnrooted} or~\eqref{eq:mainRooted} by solving a linear system (recall that~\eqref{eq:mainUnrooted} and~\eqref{eq:mainRooted} are \emph{finite} sums, so there are indeed finitely many coefficients to determine).

 Third, structure results similar to \Cref{thm:mainRooted} for the generating functions $F_g^{(m)}(x_1,x_2,\dots, x_m)$ of bipartite maps of genus $g$ carrying $m\geq 1$ marked faces whose sizes are recorded in the exponents of variables $x_1,x_2,\dots,x_m$, are easily derived from our results. Indeed this series is obtained by applying $m$ times to $L_g$ the rooting operator $\Gamma$, one time in each variable. More precisely:
$$
F_g^{(m)}(x_1,x_2,\dots, x_m)
=\Gamma_1\Gamma_2\dots\Gamma_m L_g,
$$
where $\Gamma_i = \sum_{k\geq 1} k x_i^k \frac{\partial}{\partial p_k}$. Since the action of $\Gamma_i$ is fully described by \Cref{prop:Gamma-on-Greek-expr} (up to replacing $s$ by $s_i=\frac{1-u_iz}{1+u_iz}$, where $u_i = x_i (1+zu_i)^2$), the series $F_g^{(m)}(x_1,x_2,\dots, x_m)$ are easily computable rational functions in the Greek variables and the $(1\pm u_iz)$. 
We observe as well that, by substituting all the $p_i$ to zero in the series 
$F_g^{(m)}(x_1,x_2,\dots, x_m)$, one obtains the generating function of bipartite maps having exactly $m$ faces, where the $x_i$ control the face degrees. Therefore these functions have a nice structure as well, being polynomials in the $1/(1\pm u_iz)$ with rational coefficients.  
This special case also follows from the results of~\cite{KZ} 

 Finally, it is natural to investigate further links between our results and those in \cite{GJV, GGPN}.
One such link is provided by the topological recursion, which is related to all of them, but it seems that even stronger analogies hold between these models.
For example, it is tempting to look for a general model encapsulating all these results. This is the subject of a work in progress.

\bibliographystyle{alpha}
\bibliography{biblio}

\begin{thebibliography}{GGPN13}

\bibitem[BC91]{BC}
E.~A. Bender and E.~R. Canfield.
\newblock The number of rooted maps on an orientable surface.
\newblock {\em J. Combin. Theory Ser. B}, 53(2):293--299, 1991.

\bibitem[BC94]{BC:planar}
E.~A. Bender and E.~R. Canfield.
\newblock The number of degree-restricted rooted maps on the sphere.
\newblock {\em SIAM J. Discrete Math.}, 7(1):9--15, 1994.

\bibitem[BLL98]{BergeronLabelleLeroux}
F.~Bergeron, G.~Labelle, and P.~Leroux.
\newblock {\em Combinatorial species and tree-like structures}, volume~67 of
  {\em Encyclopedia of Mathematics and its Applications}.
\newblock Cambridge University Press, Cambridge, 1998.
\newblock Translated from the 1994 French original by Margaret Readdy, With a
  foreword by Gian-Carlo Rota.

\bibitem[BM06]{MBM:icm}
M.~Bousquet-M{\'e}lou.
\newblock Rational and algebraic series in combinatorial enumeration.
\newblock In {\em International {C}ongress of {M}athematicians. {V}ol. {III}},
  pages 789--826. Eur. Math. Soc., Z\"urich, 2006.

\bibitem[CC15]{CC}
S.~R. {Carrell} and G.~{Chapuy}.
\newblock {Simple recurrence formulas to count maps on orientable surfaces}.
\newblock {\em Journal of Combinatorial Theory, Series A}, 133:58--75, 2015.

\bibitem[CF14]{ColletFusy}
G.~Collet and {\'E}.~Fusy.
\newblock A simple formula for the series of constellations and
  quasi-constellations with boundaries.
\newblock {\em Electron. J. Combin.}, 21(2):Paper 2.9, 27, 2014.

\bibitem[CFKS08]{CFKS}
G.~Chapuy, {\'E.}~Fusy, M.~Kang, and B.~Shoilekova.
\newblock A complete grammar for decomposing a family of graphs into
  3-connected components.
\newblock {\em Electron. J. Combin.}, 15(1):Research Paper 148, 39, 2008.

\bibitem[Cha09]{Chapuy:constellations}
G.~Chapuy.
\newblock Asymptotic enumeration of constellations and related families of maps
  on orientable surfaces.
\newblock {\em Combin. Probab. Comput.}, 18(4):477--516, 2009.

\bibitem[ELSV01]{ELSV}
T.~Ekedahl, S.~Lando, M.~Shapiro, and A.~Vainshtein.
\newblock Hurwitz numbers and intersections on moduli spaces of curves.
\newblock {\em Invent. Math.}, 146(2):297--327, 2001.

\bibitem[EO09]{EO}
B.~Eynard and N.~Orantin.
\newblock Topological recursion in enumerative geometry and random matrices.
\newblock {\em J. Phys. A}, 42(29):293001, 117, 2009.

\bibitem[Eyn]{Eynard:book}
B.~Eynard.
\newblock {\em Counting Surfaces}.
\newblock Book in preparation, available on the author's webpage.

\bibitem[Gao93]{Gao}
Z.~Gao.
\newblock The number of degree restricted maps on general surfaces.
\newblock {\em Discrete Math.}, 123(1-3):47--63, 1993.

\bibitem[GGPN13]{GGPN}
I.~P. Goulden, M.~Guay-Paquet, and J.~Novak.
\newblock Polynomiality of monotone {H}urwitz numbers in higher genera.
\newblock {\em Adv. Math.}, 238:1--23, 2013.

\bibitem[GJ08]{GJ:KP}
I.~P. Goulden and D.~M. Jackson.
\newblock The {KP} hierarchy, branched covers, and triangulations.
\newblock {\em Adv. Math.}, 219(3):932--951, 2008.

\bibitem[GJV01]{GJV}
I.~P. Goulden, D.~M. Jackson, and R.~Vakil.
\newblock The {G}romov-{W}itten potential of a point, {H}urwitz numbers, and
  {H}odge integrals.
\newblock {\em Proc. London Math. Soc. (3)}, 83(3):563--581, 2001.

\bibitem[KZ14]{KZ}
M.~{Kazarian} and P.~{Zograf}.
\newblock {Virasoro constraints and topological recursion for Grothendieck's
  dessin counting}.
\newblock {\em ArXiv e-prints}, June 2014.

\bibitem[LZ04]{LandoZvonkine}
S.~K. Lando and A.~K. Zvonkin.
\newblock {\em Graphs on surfaces and their applications}, volume 141 of {\em
  Encyclopaedia of Mathematical Sciences}.
\newblock Springer-Verlag, 2004.
\newblock Appendix by D. B. Zagier.

\bibitem[Sch]{Schaeffer:survey}
G.~Schaeffer.
\newblock Chapter for a handbook of enumerative combinatorics.
\newblock {\em in preparation}.

\bibitem[Tut63]{Tutte:census}
W.~T. Tutte.
\newblock A census of planar maps.
\newblock {\em Canad. J. Math.}, 15:249--271, 1963.

\end{thebibliography}
\label{sec:biblio}



\end{document}